\theoremstyle{plain}
\newtheorem{theorem}{Theorem}[section]
\newtheorem{lemma}[theorem]{Lemma}
\newtheorem{corollary}[theorem]{Corollary}
\newtheorem{proposition}[theorem]{Proposition}
\newtheorem{claim}{Claim}
\newtheorem*{claim*}{Claim}
\theoremstyle{definition}
\newtheorem{definition}[theorem]{Definition}
\newtheorem{remark}[theorem]{Remark}
\newtheorem{question}[theorem]{Question}
\newcommand{\betrag}[1]{\vert{#1}\vert}
\newcommand{\crit}[1]{{{\rm{crit}}\left({#1}\right)}}
\newcommand{\cof}[1]{{{\rm{cof}}(#1)}}
\newcommand{\otp}[1]{{{\rm{otp}}\left(#1\right)}}
\newcommand{\ran}[1]{{{\rm{ran}}(#1)}}
\newcommand{\clo}[1]{{{\rm{Cl}}_{#1}}}
\newcommand{\POT}[1]{{\mathcal{P}}({#1})}
\newcommand{\POTI}[2]{{\mathcal{P}}_{{#2}}({#1})}
\newcommand{\map}[3]{{#1}:{#2}\longrightarrow{#3}}
\newcommand{\Set}[2]{\{{#1}~\vert~{#2}\}}
\newcommand{\seq}[2]{\langle{#1}~\vert~{#2}\rangle}
\newcommand{\goedel}[2]{{\prec}{#1},{#2}{\succ}}
\newcommand{\anf}[1]{{\text{``}\hspace{0.3ex}{#1}\hspace{0.3ex}\text{''}}}
\newcommand{\Poti}[2]{{\mathcal{P}}_{#2}(#1)}
\newcommand{\HH}[1]{{\rm{H}}(#1)}
\newcommand{\Ult}[2]{{\mathrm{Ult}}({#1},{#2})}
\newcommand{\id}{{\rm{id}}}
\newcommand{\Lim}{{\rm{Lim}}}
\newcommand{\On}{{\rm{On}}}
\newcommand{\LL}{{\rm{L}}}
\newcommand{\ZFC}{{\rm{ZFC}}}
\newcommand{\PFA}{{\rm{PFA}}}
\newcommand{\ISP}{{\rm{ISP}}}
\newcommand{\SSP}{{\rm{SSP}}}
\newcommand{\PPP}{{\mathbb{P}}}
\newcommand{\VV}{{\rm{V}}}
\newcommand{\calL}{\mathcal{L}}
\newcommand{\calU}{\mathcal{U}}
\title{Small Embedding Characterizations for Large Cardinals}
\author{Peter Holy}
\address{Mathematisches Institut\\Rheinische Friedrich-Wilhelms-Universit\"at Bonn\\En\-de\-nicher Allee 60\\53115 Bonn\\Germany}
\email{pholy@math.uni-bonn.de}
\author{Philipp L\"ucke}
\address{Mathematisches Institut\\Rheinische Friedrich-Wilhelms-Universit\"at Bonn\\En\-de\-nicher Allee 60\\53115 Bonn\\Germany}
\email{pluecke@math.uni-bonn.de}
\author{Ana Njegomir}
\address{Mathematisches Institut\\Rheinische Friedrich-Wilhelms-Universit\"at Bonn\\En\-de\-nicher Allee 60\\53115 Bonn\\Germany}
\email{njegomir@math.uni-bonn.de}
\thanks{During the preparation of this paper, the first two authors were partially supported by DFG-grant LU2020/1-1.}
\subjclass[2010]{03E55, 03E05, 03E35} 
\keywords{Large Cardinals, elementary embeddings, generalized tree properties}
\begin{document}

\begin{abstract}
  We show that many large cardinal notions can be characterized in terms of the existence of certain elementary embeddings between transitive set-sized structures, that map their critical point to the large cardinal in question. In particular, we provide such embedding characterizations also for several large cardinal notions for which no embedding characterizations have been known so far, namely for subtle, for ineffable, and for $\lambda$-ineffable cardinals. As an application, which we will study in detail in a subsequent paper, we present the basic idea of our concept of internal large cardinals. We provide the definition of certain kinds of internally subtle, internally $\lambda$-ineffable and internally supercompact cardinals, and show that these correspond to generalized tree properties, that were investigated by Wei\ss\ in his \cite{weissthesis} and \cite{MR2959668}, and by  Viale and  Wei\ss\ in \cite{MR2838054}. In particular, this yields new proofs of Wei\ss 's results from \cite{weissthesis} and \cite{MR2959668}, eliminating problems contained in the original proofs.
\end{abstract}

\maketitle



\section{Introduction}

Many large cardinal notions are characterized by the existence of non-trivial elementary embeddings with certain properties. There are two kinds of such characterizations, the first, more common one, where the large cardinal property of $\kappa$ is characterized by the existence of elementary embeddings with critical point $\kappa$, and the second, less common one, where the large cardinal property of $\kappa$ is characterized by the existence of elementary embeddings which map their critical point to $\kappa$. We denote characterizations of the latter kind as \emph{small embedding characterizations}. The following classical result of Menachem Magidor is the first example of a characterization of the second kind. Throughout this paper, we call an elementary embedding $\map{j}{M}{N}$ between transitive classes \emph{non-trivial} if there is an ordinal $\alpha\in M$ with $j(\alpha)>\alpha$. In this case, we let $\crit{j}$ denote the least such ordinal.

\begin{theorem}[{\cite[Theorem 1]{MR0295904}}]\label{theorem:MagidorChar}
  A cardinal $\kappa$ is supercompact if and only if for every $\eta>\kappa$, there is a non-trivial elementary embedding $\map{j}{\VV_\alpha}{\VV_\eta}$ with $\alpha<\kappa$ and $j(\crit{j})=\kappa$.
\end{theorem}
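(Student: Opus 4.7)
The plan is to prove both directions separately.

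For the forward direction, fix $\eta>\kappa$ and use the $|\VV_\eta|$-supercompactness of $\kappa$ to obtain an elementary embedding $\map{\pi}{\VV}{M}$ with $\crit{\pi}=\kappa$, $\pi(\kappa)>\eta$, and $M$ closed under $|\VV_\eta|$-sequences. This closure ensures that $\VV_\eta\subseteq M$, so $\VV_\eta^M=\VV_\eta$ and $\pi\upharpoonright\VV_\eta\in M$. The restriction $\pi\upharpoonright\VV_\eta$ is then an elementary embedding from $\VV_\eta$ into $\VV_{\pi(\eta)}^M=\pi(\VV_\eta)$ with critical point $\kappa$ mapped to $\pi(\kappa)$. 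Hence $M$ satisfies that there exist $\alpha<\pi(\kappa)$ and an elementary embedding $\map{\bar\jmath}{\VV_\alpha}{\VV_{\pi(\eta)}}$ with $\bar\jmath(\crit{\bar\jmath})=\pi(\kappa)$, witnessed by $\alpha=\eta$ and $\bar\jmath=\pi\upharpoonright\VV_\eta$. Reflecting this existential statement along the elementarity of $\pi$ yields the desired small embedding $\map{j}{\VV_\alpha}{\VV_\eta}$ in $\VV$.

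For the converse, I fix $\lambda\geq\kappa$ and aim to produce a normal, fine, $\kappa$-complete ultrafilter on $\Poti{\lambda}{\kappa}$. Pick $\eta$ large, say a strong limit with $\eta>2^\lambda$, so that $\VV_\eta$ contains $\Poti{\lambda}{\kappa}$ together with its full powerset, and apply the hypothesis to obtain an embedding $\map{j}{\VV_\alpha}{\VV_\eta}$ with $\alpha<\kappa$, $\crit{j}=\bar\kappa$, and $j(\bar\kappa)=\kappa$. Since $\lambda>\alpha$, the ordinal $\lambda$ lies outside $\dom{j}$ and the usual seed $j[\lambda]$ is not directly available. To substitute for it, form the Skolem hull $H=\mathrm{Hull}^{\VV_\eta}(\ran{j}\cup\lambda)$, of size $\lambda$, and take its Mostowski collapse to obtain a transitive structure $\bar H$ together with an elementary map $\map{\sigma}{\bar H}{\VV_\eta}$ that is the identity on ordinals $\leq\lambda$. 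Combined with the factor embedding $\map{\sigma^{-1}\circ j}{\VV_\alpha}{\bar H}$ (again sending $\bar\kappa$ to $\kappa$), this produces enough structure inside a transitive set to play the role of the seed in a standard derived-measure construction, and one defines $U$ accordingly.

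The main obstacle is in the converse direction, specifically the construction and verification of $U$. The absence of $\lambda$ from $\dom{j}$ breaks the standard derived-measure recipe, and the substitute construction through hulls and collapses has to be delicate enough to make normality work, i.e.\ to ensure closure of $U$ under diagonal intersections of $\lambda$-many measure-one sets. This requires a careful interplay between the elementarity of $j$ (which transfers statements about $\bar\kappa$ in $\VV_\alpha$ to statements about $\kappa$ in $\VV_\eta$) and the elementarity of the collapse map. Once normality is established, fineness and $\kappa$-completeness of $U$ will follow routinely from the identification $j(\bar\kappa)=\kappa$ and from $\crit{j}=\bar\kappa<\kappa$.
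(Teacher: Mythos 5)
Your forward direction is correct and is indeed Magidor's argument: use a $\betrag{\VV_\eta}$-supercompactness embedding $\pi\colon\VV\to M$, observe $\pi\restriction\VV_\eta\in M$ and that $M$ thinks $\pi\restriction\VV_\eta\colon\VV_\eta\to\VV_{\pi(\eta)}^M$ is an embedding of the required kind, then pull this existential statement back through $\pi$.

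The converse, however, has a genuine gap, and you essentially acknowledge it yourself. Forming the hull $H=\mathrm{Hull}^{\VV_\eta}(\ran{j}\cup\lambda)$ and collapsing does not restore the missing seed: the collapse $\bar H$ only sees the trace $\mathcal{P}_\kappa\lambda\cap H$ rather than the actual $\mathcal{P}_\kappa\lambda$, so a derived ultrafilter defined inside $\bar H$ would measure the wrong Boolean algebra, and you never give a definition of $U$ as a subset of $\mathcal{P}(\mathcal{P}_\kappa\lambda)$ at all. The missing idea in Magidor's proof is to \emph{not} aim directly at a measure on $\mathcal{P}_\kappa\lambda$. Instead, argue by contradiction: suppose $\kappa$ is not supercompact and let $\lambda$ be the \emph{least} cardinal $\geq\kappa$ such that $\kappa$ is not $\lambda$-supercompact. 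Choose $\eta>\lambda$ a limit so large that $\VV_\eta$ correctly computes $\lambda'$-supercompactness for all relevant cardinals. Then $\lambda$ is definable in $\VV_\eta$ from the single parameter $\kappa$, and since $\kappa\in\ran{j}$ and $\ran{j}\prec\VV_\eta$, it follows that $\lambda\in\ran{j}$. Now $\bar\lambda=j^{-1}(\lambda)<\alpha<\kappa$ lies inside $\dom{j}$, so the ordinary derived measure $U=\Set{X\subseteq\Poti{\bar\lambda}{\bar\kappa}}{j[\bar\lambda]\in j(X)}$ (where $\bar\kappa=\crit{j}$) makes sense and is a normal, fine, $\bar\kappa$-complete ultrafilter on $\Poti{\bar\lambda}{\bar\kappa}$. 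Since $\alpha$ is a limit and $\bar\lambda<\alpha$, one has $U\in\VV_\alpha$, and $\VV_\alpha$ is correct about statements quantifying over $\mathcal{P}(\Poti{\bar\lambda}{\bar\kappa})$; hence $\VV_\alpha\models\anf{\bar\kappa\text{ is }\bar\lambda\text{-supercompact}}$, contradicting what elementarity of $j$ forces $\VV_\alpha$ to believe about the minimality of $\bar\lambda$. The ``least counterexample lies in the range'' step is the crux, and it is what your hull construction fails to supply.
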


Other examples of large cardinal properties that are characterized by the existence of small embeddings are \emph{subcompactness} (introduced by Ronald Jensen) and its generalizations (see \cite{MR3096624}), and also Ralf Schindler's \emph{remarkable cardinals} (see \cite{MR1765054}). In this paper, we will study large cardinal properties that can be characterized by small embeddings of the following form.

\begin{definition}\label{definition:smallembedding}
 Given cardinals $\kappa<\theta$, we say that a non-trivial elementary embedding $\map{j}{M}{\HH{\theta}}$ is a \emph{small embedding for $\kappa$} if $M\in\HH{\theta}$ is transitive, and $j(\crit{j})=\kappa$ holds.
\end{definition}

The properties of cardinals $\kappa$ studied in this paper usually state that for sufficiently large\footnote{Here, $\theta$ being a \emph{sufficiently large cardinal} means that there is an $\alpha\geq\kappa$ such that the corresponding statement holds for all cardinals $\theta>\alpha$.} cardinals $\theta$, there is a small embedding $\map{j}{M}{\HH{\theta}}$ for $\kappa$ with certain elements of $\HH{\theta}$ in its range, and with the property that the domain model $M$ satisfies certain correctness properties with respect to the universe of sets $\VV$, \footnote{We make this requirement mostly to avoid trivial small embedding characterizations. For example, without this requirement, one could propose the following equivalence: $\kappa$ is measurable if and only if there is a transitive $M$ and $\map{j}{M}{\HH{(2^\kappa)^+}}$ such that $j(\crit j)=\kappa$ and $\crit j$ is measurable in $M$. However $\crit j$ will in general not be measurable in $\VV$ (consider for example the least measurable cardinal $\kappa$), hence this trivial characterization is ruled out by the above requirement. We will later present a non-trivial small embedding characterization of measurability (see Lemma \ref{lemma:MeasureableSmallChar}).} sometimes in combination with some kind of smallness assumption about $M$.

The results of this paper will show that many classical large cardinal properties can be characterized in this way. For example, the proof of Theorem \ref{theorem:MagidorChar} directly yields the following small embedding characterization of supercompactness. Note that the requirement that $M=\HH{\delta}$ below can easily be interpreted as a correctness property of $M$ (since $\VV=\HH{\On}$), and that $\delta<\kappa$ is a smallness assumption on $M$.

\begin{corollary}\label{corollary:SupercompactMagidorStyle}
  The following statements are equivalent for every cardinal $\kappa$: 
 \begin{enumerate}[leftmargin=0.7cm]
  \item $\kappa$ is supercompact. 
  
  \item For all sufficiently large cardinals $\theta$, there is a small embedding $\map{j}{M}{\HH{\theta}}$ for $\kappa$ with the property that $M=\HH{\delta}$ for some cardinal $\delta<\kappa$. \qed
 \end{enumerate}
\end{corollary}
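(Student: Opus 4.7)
The plan is to derive Corollary~\ref{corollary:SupercompactMagidorStyle} as a routine translation of Theorem~\ref{theorem:MagidorChar} between the rank hierarchy and the $\HH{\delta}$-hierarchy, noting that the two hierarchies coincide when $\delta$ is a strong limit cardinal.

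For the direction (i)~$\Rightarrow$~(ii), I would argue directly using a supercompactness embedding, bypassing Theorem~\ref{theorem:MagidorChar}. Fix a cardinal $\theta\geq\kappa$, and pick an embedding $\map{j}{\VV}{M}$ witnessing the supercompactness of $\kappa$ with $j(\kappa)>\theta$ and sufficient closure (say ${}^{2^\theta}M\subseteq M$) to ensure $\HH{\theta}^\VV=\HH{\theta}^M$ and $j\restriction\HH{\theta}\in M$. Inside $M$, the restriction $j\restriction\HH{\theta}$ is then a non-trivial elementary embedding from $\HH{\theta}$ into $\HH{j(\theta)}^M$ with critical point $\kappa$ mapped to $j(\kappa)$. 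Hence $M$ satisfies the existential statement \emph{``there exist a cardinal $\delta<j(\kappa)$ and a non-trivial elementary embedding $\map{\bar j}{\HH{\delta}}{\HH{j(\theta)}}$ with $\bar j(\crit{\bar j})=j(\kappa)$''}, and pulling this back to $\VV$ via the elementarity of $j$ yields a small embedding into $\HH{\theta}$ for $\kappa$ whose domain is $\HH{\delta}$ for some cardinal $\delta<\kappa$.

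For the direction (ii)~$\Rightarrow$~(i), I would use (ii) to verify the cofinal variant of Theorem~\ref{theorem:MagidorChar}, which still implies supercompactness since the normal fine measure on $\POTI{\lambda}{\kappa}$ is extracted from any embedding of the given form with target rank above $\lambda$. Given a cardinal $\lambda\geq\kappa$, pick a strong limit cardinal $\theta>\lambda$ to which (ii) applies, and obtain $\map{j}{\HH{\delta}}{\HH{\theta}}$ with $\delta<\kappa$ and $j(\crit{j})=\kappa$. Since $\theta$ is a strong limit, $\HH{\theta}=\VV_\theta$ satisfies the first-order schema \emph{``$\POT{\mu}$ exists for every cardinal $\mu$''}, which transfers via the elementarity of $j$ to $\HH{\delta}$, forcing $\delta$ to be a strong limit and $\HH{\delta}=\VV_\delta$. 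Pick any ordinal $\bar\alpha$ with $\crit{j}<\bar\alpha<\delta$; a standard absoluteness argument for truth in rank-initial segments then shows that $\map{j\restriction\VV_{\bar\alpha}}{\VV_{\bar\alpha}}{\VV_{j(\bar\alpha)}}$ is a non-trivial elementary embedding whose critical point is mapped to $\kappa$, and since $\theta$ and consequently $j(\bar\alpha)$ can be taken arbitrarily large, this produces the embeddings needed for the cofinal Magidor criterion.

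The main obstacle I expect is verifying that $\delta$ inherits the strong-limit property from $\theta$ via the elementarity of $j$, so that $\HH{\delta}=\VV_\delta$ and the restrictions $j\restriction\VV_{\bar\alpha}$ make sense. The point is that strong-limit-ness of $\delta$, although not a statement about $\delta$ as a member of $\HH{\delta}$, is captured by the internal schema asserting the existence of $\POT{\mu}$ for each cardinal $\mu$, which pulls back from $\HH{\theta}$ without needing $\delta$ itself as a parameter.
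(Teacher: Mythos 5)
The direction (i)~$\Rightarrow$~(ii) is fine, and is essentially the forward direction of Magidor's proof transplanted to the $\HH{\cdot}$-hierarchy.

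The direction (ii)~$\Rightarrow$~(i) has a genuine gap, centered on the clause \emph{``since $\theta$ and consequently $j(\bar\alpha)$ can be taken arbitrarily large''}. This is not justified and, as far as I can see, is not true: $j(\bar\alpha)$ is bounded by $\sup j[\delta]$, which is an ordinal of cofinality $\leq\delta<\kappa$ that you have no control over. Enlarging $\theta$ hands you a \emph{different} embedding $j$ with a \emph{different} domain $\HH{\delta}$, but nothing in the hypothesis tells you that $\sup j[\delta]$ grows with $\theta$. Your restriction to $\VV_{\bar\alpha}$ is in fact an unnecessary detour: once you know $\HH{\delta}=\VV_\delta$ and $\HH{\theta}=\VV_\theta$, the map $j$ itself, viewed as $\map{j}{\VV_\delta}{\VV_\theta}$, is already of Magidor's form with $\delta<\kappa$ and target $\VV_\theta$, so restricting only loses control of the target rank.

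Even with that repair, the remaining claim that the \emph{cofinal} variant of Magidor's criterion yields supercompactness is stated too glibly. From $\map{j}{\VV_\delta}{\VV_\theta}$ one does not get a normal fine measure on $\POTI{\lambda}{\kappa}$ merely because $\theta>\lambda$: the natural derived ultrafilter lives on $\POTI{\bar\lambda}{\crit j}$ for some $\bar\lambda\in\dom{j}$, and one then pushes it up via $j$; this requires an ordinal $\bar\lambda\in\dom{j}$ with $j(\bar\lambda)\geq\lambda$, i.e.\ it requires $\sup j[\delta]>\lambda$, which is exactly the point in question. The standard way around this (and what Magidor's ``for every $\eta$'' buys you) is to make $\lambda$ definable in the target from the parameter $\kappa=j(\crit j)$, for instance by working with the least $\lambda$ for which $\lambda$-supercompactness fails; alternatively, in the framework of this paper, one invokes Lemma~\ref{lemma:addxtorange} (which is downstream of this corollary) to put $\lambda$ into $\ran{j}$. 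Your write-up does neither, so the step from ``embeddings into $\VV_\theta$ for cofinally many strong limits $\theta$'' to ``supercompact'' is missing precisely the part that makes this nontrivial. Note also that once $\lambda=j(\bar\lambda)$ is in the range, the key reason the argument closes is that $\HH{\delta}=\VV_\delta$ is closed under subsets of its elements, so the derived ultrafilter on $\POTI{\bar\lambda}{\crit j}$ actually lies in the domain model; this is where the hypothesis $M=\HH{\delta}$, as opposed to $M$ being an arbitrary transitive set, is used, and it deserves to be said explicitly.
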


In addition, our results will show that the collections of small embeddings witnessing certain large cardinal properties relate in a way that parallels the implication structure of the corresponding large cardinals notions, that is whenever there is a direct implication from some large cardinal property $A$ to another large cardinal property $B$, then amongst 
the small embeddings witnessing $A$, we find small embeddings witnessing $B$.  For example, we will later show that a cardinal $\kappa$ is inaccessible if and only if for all sufficiently large cardinals $\theta$, there is a small embedding $\map{j}{M}{\HH{\theta}}$ for $\kappa$ with the property that $\crit{j}$ is a strong limit cardinal (see Corollary \ref{corollary:SmallCardsSmallChar}). Hence every small embedding witnessing the supercompactness of a cardinal $\kappa$ with respect to  some sufficiently large cardinal $\theta$ as in Corollary \ref{corollary:SupercompactMagidorStyle} also witnesses the inaccessibility of $\kappa$ with respect to $\theta$.

We will now summarize the contents of our paper. 
In Section \ref{section:smallembeddingcharacterizations}, we will present small embedding characterizations for what we call \emph{Mahlo-like cardinals}, that is notions of large cardinals that are characterized as being stationary limits of certain cardinals, in particular covering the cases of inaccessible and of Mahlo cardinals. 
Section \ref{section:twolemmas} contains  two technical lemmas that will be useful later on. 
In Section \ref{section:indescribables}, we provide small embedding characterizations for \emph{$\Pi^m_n$-indescribable cardinals} for all $0<m,n<\omega$. 
The results of Section \ref{section:subtleineffable} provide such characterizations for \emph{subtle}, for \emph{ineffable}, and for \emph{$\lambda$-ineffable cardinals}. According to Victoria Gitman \cite{MR2830415}, no embedding characterizations of any kind were known so far for these large cardinal notions.
Moreover, these characterizations suggest some variations, and so we introduce the related large cardinal concepts of \emph{supersubtle} and of \emph{$\lambda$-superineffable cardinals}, strengthening the notions of subtle and $\lambda$-ineffable cardinals. We also use the small embedding characterizations of $\lambda$-ineffability and of $\lambda$-superineffability to provide new characterizations of supercompactness. 
In Section \ref{section:filterbased}, we provide small embedding characterizations for various filter based large cardinal notions, that is for \emph{measurable}, for \emph{$\lambda$-supercompact}, and for \emph{$n$-huge cardinals}. 
Section \ref{section:internal} contains a brief introduction to the concept of \emph{internal large cardinals}, which uses small embedding characterizations to describe properties of large cardinals that accessible cardinals can consistently possess. The theory of internal large cardinals will be fully developed in the subsequent paper \cite{hl}. 
In Section \ref{section:internal}, we introduce the internal version of supercompactness with respect to the $\omega_1$-approximation property and use results of Matteo Viale and Christoph Wei{\ss}  to show that this concept is equivalent to a generalized tree property studied in their \cite{MR2838054}.  
We introduce the corresponding internal versions of subtle and of $\lambda$-ineffable cardinals in Section \ref{section:intsubtleineffable}.   
In Section \ref{section:application}, we discuss some problems arising in the consistency proofs of certain generalized tree properties that are presented in \cite{weissthesis} and \cite{MR2959668}. Finally, in Section \ref{section:ConsInternal}, we make use of our concept of internal large cardinals to provide new proofs for these consistency statements, and eliminate the problems discussed in the previous section. We close the paper with some open questions in Section \ref{section:openquestions}.


\section{Mahlo-like cardinals}\label{section:smallembeddingcharacterizations}

In this section, we provide small embedding characterizations for what we call \emph{Mahlo-like} cardinals, that is notions of large cardinals that are characterized as being stationary limits of certain kinds of cardinals.  The following lemma will directly yield these characterizations.

\begin{lemma}\label{lemma:CharStatLimit}
 Given an $\calL_\in$-formula $\varphi(v_0,v_1)$, the following statements are equivalent for every cardinal $\kappa$ and every set $x$: 
 \begin{enumerate}[leftmargin=0.7cm]
  \item $\kappa$ is a regular uncountable cardinal and the set of all ordinals $\lambda<\kappa$ such that $\varphi(\lambda,x)$ holds is stationary in $\kappa$. 
 
  \item For all sufficiently large cardinals $\theta$, there is a small embedding $\map{j}{M}{\HH{\theta}}$ for $\kappa$ with $\varphi(\crit{j},x)$ and $x\in\ran{j}$. 
  
 \end{enumerate}
\end{lemma}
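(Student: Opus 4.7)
For (i) $\Rightarrow$ (ii), the plan is a standard L\"owenheim--Skolem and Mostowski collapse construction. Taking $\theta$ sufficiently large (in particular $\theta > \kappa$ regular with $x \in \HH{\theta}$), the set of ordinals $\lambda < \kappa$ arising as $X \cap \kappa$ for some elementary substructure $X \prec \HH{\theta}$ with $\{x,\kappa\} \subseteq X$ and $|X| < \kappa$ is a club in $\kappa$, using that $\kappa$ is regular uncountable and a closure-under-Skolem-functions argument. Intersecting this club with the stationary set $S := \{\lambda < \kappa : \varphi(\lambda, x)\}$ produces some $\lambda \in S$ with witnessing submodel $X$. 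Taking $\pi : X \to M$ to be the transitive collapse and setting $j := \pi^{-1}$ then yields the required small embedding: $M \in \HH{\theta}$ is transitive, $\crit{j} = \lambda$, $j(\crit{j}) = \kappa$, $\varphi(\crit{j}, x)$ holds, and $x \in \ran{j}$.

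For (ii) $\Rightarrow$ (i), I first show $\kappa$ is a regular uncountable cardinal. A key observation is that for any small embedding $j : M \to \HH{\theta}$ with $j(\crit{j}) = \kappa$, the set $\ran{j} \cap \kappa$ equals the ordinal $\crit{j}$: clearly $\crit{j} \subseteq \ran{j}$ since $j$ fixes ordinals below $\crit{j}$, and any $\gamma \in \ran{j} \cap [\crit{j}, \kappa)$ would satisfy $\pi(\gamma) > \crit{j}$ in $M$ and hence $\gamma = j(\pi(\gamma)) > \kappa$ by order-preservation, a contradiction. If $\kappa = \omega$, every natural number is definable in $\HH{\theta}$ and hence fixed by $j$, incompatible with $\crit{j} < \kappa = \omega$. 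If $\kappa$ were not a cardinal in $V$, a Skolem-captured bijection $|\kappa| \to \kappa$ in $\HH{\theta}$ would force $\kappa \subseteq \ran{j}$, contradicting $\ran{j} \cap \kappa = \crit{j} < \kappa$. The singular case is analogous via a Skolem-captured cofinal map $\cof{\kappa} \to \kappa$.

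For the stationarity of $S$, fix an arbitrary club $C \subseteq \kappa$ and take $\theta$ large enough that $C \in \HH{\theta}$ and (ii) applies at $\theta$. The strategy is to secure a small embedding $j : M \to \HH{\theta}$ satisfying both $\varphi(\crit{j}, x)$ and $C \in \ran{j}$: then the transitive collapse yields $\bar C := j^{-1}(C)$ with $\bar C \cap \crit{j} = C \cap \crit{j}$ (since $j$ fixes ordinals below $\crit{j}$), and by elementarity $M$ sees $\bar C$ as a club in $\crit{j}$, forcing $\crit{j}$ to be a limit point of $C$ and hence $\crit{j} \in C$ by closedness; combined with $\crit{j} \in S$ coming from $\varphi(\crit{j}, x)$, this gives the desired witness in $S \cap C$. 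The main obstacle is producing this upgraded embedding from (ii), which a priori only guarantees $x \in \ran{j}$: the plan is to revisit the L\"owenheim--Skolem construction used in (i) $\Rightarrow$ (ii), now with $C$ added as an additional parameter of the elementary submodel, and to use (ii) together with the regularity of $\kappa$ established above to verify that the resulting sub-club of admissible critical points still meets the stationary set $S$.
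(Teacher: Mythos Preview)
Your forward direction (i) $\Rightarrow$ (ii) and your argument that $\kappa$ is regular and uncountable are fine and essentially match the paper's proof (modulo presentation). The issue is the stationarity half of (ii) $\Rightarrow$ (i).

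You correctly identify the obstacle: hypothesis (ii) only puts $x$ in $\ran{j}$, not the arbitrary club $C$. But your proposed fix is circular. Rerunning the L\"owenheim--Skolem construction from (i) $\Rightarrow$ (ii) with $C$ as an extra parameter produces a club $D'\subseteq\kappa$ of ``admissible critical points,'' and you then say you will ``use (ii) \ldots\ to verify that the resulting sub-club \ldots\ still meets the stationary set $S$.'' But the stationarity of $S$ is exactly the conclusion you are trying to establish; in the forward direction that construction only succeeded because $S$ was \emph{assumed} stationary. Hypothesis (ii) supplies, for each large $\theta$, a single embedding $j$ with $\crit{j}\in S$ and $x\in\ran{j}$; nothing forces this $\crit{j}$ to lie in your $D'$, and nothing lets you trade $x$ for $\langle x,C\rangle$.

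The paper's argument avoids the need for $C\in\ran{j}$ altogether. Assume toward a contradiction that some club $C$ misses $S$. Pick $\theta$ large enough that $\varphi$ is absolute between $\HH{\theta}$ and $\VV$, and take $j$ as in (ii) with $j(y)=x$. The \emph{existence} of a club in $\kappa$ on which $\neg\varphi(\cdot,x)$ holds is a first-order statement about $\kappa$ and $x$ in $\HH{\theta}$; by elementarity, $M$ sees a club $D\subseteq\crit{j}$ with $\neg\varphi(\lambda,y)$ for all $\lambda\in D$. Then $j(D)$ is a club in $\kappa$ on which $\neg\varphi(\cdot,x)$ holds, and since $j$ fixes ordinals below $\crit{j}$, $\crit{j}$ is a limit point of $j(D)$, hence $\crit{j}\in j(D)$ --- contradicting $\varphi(\crit{j},x)$. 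The point is that you do not need \emph{your} club $C$ in the range; you only need \emph{some} club, obtained by reflecting the existential statement down to $M$ and pushing it back up.
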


\begin{proof}
   First, assume that (i) holds, and pick a cardinal $\theta>\kappa$ with $x\in\HH{\theta}$. Let $\seq{X_\alpha}{\alpha<\kappa}$ be a continuous and increasing sequence of elementary substructures of $\HH{\theta}$ of cardinality less than $\kappa$ with $x\in X_0$ and $\alpha\subseteq X_\alpha\cap\kappa\in\kappa$ for all $\alpha<\kappa$. By (i), there is an $\alpha<\kappa$ such that $\alpha=X_\alpha\cap\kappa$ and $\varphi(\alpha,x)$ holds. Let $\map{\pi}{X_\alpha}{M}$ denote the corresponding transitive collapse. Then $\map{\pi^{{-}1}}{M}{\HH{\theta}}$ is a small embedding for $\kappa$ with $\varphi(\crit{\pi^{{-}1}},x)$ and $x\in\ran{\pi^{{-}1}}$.

 Now, assume that (ii) holds. Then there is a cardinal $\theta>\kappa$ such that the formula $\varphi$ is absolute between $\HH{\theta}$ and $\VV$, and there is a small embedding $\map{j}{M}{\HH{\theta}}$ for $\kappa$ with the property that $\varphi(\crit{j},x)$ holds and there is a $y\in M$ with $x=j(y)$. Then $\kappa$ is uncountable, because elementarity implies that ${j\restriction(\omega+1)}=\id_{\omega+1}$. Next, assume that $\kappa$ is singular. Then $\crit{j}$ is singular in $M$ and there is a cofinal function $\map{c}{\cof{\crit{j}}^M}{\crit{j}}$ in $M$. In this situation, elementarity implies that $j(c)=c$ is cofinal in $\kappa$, a contradiction. Finally, assume that there is a club $C$ in $\kappa$ such that $\neg\varphi(\lambda,x)$ holds for all $\lambda\in C$. Then elementarity and our choice of $\theta$ imply that, in $M$, there is a club $D$ in $\crit{j}$ such that $\neg\varphi(\lambda,y)$ holds for all $\lambda\in D$. Again, by elementarity and our choice of $\theta$, we know that $j(D)$ is a 
club in $\kappa$ with the property that $\neg\varphi(\lambda,x)$ holds for all $\lambda\in j(D)$. But elementarity also implies that $\crit{j}$ is a limit point of $j(D)$ and therefore $\crit{j}$ is an element of $j(D)$ with $\varphi(\crit{j},x)$, a contradiction.   
\end{proof}

By varying the formula $\varphi$, we can use the above to characterize some of the smallest notions of large cardinals.\footnote{In our below applications, we will not make use of the parameter $x$, i.e.\ $x=\emptyset$.} In fact, we start by showing that we can also characterize regular uncountable cardinals in such a way. Each of the characterizations below is based on a correctness property.\footnote{Note that in general, the characterizations provided by Lemma \ref{lemma:CharStatLimit} are not necessarily correctness properties, as it may not be the case that $M\models\varphi(\crit j,x)$. For example, consider the characterization of a stationary limit of measurable cardinals.}

\begin{corollary}\label{corollary:SmallCardsSmallChar}
 Let $\kappa$ be a cardinal. 
 \begin{enumerate}[leftmargin=0.7cm]
  \item $\kappa$ is uncountable and regular if and only if for all sufficiently large cardinals $\theta$, there is a small embedding $\map{j}{M}{\HH{\theta}}$ for $\kappa$. 
  
  \item $\kappa$ is weakly inaccessible if and only if for all sufficiently large cardinals $\theta$, there is a small embedding $\map{j}{M}{\HH{\theta}}$ for $\kappa$ with the property that $\crit{j}$ is a cardinal. 
  
  \item  $\kappa$ is inaccessible if and only if for all sufficiently large cardinals $\theta$, there is a small embedding $\map{j}{M}{\HH{\theta}}$ for $\kappa$ with the property that $\crit{j}$ is a strong limit cardinal. 

  \item $\kappa$ is weakly Mahlo if and only if for all sufficiently large cardinals $\theta$, there is a small embedding $\map{j}{M}{\HH{\theta}}$ for $\kappa$ with the property that $\crit{j}$ is a regular cardinal. 
  
  \item $\kappa$ is Mahlo if and only if for all sufficiently large cardinals $\theta$, there is a small embedding $\map{j}{M}{\HH{\theta}}$ for $\kappa$ with the property that $\crit{j}$ is an inaccessible cardinal. \qed
 \end{enumerate}
\end{corollary}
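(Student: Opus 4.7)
\emph{Plan.} My strategy is to derive all five equivalences as direct applications of Lemma \ref{lemma:CharStatLimit}, taking $x=\emptyset$ and choosing the $\calL_\in$-formula $\varphi(v_0,v_1)$ appropriately in each case. For each item, the only things to check are (a) that $\varphi$ is absolute between $\HH{\theta}$ and $\VV$ for all sufficiently large cardinals $\theta$ (which is tacitly used in the backwards direction of the proof of Lemma \ref{lemma:CharStatLimit}), and (b) that the resulting stationarity statement matches the stated large cardinal notion.

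For (i), I take $\varphi(v_0,v_1)$ to be the tautology $v_0=v_0$; the set of $\lambda<\kappa$ satisfying $\varphi(\lambda,\emptyset)$ is then $\kappa$ itself, which is stationary in $\kappa$ if and only if $\kappa$ is an uncountable regular cardinal, and the conclusion is immediate. For (ii), (iv) and (v), I take $\varphi(v_0,v_1)$ to express that $v_0$ is, respectively, a cardinal, a regular cardinal, or an inaccessible cardinal. Each such property of an ordinal $\lambda<\kappa$ is absolute between $\HH{\theta}$ and $\VV$ once $\theta$ is larger than $2^\lambda$, and the stationarity statements produced by Lemma \ref{lemma:CharStatLimit} are, by definition or immediate unpacking, equivalent to $\kappa$ being weakly inaccessible (regular uncountable limit cardinal), weakly Mahlo, and Mahlo, respectively. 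For (v), I use the standard fact that a regular uncountable cardinal with stationarily many inaccessibles below it is itself inaccessible.

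For (iii), I take $\varphi(v_0,v_1)$ to express that $v_0$ is a strong limit cardinal; this is absolute in the same range as above. Here I use the small additional observation that a regular uncountable cardinal $\kappa$ which has stationarily (equivalently, cofinally) many strong limit cardinals below it is itself a strong limit: for any $\alpha<\kappa$, pick a strong limit cardinal $\lambda$ with $\alpha<\lambda<\kappa$; then $2^\alpha<\lambda<\kappa$. Combined with regularity, this gives inaccessibility, and the converse direction is clear.

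I do not foresee a real obstacle; the corollary is essentially a uniform packaging of five instances of Lemma \ref{lemma:CharStatLimit}, and the only bookkeeping is the verification of absoluteness for the chosen formulas and the standard equivalence between each large cardinal property and the corresponding stationarity statement below $\kappa$.
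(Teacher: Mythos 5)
Your proposal is correct and matches the paper's intended approach: the corollary is stated with no proof (only a \qed), signaling that it is a routine application of Lemma \ref{lemma:CharStatLimit} with the evident choices of $\varphi$, exactly as you describe. Your verifications of the absoluteness of the chosen formulas and of the standard equivalences between each large cardinal notion and the corresponding stationarity statement (in particular, that a regular uncountable cardinal with stationarily many strong limits, respectively inaccessibles, below it is itself a strong limit) are the only bookkeeping needed, and you carry them out correctly.
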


\begin{remark}\label{singleremark}
  In many cases, and in particular in each of the above cases, the large cardinal properties in question can also be characterized by the existence of a single elementary embedding. For each of the above, it suffices to require the existence of a single appropriate small embedding $\map{j}{M}{\HH{\kappa^+}}$, as can easily be seen from the proof of Lemma \ref{lemma:CharStatLimit}. For example, a cardinal $\kappa$ is inaccessible if and only if there is a small embedding $\map{j}{M}{\HH{\kappa^+}}$ for $\kappa$ with the property that $\crit{j}$ is a strong limit cardinal. This will in fact be the case for many of the small embedding characterizations that will follow, however we will not make any further mention of this.
\end{remark}

Note that Lemma \ref{lemma:CharStatLimit} implies that small embedding characterizations as in its statement (ii) cannot characterize any notion of large cardinal that implies weak compactness, for weakly compact cardinals satisfy stationary reflection, so for any weakly compact cardinal satisfying (ii), there is in fact a smaller cardinal that satisfies (ii) as well. In the remainder of this paper, we will however provide small embedding characterizations of a different form for many large cardinal notions that imply weak compactness, and in particular also for weak compactness itself.


\section{Two lemmas}\label{section:twolemmas}

Before we continue with further small embedding characterizations, we need to interrupt for the sake of presenting two technical lemmas that will be of use in many places throughout the rest of the paper.

\begin{lemma}\label{lemma:ModelsContainInitialSegment}
 The following statements are equivalent for every small embedding $\map{j}{M}{\HH{\theta}}$ for a cardinal $\kappa$:
 \begin{enumerate}[leftmargin=0.7cm]
  \item $\kappa$ is a strong limit cardinal. 
  
  \item $\crit{j}$ is a strong limit cardinal. 
  
  \item $\crit{j}$ is a cardinal and $\HH{\crit{j}}\subseteq M$. 
  
 \end{enumerate}
\end{lemma}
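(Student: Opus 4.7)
The plan is to prove (i) $\Leftrightarrow$ (ii) first and then (ii) $\Leftrightarrow$ (iii), leveraging a common toolbox of elementarity arguments.

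The central observation is that $j\restriction\crit{j}=\id$, so $j$ fixes every subset $A\subseteq\alpha$ (with $\alpha<\crit{j}$) pointwise, and hence as a set. Together with elementarity and the choice of $\theta$ large enough that $H(\theta)$ computes the powerset of any $\alpha<\kappa$ correctly, this yields $j[\POT{\alpha}^M]=\POT{\alpha}^M$ while $j(\POT{\alpha}^M)=\POT{\alpha}^{H(\theta)}=\POT{\alpha}^V$. I will invoke the standard fact that for $X\in M$, $j[X]=j(X)$ iff $|X|^M<\crit{j}$. By elementarity, $(2^\alpha)^V=j((2^\alpha)^M)$ for every $\alpha<\crit{j}$.

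For (i) $\Leftrightarrow$ (ii): Elementarity gives $M\models\crit{j}$ strong limit iff $H(\theta)\models\kappa$ strong limit iff $\kappa$ strong limit in $V$. So it remains to compare $M$'s and $V$'s views of $\crit{j}$ being strong limit. For each $\alpha<\crit{j}$: if $(2^\alpha)^M<\crit{j}$ then $j$ fixes it, yielding $(2^\alpha)^V=j((2^\alpha)^M)=(2^\alpha)^M<\crit{j}$; conversely, if $(2^\alpha)^V<\crit{j}$, then the chain $(2^\alpha)^M\ge(2^\alpha)^V=j((2^\alpha)^M)\ge(2^\alpha)^M$ (using that $M$-cardinality dominates $V$-cardinality and $j$ is non-decreasing on ordinals) collapses to equality, giving $(2^\alpha)^M<\crit{j}$.

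For (ii) $\Rightarrow$ (iii): Under (ii), $|\POT{\alpha}^M|^M<\crit{j}$ for every $\alpha<\crit{j}$, so the standard fact applied to $X=\POT{\alpha}^M$ gives $\POT{\alpha}^M=j[\POT{\alpha}^M]=j(\POT{\alpha}^M)=\POT{\alpha}^V$, hence $\POT{\alpha}^V\subseteq M$. Any $x\in H(\crit{j})^V$ is coded by a pair $(\lambda, E)$ with $\lambda<\crit{j}$ and $E\subseteq\lambda\times\lambda$ an extensional well-founded relation whose Mostowski collapse (at a designated base point) returns $x$; since $E\in M$ and $M$, being elementarily equivalent to $H(\theta)$, satisfies enough of $\ZFC$ to perform Mostowski collapses, the absoluteness of the collapse yields $x\in M$.

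For (iii) $\Rightarrow$ (ii): Under (iii), the inclusion $H(\crit{j})^V\subseteq M$ directly gives $\POT{\alpha}^V\subseteq M$ for each $\alpha<\crit{j}$, hence $\POT{\alpha}^M=\POT{\alpha}^V$. This equality means $j[\POT{\alpha}^M]=\POT{\alpha}^M=\POT{\alpha}^V=j(\POT{\alpha}^M)$, so the standard fact forces $|\POT{\alpha}^M|^M<\crit{j}$, i.e.\ $(2^\alpha)^M<\crit{j}$. Thus $M\models\crit{j}$ strong limit, and the already established equivalence (i) $\Leftrightarrow$ (ii) closes the circle. The main obstacle is verifying the backward direction of the standard fact "$j[X]=j(X)$ iff $|X|^M<\crit{j}$" in the precise form used: the $\Leftarrow$ direction is immediate by enumerating $X$ via an $M$-bijection from an ordinal $<\crit{j}$, while the $\Rightarrow$ direction requires carefully comparing the $V$- and $M$-cardinalities of $X$ and $j(X)$ to exclude anomalous fixed-point values of $|X|^M$ lying above $\kappa$.
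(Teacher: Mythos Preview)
Your overall strategy is sound and quite close to the paper's, but two points deserve correction.

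First, in the direction (ii) $\Rightarrow$ ``$M\models\crit{j}$ is strong limit'' you write the chain $(2^\alpha)^M\ge(2^\alpha)^V=j((2^\alpha)^M)\ge(2^\alpha)^M$. The first inequality is not justified: ``$M$-cardinality dominates $V$-cardinality'' gives $|\POT{\alpha}^M|^M\ge|\POT{\alpha}^M|^V$, but $(2^\alpha)^V$ is the $V$-cardinality of the possibly larger set $\POT{\alpha}^V$, so you cannot compare these. Fortunately the conclusion needs only the other two links: $j$ is non-decreasing on ordinals, so $(2^\alpha)^M\le j((2^\alpha)^M)=(2^\alpha)^V<\crit{j}$ already suffices.

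Second, and more substantially, your (iii) $\Rightarrow$ (ii) rests entirely on the reverse implication of the ``standard fact'' $j[X]=j(X)\Rightarrow|X|^M<\crit{j}$, which you explicitly defer. This is precisely the heart of the matter, and your description (``exclude anomalous fixed-point values of $|X|^M$ lying above $\kappa$'') does not indicate how to prove it. The argument is short and is essentially what the paper does for (iii) $\Rightarrow$ (i): if $|X|^M\ge\crit{j}$, pick an injection $\iota:\crit{j}\to X$ in $M$; then $j(\iota)(\crit{j})\in j(X)=j[X]$, so $j(\iota)(\crit{j})=j(x)$ for some $x\in X\subseteq M$. Since $j(x)\in\ran(j(\iota))$ in $\HH{\theta}$, elementarity gives $x\in\ran(\iota)$, say $x=\iota(\gamma)$ with $\gamma<\crit{j}$; but then $j(\iota)(\gamma)=j(\iota)(\crit{j})$ contradicts injectivity. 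You should also note that $\POT{\alpha}$ is indeed a set in $M$ here: since $\POT{\alpha}^V\subseteq M\in\HH{\theta}$, we get $2^\alpha<\theta$, so $\POT{\alpha}\in\HH{\theta}$ and by elementarity $\POT{\alpha}^M\in M$.

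For comparison, the paper organises things slightly differently: it proves (i) $\Leftrightarrow$ (ii), then (i) $\Rightarrow$ (iii) via an explicit enumeration $s:\crit{j}\to\HH{\crit{j}}^M$ satisfying $\HH{\delta}=s[\delta]$ for strong limits $\delta$ (your Mostowski-coding route is a perfectly good alternative), and finally (iii) $\Rightarrow$ (i) by the injection argument just sketched, applied directly to $X=\POT{\nu}$ for the least bad $\nu$.
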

\begin{proof}
 Assume that (i) holds and pick a cardinal $\nu<\crit{j}$. Since $\crit{j}$ is a strong limit cardinal in $M$, we have $(2^\nu)^M<\crit{j}$. But then $$2^\nu ~ = ~ j((2^\nu)^M) ~ = ~ (2^\nu)^M ~ < ~ \crit{j}$$ and this shows that (ii) holds.  In the other direction, assume (i) fails. By elementarity, there is a cardinal $\nu<\crit{j}$ and an injection of $\crit{j}$ into $\POT{\nu}$ in $M$. Then this injection witnesses that (ii) fails. 
 
 Now, again assume that (i) holds. Then elementarity implies that, in $M$, there is a bijection $\map{s}{\crit{j}}{\HH{\crit{j}}}$ with the property that $\HH{\delta}=s[\delta]$ holds for every strong limit cardinal $\delta<\crit{j}$. 
 Since we already know that (i) implies (ii), we have $\HH{\crit{j}}=j(s)[\crit{j}]$. Fix $x\in\HH{\crit{j}}$ and $\alpha<\crit{j}$ with $j(s)(\alpha)=x$. Since $\crit{j}$ is a strong limit cardinal in $M$, we have ${j\restriction\HH{\crit{j}}^M}=\id_{\HH{\crit{j}}^M}$ and this allows us to conclude that  $x=j(s)(\alpha)=j(s(\alpha))=s(\alpha)\in M$, and hence that (iii) holds. 

Finally, assume for a contradiction that (iii) holds and (i) fails. Then, by elementarity, there is a minimal cardinal $\nu<\crit{j}$ such that either $(2^\nu)^M\geq\crit{j}$ or such that $\POT{\nu}$ does not exist in $M$. By (iii), $\POT{\nu}\subseteq M$. By elementarity, we may pick an injection $\map{\iota}{\crit{j}}{\POT{\nu}}$ in $M$. Define $x=j(\iota)(\crit{j})\in\POT{\nu}\subseteq M$. Then $j(x)=x$, and elementarity yields an ordinal $\gamma<\crit{j}$ with $\iota(\gamma)=x$. But then $j(\iota)(\gamma)=x=j(\iota)(\crit{j})$, contradicting the injectivity of $\iota$.   
\end{proof}

Next, we isolate a certain type of correctness property of small embeddings, for which we will obtain a self-strengthening property in Lemma \ref{lemma:addxtorange} below.

\begin{definition}
 Let $\Phi(v_0,v_1)$ be an $\calL_\in$-formula and let $x$ be a set. We say that the pair $(\Phi,x)$ is \emph{downwards-absolute} if for every cardinal $\kappa$, there is an ordinal $\alpha$ such that $\Phi({j\restriction\HH{\nu}^M},x)$ holds for every small embedding $\map{j}{M}{\HH{\theta}}$ for $\kappa$ with $\Phi(j,x)$ and with $x\in\ran{j}$, and every $\nu>\crit j$ where $\nu$ is a cardinal in $M$ and $j(\nu)>\alpha$.  
\end{definition}

Note that all of the small embedding characterizations provided by Corollary \ref{corollary:SmallCardsSmallChar} use correctness properties which are downwards-absolute, and moreover, all but one of the small embedding characterizations that we are going to derive in the remainder of this paper will use downwards-absolute correctness properties, the exception being the case of subtle cardinals, which are based on what does not seem to be expressible as a correctness property, however we will still have downwards-absoluteness in that case. The verification of downwards-absoluteness will be trivial in each case, and is thus left for the interested reader to check throughout. The following claim and lemma will show why we are interested in the above form of downwards-absoluteness.

\begin{claim}\label{lemma:addxtorangeaux}
 Let $(\Phi,x)$ be downwards-absolute and assume that $\kappa$ is a cardinal with the property that for sufficiently large cardinals $\theta$, there is a small embedding $\map{j}{M}{\HH{\theta}}$ for $\kappa$ with $\Phi(j,x)$ and $x\in\ran{j}$. Then for all sets $z$ and sufficiently large cardinals $\theta$, there is a small embedding $\map{j}{M}{\HH{\theta}}$ for $\kappa$ with $\Phi(j,x)$ and $z\in\ran{j}$. 
\end{claim}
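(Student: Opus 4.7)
The plan is to apply the hypothesis at a target $\theta^{\ast}$ much larger than $\theta$, then use the elementarity of the resulting embedding together with the downwards-absoluteness of $(\Phi,x)$ to descend to a small embedding into $\HH{\theta}$ with $z$ in its range.

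Fix a set $z$ and let $\alpha$ be the ordinal provided by the downwards-absoluteness of $(\Phi,x)$ at $\kappa$. Take $\theta$ to be a sufficiently large cardinal with $z\in\HH{\theta}$ and $\theta>\alpha$, and pick $\theta^{\ast}\gg\theta$ that is also sufficiently large for the hypothesis. Apply the hypothesis at $\theta^{\ast}$ to obtain a small embedding $\map{j^{\ast}}{M^{\ast}}{\HH{\theta^{\ast}}}$ for $\kappa$ with $\Phi(j^{\ast},x)$ and $x\in\ran{j^{\ast}}$. By downwards-absoluteness, for each cardinal $\nu\in M^{\ast}$ with $\crit{j^{\ast}}<\nu$ and $j^{\ast}(\nu)>\alpha$, the restriction $j^{\ast}\restriction\HH{\nu}^{M^{\ast}}:\HH{\nu}^{M^{\ast}}\to\HH{j^{\ast}(\nu)}$ is itself a small embedding for $\kappa$ satisfying $\Phi$, which gives us a flexible supply of candidate embeddings sitting ``below'' $j^{\ast}$.

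To get $z$ into the range, the idea is to transfer the hypothesis through $j^{\ast}$ into $M^{\ast}$: since $\HH{\theta^{\ast}}$ witnesses (a relativized version of) the hypothesis for all cardinals $\theta'$ in the interval $(\theta_{0},\theta^{\ast})$, the elementarity of $j^{\ast}$ implies that inside $M^{\ast}$ the ordinal $\crit{j^{\ast}}$ satisfies the corresponding statement with parameter $(j^{\ast})^{-1}(x)$. Consequently inside $M^{\ast}$ we obtain small embeddings $\bar{j}:\bar{M}\to\HH{\bar{\theta}}^{M^{\ast}}$ for $\crit{j^{\ast}}$ with $\Phi(\bar{j},(j^{\ast})^{-1}(x))$ for any sufficiently large $\bar{\theta}\in M^{\ast}$; pushing such $\bar{j}$ forward by $j^{\ast}$ yields a small embedding in $V$ for $\kappa$ satisfying $\Phi(\cdot,x)$, which by one final application of downwards-absoluteness can be restricted to an embedding into $\HH{\theta}$. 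The game is to pick $\bar{j}$ inside $M^{\ast}$ so that, after push-forward, $z$ lies in its range.

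The main obstacle is precisely this last selection, because $z$ need not belong to $\ran{j^{\ast}}$, in which case there is no obvious $(j^{\ast})^{-1}(z)$ to target inside $M^{\ast}$. I expect the resolution to exploit that the transferred hypothesis inside $M^{\ast}$ is itself existential in its witnesses, so that an appropriate $\bar{j}$ can be chosen whose $j^{\ast}$-image absorbs $z$ through the elementary structure of $\HH{\theta^{\ast}}$; the downwards-absoluteness of $(\Phi,x)$ is what ensures $\Phi$ is preserved across the transfer into $M^{\ast}$, the push-forward by $j^{\ast}$, and the final restriction down to $\HH{\theta}$.
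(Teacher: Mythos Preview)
Your proposal is not a proof: you correctly identify the obstacle---that $z$ need not lie in $\ran{j^{\ast}}$---and then only \emph{expect} a resolution without supplying one. The expectation is wrong. Any embedding you manufacture from inside $M^{\ast}$, whether a restriction $j^{\ast}\restriction\HH{\nu}^{M^{\ast}}$ or a push-forward $j^{\ast}(\bar{j})$ of some $\bar{j}\in M^{\ast}$, is determined by data in $M^{\ast}$ together with $j^{\ast}$; you have no handle inside $M^{\ast}$ on an external $z\notin\ran{j^{\ast}}$, so there is no way to force such a $z$ into the range of the resulting embedding. The phrase ``absorbs $z$ through the elementary structure of $\HH{\theta^{\ast}}$'' has no content here.

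The missing idea---and the one the paper uses---is to argue by contradiction. Suppose the conclusion fails. Take $\theta$ large enough that $\HH{\theta}$ is sufficiently correct and fix $\map{j}{M}{\HH{\theta}}$ with $\Phi(j,x)$ and $x=j(y)$. Since the failure of the conclusion and the downwards-absoluteness of $(\Phi,x)$ are both visible in $\HH{\theta}$, elementarity pulls them back to $M$: there exist $\beta,\vartheta,z_0\in M$ such that, in $M$, $\vartheta>\beta$ is a cardinal with $y,z_0\in\HH{\vartheta}$ and there is no small embedding $\map{k}{N}{\HH{\vartheta}}$ for $\crit{j}$ with $\Phi(k,y)$ and $z_0\in\ran{k}$, while the downwards-absoluteness bound for $\crit{j}$ is $\beta$. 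Pushing this out, there is no small embedding into $\HH{j(\vartheta)}$ for $\kappa$ with $\Phi(\cdot,x)$ and $j(z_0)$ in its range. But $j\restriction\HH{\vartheta}^M$ is exactly such an embedding: $z_0\in\HH{\vartheta}^M$ gives $j(z_0)\in\ran{j\restriction\HH{\vartheta}^M}$, and $\Phi(j\restriction\HH{\vartheta}^M,x)$ holds by the (reflected) downwards-absoluteness since $j(\vartheta)>j(\beta)$. Contradiction.

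The point is that you never try to hit a pre-specified external $z$; instead you reflect a failure witness into $M$, where it becomes an element $z_0$ you can act on, and then $j$ itself (suitably restricted) defeats the reflected failure at $j(z_0)$.
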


\begin{proof}
 By our assumptions, there is an ordinal $\alpha>\kappa$ such that the following statements hold: 
 \begin{enumerate}
  \item For all cardinals $\theta>\alpha$, there is a small embedding $\map{j}{M}{\HH{\theta}}$ for $\kappa$ with $\Phi(j,x)$ and $x\in\ran{j}$. 
  
  \item If $\map{j}{M}{\HH{\theta}}$ is a small embedding for $\kappa$ such that $\Phi(j,x)$ holds, $\nu>\crit j$ is a cardinal in $M$, $x\in\ran{j}$ and $j(\nu)>\alpha$, then $\Phi({j\restriction\HH{\nu}^M},x)$ holds. 
 \end{enumerate}
 
 Assume for a contradiction that the conclusion of the lemma does not hold. Pick a strong limit cardinal $\theta>\alpha$ with the property that $\HH{\theta}$ is sufficiently absolute in $\VV$ and fix a small embedding $\map{j}{M}{\HH{\theta}}$ for $\kappa$ with the property that $\Phi(j,x)$ holds, and fix $y\in M$ with $j(y)=x$. In this situation, our assumptions, the absoluteness of $\HH{\theta}$ in $\VV$ and the elementarity of $j$ imply that there are $\beta,\vartheta,z\in M$ such that the following statements hold in $M$:
 \begin{enumerate}
  \item[(a)] If $\map{k}{N}{\HH{\eta}}$ is a small embedding for $\crit{j}$ such that $\Phi(k,y)$ holds, $\nu>\crit k$ is a cardinal in $N$, $y\in\ran{k}$ and $k(\nu)>\beta$, then $\Phi({k\restriction\HH{\nu}^N},y)$ holds. 
  \item[(b)] $\vartheta>\beta$ is a cardinal with $y,z\in\HH{\vartheta}$ and there is no small embedding $\map{k}{N}{\HH{\vartheta}}$ for $\crit{j}$ with $\Phi(k,y)$ and $z\in\ran{k}$. 
 \end{enumerate}
 
 By elementarity and our absoluteness assumptions on $\HH{\theta}$, the above implies that the following statements hold in $\VV$: 
 \begin{enumerate}
  \item[$(\rm a)^\prime$] If $\map{k}{N}{\HH{\eta}}$ is a small embedding for $\kappa$ and $\crit{k}<\nu\in N$ is a cardinal in $N$ such that $\Phi(k,x)$ holds, $x\in\ran{k}$ and $k(\nu)>j(\beta)$, then $\Phi({k\restriction\HH{\nu}^N},x)$ holds. 
  
  \item[$(\rm b)^\prime$] $j(\vartheta)>j(\beta)$ is a cardinal with $x,j(z)\in\HH{j(\vartheta)}$ and there is no small embedding $\map{k}{N}{\HH{j(\vartheta)}}$ for $\kappa$ with $\Phi(k,x)$ and $j(z)\in\ran{k}$.  
 \end{enumerate}
 
 Since $j(\vartheta)>j(\beta)$, we can apply the statement $(\rm a)^\prime$ to $\map{j}{M}{\HH{\theta}}$ and $\vartheta$ to conclude that $\Phi(j\restriction\HH{\vartheta}^M,x)$ holds in $\VV$. But we also have $j(z)\in\ran{j\restriction\HH{\vartheta}^M}$ and together these statements contradict $(\rm b)^\prime$. 
\end{proof}

We now show that the above lemma implies a somewhat stronger statement that essentially  allows us to switch the quantifiers on $z$ and on $\theta$ in the statement of the lemma.

\begin{lemma}\label{lemma:addxtorange}
 Let $(\Phi,x)$ be downwards-absolute and assume that $\kappa$ is a cardinal with the property that for sufficiently large cardinals $\theta$, there is a small embedding $\map{j}{M}{\HH{\theta}}$ for $\kappa$ with $\Phi(j,x)$ and $x\in\ran{j}$. Then for all sufficiently large cardinals $\theta$ and for all $z\in\HH{\theta}$, there is a small embedding $\map{j}{M}{\HH{\theta}}$ for $\kappa$ with $\Phi(j,x)$ and $z\in\ran{j}$. 
\end{lemma}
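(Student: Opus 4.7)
The strategy is to apply Claim \ref{lemma:addxtorangeaux} to the compound parameter $\{z,\theta\}$ and then restrict the resulting embedding from $\HH{\theta'}$ down to $\HH{\theta}$, invoking the downwards-absoluteness of $(\Phi,x)$ to preserve $\Phi$ under this restriction. In effect, the per-$z$ threshold provided by the Claim gets converted into a threshold depending only on $\theta$.

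First I would fix a cardinal $\theta$ larger than the ordinal $\alpha$ witnessing the downwards-absoluteness of $(\Phi,x)$ for $\kappa$, and with $\HH{\theta}$ sufficiently absolute in $\VV$. Let $z\in\HH{\theta}$. Apply Claim \ref{lemma:addxtorangeaux} to the parameter $\hat z=\{z,\theta\}$ to obtain, for some sufficiently large cardinal $\theta'$ (with $\HH{\theta'}$ also sufficiently absolute), a small embedding $\map{j'}{M'}{\HH{\theta'}}$ for $\kappa$ with $\Phi(j',x)$, $x\in\ran{j'}$ and $\hat z\in\ran{j'}$. Let $\tilde x,\tilde z,\tilde\theta\in M'$ denote the respective preimages of $x,z,\theta$.

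Next I would consider the restriction $k:=j'\restriction\HH{\tilde\theta}^{M'}$. Since $j'(\tilde\theta)=\theta$ and $\HH{\theta}^{\HH{\theta'}}=\HH{\theta}$ by absoluteness, $j'$ maps $\HH{\tilde\theta}^{M'}$ into $\HH{\theta}$, and a standard elementarity-plus-absoluteness argument shows that $\map{k}{\HH{\tilde\theta}^{M'}}{\HH{\theta}}$ is elementary with $\crit{k}=\crit{j'}$ and $k(\crit{k})=\kappa$. From $z=j'(\tilde z)\in\HH{\theta}=j'(\HH{\tilde\theta}^{M'})$ and elementarity I obtain $\tilde z\in\HH{\tilde\theta}^{M'}$, hence $z=k(\tilde z)\in\ran{k}$. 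Applying the downwards-absoluteness of $(\Phi,x)$ to $j'$ at the $M'$-cardinal $\tilde\theta$, noting $\tilde\theta>\crit{j'}$ (since $j'(\tilde\theta)=\theta>\kappa=j'(\crit{j'})$) and $j'(\tilde\theta)=\theta>\alpha$, I conclude $\Phi(k,x)$, so that $k$ is the desired small embedding for $\kappa$.

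The main technical point to watch, inherited from the proof of Claim \ref{lemma:addxtorangeaux}, is verifying that $\HH{\tilde\theta}^{M'}\in\HH{\theta}$, so that $k$ really is a small embedding in the sense of Definition \ref{definition:smallembedding}. This is a cardinality bookkeeping step, handled by ensuring --- through a L\"owenheim--Skolem style choice in the construction of $j'$ --- that $|M'|^{\VV}<\theta$; since $\HH{\tilde\theta}^{M'}$ is transitive and contained in $M'$, this then bounds $|\HH{\tilde\theta}^{M'}|^{\VV}$ below $\theta$.
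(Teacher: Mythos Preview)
Your approach matches the paper's exactly: use Claim \ref{lemma:addxtorangeaux} to put $z$ and $\theta$ (and, as you correctly note, $x$) into the range of some $\map{j'}{M'}{\HH{\theta'}}$, then restrict $j'$ to $\HH{\tilde\theta}^{M'}$ and invoke downwards-absoluteness of $(\Phi,x)$. The paper's write-up is terser---it does not spell out the need for $x\in\ran{j'}$ or the appeal to downwards-absoluteness---so your added detail is an improvement in exposition.

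The problematic point is your final paragraph. You are right that $\HH{\tilde\theta}^{M'}\in\HH{\theta}$ requires verification (the paper simply asserts that the restriction is a small embedding without addressing this), but your proposed fix does not go through as written. The embedding $j'$ is produced by Claim \ref{lemma:addxtorangeaux}, whose proof is by contradiction; there is no explicit ``construction of $j'$'' into which a L\"owenheim--Skolem step can be inserted to force $\betrag{M'}<\theta$. Nor can you shrink $M'$ after the fact to a small elementary substructure and collapse, since $\Phi$ is a predicate of the embedding itself rather than merely of its range, and downwards-absoluteness only promises preservation under restrictions of the specific shape $j'\restriction\HH{\nu}^{M'}$, not under passage to arbitrary elementary substructures.
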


\begin{proof}
  Fix a sufficiently large cardinal $ \theta$ and some $z\in\HH{\theta}$. By Claim \ref{lemma:addxtorangeaux}, there is a cardinal $\theta'$ and a small embedding $\map{j'}{M'}{\HH{\theta'}}$ for $\kappa$ with $\Phi(j',x)$ and $z,\theta\in\ran{j'}$. Let $j$ be the restriction of $j'$ to $M=\HH{(j')^{-1}(\theta)}^{M'}$. Then $\map{j}{M}{\HH{\theta}}$ is a small embedding for $\kappa$ with $\Phi(j,x)$ and $z\in\ran{j}$.
\end{proof}


\section{Indescribable Cardinals}\label{section:indescribables}

In this section, we provide small embedding characterizations for \emph{indescribable} cardinals. Recall that, given $0<m,n<\omega$, a cardinal $\kappa$ is \emph{$\Pi^m_n$-indescribable} if for every $\Pi^m_n$-formula $\varphi(A_0,\ldots,A_{n-1})$ whose parameters $A_0,\ldots,A_{n-1}$ are subsets of $\VV_\kappa$, the assumption $\VV_\kappa\models\varphi(A_0,\ldots,A_{n-1})$ implies that there is a $\delta<\kappa$ such that $\VV_\delta\models\varphi(A_0\cap\VV_\delta,\ldots,A_{n-1}\cap\VV_\delta)$. Moreover, remember that, given an uncountable cardinal $\kappa$, a transitive set $M$ of cardinality $\kappa$ is a \emph{$\kappa$-model} if $\kappa\in M$, ${}^{{<}\kappa}M\subseteq M$ and $M$ is a model of $\ZFC^-$.  Our small embedding characterizations of indescribable cardinals build on the following embedding characterizations of these cardinals by Kai Hauser (see \cite{MR1133077}).

\begin{theorem}[{\cite[Theorem 1.3]{MR1133077}}]\label{theorem:Hauser}
  The following statements are equivalent for every inaccessible cardinal $\kappa$ and all $0<m,n<\omega$:  

  \begin{enumerate}[leftmargin=0.7cm]
    \item $\kappa$ is $\Pi^m_n$-indescribable.

    \item For every $\kappa$-model $M$,
    there is a transitive set $N$ and an elementary embedding $\map{j}{M}{N}$  with $\crit{j}=\kappa$ such that the following statements hold:
  \begin{enumerate}
   \item $N$ has cardinality $\beth_{m-1}(\kappa)$, ${}^{{<}\kappa}N\subseteq N$ and $j,M\in N$. 

     \item If $m>1$, then ${}^{\beth_{m-2}(\kappa)}N\subseteq N$. 

  \item We have $$\VV_\kappa\models\varphi ~ \Longleftrightarrow ~ (\VV_\kappa\models\varphi)^N$$ for all $\Pi^m_{n-1}$-formulas $\varphi$  whose parameters are contained in $N\cap\VV_{\kappa+m}$. 
  \end{enumerate} 
  \end{enumerate}
\end{theorem}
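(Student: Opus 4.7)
My plan is to prove the two directions separately: (ii) $\Rightarrow$ (i) by reflection through the embedding, and (i) $\Rightarrow$ (ii) by an ultrapower construction.

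For (ii) $\Rightarrow$ (i), suppose $\varphi(\bar A) = \forall X\,\psi(X,\bar A)$ is a $\Pi^m_n$-formula with $\bar A \subseteq \VV_\kappa$ and $\VV_\kappa \models \varphi(\bar A)$, where $\psi$ is $\Sigma^m_{n-1}$. Using inaccessibility of $\kappa$, I would fix a $\kappa$-model $M$ containing $\bar A$ with $\VV_\kappa \subseteq M$, and apply (ii) to obtain $\map{j}{M}{N}$ with $\crit{j} = \kappa$ satisfying (a)--(c). For each $X \in N \cap \VV_{\kappa+m}$, the statement $\VV_\kappa \models \psi(X,\bar A)$ holds in $\VV$ by hypothesis, hence in $N$ by (c) (applied to the $\Pi^m_{n-1}$-negation of $\psi$); since $N$'s $m$-th order objects of $\VV_\kappa$ are exactly the elements of $N \cap \VV_{\kappa+m}$, this yields $N \models (\VV_\kappa \models \varphi(\bar A))$. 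Because $\crit{j} = \kappa < j(\kappa)$ and $j(\bar A) \cap \VV_\kappa = \bar A$, the cardinal $\kappa$ itself witnesses in $N$ that some $\delta < j(\kappa)$ satisfies $\VV_\delta \models \varphi(j(\bar A) \cap \VV_\delta)$. Elementarity of $j$ transfers this to $M$, producing $\delta < \kappa$ with $\VV_\delta^M \models \varphi(\bar A \cap \VV_\delta)$; since $\VV_\kappa \subseteq M$ we have $\VV_\delta^M = \VV_\delta$, and this is the required reflection in $\VV$.

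For (i) $\Rightarrow$ (ii), given a $\kappa$-model $M$, I would construct an $M$-ultrafilter $U$ on an underlying set of cardinality $\beth_{m-1}(\kappa)$ and take $N = \Ult{M}{U}$ with ultrapower embedding $\map{j}{M}{N}$. When $m = 1$, build an $M$-normal, $M$-complete ultrafilter on $\kappa$ by the standard tree-of-approximations argument: enumerate $\POT{\kappa} \cap M$ in order type $\kappa$, and invoke $\Pi^1_n$-indescribability of $\kappa$ to select a branch that decides all enumerated sets consistently, with normality enforced via diagonal intersection constraints. When $m > 1$, generalise this to an $M$-ultrafilter on the family of $m$-th order subsets of $\VV_\kappa$ lying in $M$, whose cardinality is $\beth_{m-1}(\kappa)$; the higher-order indescribability is what supplies the required normality and completeness on these higher-order parameters. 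The cardinality and closure assertions in (a) and (b) then follow from standard ultrapower analysis, using the closure of $M$ together with the strong completeness of $U$.

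The main obstacle will be verifying condition (c), the exact $\Pi^m_{n-1}$-correctness of $\VV_\kappa$ in $N$ for parameters in $N \cap \VV_{\kappa+m}$. Na\"ive {\L}o\'s-style arguments handle only first-order formulas, so the strategy is to represent each parameter $X \in N$ by a canonical function on the measure space and then transfer the $\Pi^m_{n-1}$-theory of $X$ between $\VV$ and the ultrapower by invoking $\Pi^m_n$-indescribability of $\kappa$ in $\VV$. The gap between $\Pi^m_{n-1}$- and $\Pi^m_n$-indescribability supplies exactly the extra outer quantifier required to reflect the truth of each such formula from $\VV$ into $N$, and getting this step right is the crux of the argument.
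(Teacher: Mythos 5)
This theorem is imported into the paper as a citation---it is Theorem~1.3 of Hauser's paper \cite{MR1133077}---and is not proved here. The only commentary the authors add is a one-sentence remark after the statement, noting that the clause $j,M\in N$ in (a) follows from (b) when $m>1$ and can be read off Hauser's proof when $m=1$. There is therefore no in-paper proof against which your attempt can be compared; the appropriate move in the context of this paper is simply to cite Hauser rather than reprove the result.

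Assessed on its own terms, your sketch of (ii)$\Rightarrow$(i) is sound in outline, provided you arrange that $\VV_\kappa\subseteq M$ (ensuring $\VV_\delta^M=\VV_\delta$ for $\delta\le\kappa$ and that $N$ computes $\VV_\kappa$ correctly), which you do say you will do: applying (c) to the $\Pi^m_{n-1}$-negation of the $\Sigma^m_{n-1}$ matrix is the right move, and pulling the resulting existential statement below $j(\kappa)$ back through $j$ is the standard reflection argument. The (i)$\Rightarrow$(ii) direction, however, is a plan rather than a proof, and it contains a concrete error. A $\kappa$-model $M$ has cardinality exactly $\kappa$, so for $m>1$ the family of $m$-th order subsets of $\VV_\kappa$ lying in $M$ has cardinality at most $\kappa$, not $\beth_{m-1}(\kappa)$ as you claim; an ultrapower of $M$ along an index set contained in $M$ cannot by itself yield a target $N$ of cardinality $\beth_{m-1}(\kappa)$ with the closure and containment properties in (a) and (b). Hauser's construction for $m>1$ is genuinely more involved than a plain ultrapower of the $\kappa$-model. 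Beyond this, you yourself flag condition (c)---the exact $\Pi^m_{n-1}$-correctness of $N$---as an unresolved ``main obstacle'' and ``crux''; that is an honest assessment, but it means the key step of (i)$\Rightarrow$(ii) is still missing from your proposal.
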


Note that, in case $m>1$, the statement $j,M\in N$ in (a) is a direct consequence of (b). It is not explicitly mentioned, but easy to observe from the proof given in \cite{MR1133077} that this can also be equivalently required in case $m=1$ (for weakly compact cardinals, this is in fact what became known as their \emph{Hauser characterization}). The above theorem allows us to characterize indescribable cardinals through small embeddings, in two ways.

\begin{lemma}\label{lemma:SmallEmbCharIndescribable}
 Given $0<m,n<\omega$, the following statements are equivalent for every cardinal $\kappa$: 
 \begin{enumerate}[leftmargin=0.7cm]
  \item $\kappa$ is $\Pi^m_n$-indescribable.
  
  \item For all sufficiently large cardinals $\theta$, there is a small embedding $\map{j}{M}{\HH{\theta}}$ for $\kappa$ with the property that 
$$(\VV_{\crit{j}}\models\varphi)^M ~ \Longrightarrow ~ \VV_{\crit{j}}\models\varphi$$ for 
  every $\Pi^m_n$-formula $\varphi$ whose parameters are contained in $M\cap\VV_{\crit{j}+1}$.  

  \item For all sufficiently large cardinals $\theta$ and all $x\in \VV_{\kappa+1}$, there is a small embedding $\map{j}{M}{\HH{\theta}}$ for $\kappa$ with $x\in\ran j$ and with the property that
  $$(\VV_{\crit{j}}\models\varphi)^M ~ \Longrightarrow ~ \VV_{\crit{j}}\models\varphi$$ for 
  every $\Pi^m_n$-formula $\varphi$ using only $j^{{-}1}(x)$ as a parameter.  
  \end{enumerate}
\end{lemma}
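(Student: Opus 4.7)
The plan is to close the equivalence as the cycle (iii) $\Rightarrow$ (i) $\Rightarrow$ (ii) $\Rightarrow$ (iii), where (iii) $\Rightarrow$ (i) will be a direct argument, (i) $\Rightarrow$ (ii) will be built on Hauser's Theorem \ref{theorem:Hauser}, and (ii) $\Rightarrow$ (iii) will use the downwards-absoluteness machinery of Section \ref{section:twolemmas}.

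For (iii) $\Rightarrow$ (i), I would take a $\Pi^m_n$-formula $\varphi$ with parameters $\vec A \subseteq \VV_\kappa$ satisfying $\VV_\kappa \models \varphi(\vec A)$, and apply (iii) with $x$ encoding $\vec A$ and with some $\theta$ large enough that the satisfaction of $\varphi(\vec A)$ over $\VV_\kappa$ is absolute between $\VV$ and $\HH{\theta}$. The resulting small embedding $\map{j}{M}{\HH{\theta}}$ has $\vec A \in \ran{j}$; by elementarity, $(\VV_{\crit{j}} \models \varphi(j^{{-}1}(\vec A)))^M$ holds, and the reflection clause of (iii) promotes this to $\VV_{\crit{j}} \models \varphi(j^{{-}1}(\vec A))$. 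Since $\kappa$ is inaccessible (by (iii) together with Corollary \ref{corollary:SmallCardsSmallChar}), Lemma \ref{lemma:ModelsContainInitialSegment} makes $\crit{j}$ a strong limit, so $j \restriction \VV_{\crit{j}} = \id$ and $j^{{-}1}(\vec A) = \vec A \cap \VV_{\crit{j}}$, giving the reflection witness $\delta = \crit{j} < \kappa$.

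For (i) $\Rightarrow$ (ii), I would apply Hauser's Theorem \ref{theorem:Hauser} to a $\kappa$-model $M^*$ containing the relevant parameters, yielding an elementary embedding $\map{j^*}{M^*}{N^*}$ with $\crit{j^*} = \kappa$, Hauser's closure properties, and the $\Pi^m_{n-1}$-correctness (c) of $N^*$ over $\VV_\kappa$. Working inside $N^*$, the restriction $k = j^* \restriction \HH{\tilde\theta}^{M^*}$ for a suitable cardinal $\tilde\theta \in M^*$ above $\kappa$ serves as a small embedding for $j^*(\kappa)$ from $\HH{\tilde\theta}^{M^*}$ into $\HH{j^*(\tilde\theta)}^{N^*}$. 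The key task is to verify that $k$ enjoys the $\Pi^m_n$-reflection property inside $N^*$, which should follow by combining Hauser's $\Pi^m_{n-1}$-correctness applied to the $\Sigma^m_{n-1}$-body of $\Pi^m_n$-formulas with the closure of $N^*$, which provides the relevant inclusion of iterated power sets of $\VV_\kappa$ between $M^*$ and $N^*$. The elementarity of $j^*$ then reflects the existence of such a $k$ in $N^*$ down to a corresponding small embedding for $\kappa$ in $M^*$, and transitivity of $M^* \subseteq \VV$ delivers the desired small embedding in $\VV$. I expect the main obstacle to lie precisely in this reflection step, since Hauser's correctness is one level below the reflection needed; a secondary concern is matching codomains, but because (ii) only demands any sufficiently large $\theta$, the $\tilde\theta$ produced by the construction can be absorbed into the \emph{sufficiently large} clause.

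For (ii) $\Rightarrow$ (iii), I would verify that the reflection property of (ii), paired with $x = \emptyset$, is downwards-absolute in the sense of Section \ref{section:twolemmas}; this follows because restricting a small embedding to a large enough initial segment $\HH{\nu}^M$ does not alter the interpretation of $(\VV_{\crit{j}} \models \varphi)^M$ nor the set $M \cap \VV_{\crit{j}+1}$ over which parameters are drawn. Lemma \ref{lemma:addxtorange} then delivers, for each $x \in \HH{\theta}$, a small embedding with $x \in \ran{j}$ and the (ii)-reflection, and specializing the universal reflection to the parameter $j^{{-}1}(x) \in M \cap \VV_{\crit{j}+1}$ yields precisely the form required by (iii).
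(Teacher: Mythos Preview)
Your overall plan---the cycle (i) $\Rightarrow$ (ii) $\Rightarrow$ (iii) $\Rightarrow$ (i), with Lemma \ref{lemma:addxtorange} handling (ii) $\Rightarrow$ (iii)---matches the paper, and your treatments of (ii) $\Rightarrow$ (iii) and (iii) $\Rightarrow$ (i) are essentially the paper's.

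The genuine gap is in (i) $\Rightarrow$ (ii). You apply Hauser's theorem directly to an arbitrary $\kappa$-model $M^*$, verify inside $N^*$ that $k=j^*\restriction\HH{\tilde\theta}^{M^*}$ has the reflection property, reflect this down to a small embedding $j'$ living in $M^*$, and then invoke ``transitivity of $M^*\subseteq\VV$'' to finish. Two things go wrong. First, for the reflection step inside $N^*$ you need $(\VV_\kappa\models\varphi)^{M^*}\Rightarrow(\VV_\kappa\models\varphi)^{N^*}$ for $\Pi^m_n$-formulas; writing $\varphi=\forall Y\,\psi(Y)$ with $\psi\in\Sigma^m_{n-1}$, the universal quantifier on the $N^*$-side ranges over the larger set $(\VV_{\kappa+m})^{N^*}$, so knowing $\psi$ only for the $Y$'s visible to $M^*$ is not enough, and Hauser's clause (c) compares $N^*$ with the \emph{true} $\VV_\kappa$, not with $M^*$. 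Second, even granting that step, the embedding $j'$ you obtain in $M^*$ has codomain $\HH{\theta'}^{M^*}$, which need not be a genuine $\HH{\theta''}$, and the reflection conclusion is $(\VV_{\crit{j'}}\models\varphi)^{M^*}$ rather than the true $\VV_{\crit{j'}}\models\varphi$; mere transitivity of $M^*$ repairs neither defect. The paper resolves both issues with one extra layer: it takes $M$ to be the transitive collapse via $\pi$ of an elementary $X\prec\HH{\vartheta}$ of size $\kappa$ with $\theta\in X$ and ${}^{{<}\kappa}X\subseteq X$. Then $(\VV_\kappa\models\varphi)^M\Rightarrow\VV_\kappa\models\varphi$ holds because $X$ is correct about $\VV_{\kappa+m}$, and Hauser's (c) finishes the passage to $N$. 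After reflecting $j'$ into $M$, one applies $\pi^{-1}$, which turns $\HH{\theta_*}^M$ into the genuine $\HH{\theta}$ and upgrades the $M$-relativized reflection clause to the true one. This collapse-and-uncollapse step is precisely the ingredient missing from your sketch.
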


\begin{proof}
 First, assume that (i) holds. Pick a cardinal $\theta>\beth_m(\kappa)$ and a regular cardinal $\vartheta>\theta$ with $\HH{\theta}\in\HH{\vartheta}$. Since $\kappa$ is inaccessible, there is an elementary submodel $X$ of $\HH{\vartheta}$ of cardinality $\kappa$ with $\kappa+1\cup\{\theta\}\subseteq X$ and ${}^{{<}\kappa}X\subseteq X$. Let $\map{\pi}{X}{M}$ denote the corresponding transitive collapse. Then $M$ is a $\kappa$-model and Theorem \ref{theorem:Hauser} yields an elementary embedding $\map{j}{M}{N}$ with $\crit{j}=\kappa$ that satisfies the properties (a)--(c) listed in the statement (ii) of Theorem \ref{theorem:Hauser}. Note that the assumption ${}^{{<}\kappa}N\subseteq N$ implies that $\kappa$ is inaccessible in $N$.

 \begin{claim*}
  We have $$(\VV_\kappa\models\varphi)^M ~ \Longrightarrow ~ (\VV_\kappa\models\varphi)^N$$ for all $\Pi^m_n$-formulas $\varphi$ whose parameters are contained in $M\cap\VV_{\kappa+1}$.   
 \end{claim*}
 
 \begin{proof}[Proof of the Claim]
 
  Assume that $(\VV_\kappa\models\varphi)^M$ holds. This assumption implies that $\VV_\kappa\models\varphi$ holds, because $\pi^{{-}1}\restriction\VV_{\kappa+1}=\id_{\VV_{\kappa+1}}$ and $\VV_{\kappa+m}\in\HH{\vartheta}$. 
By Statement (c) of Theorem \ref{theorem:Hauser}, we can conclude that $(\VV_\kappa\models\varphi)^N$ holds. 
 \end{proof}

Set $\theta_*=\pi(\theta)$, $M_*=\HH{\theta_*}^M$ and $j_*=j\restriction M_*$. Since $j,M\in N$, we also have $j_*,M_*\in N$. Moreover, in $N$, the map $\map{j_*}{M_*}{\HH{j(\theta_*)}^N}$ is a small embedding for $j(\kappa)$. 
If $\varphi$ is a $\Pi^m_n$-formula  with parameters in $M_*\cap\VV_{\kappa+1}$ such that $(\VV_\kappa\models\varphi)^{M_*}$ holds, then $\theta>\beth_m(\kappa)$ implies that $(\VV_\kappa\models\varphi)^M$ holds, and we can use the above claim to conclude that $(\VV_\kappa\models\varphi)^N$ holds. By elementarity, this shows that, in $M$, there is a small embedding $\map{j^\prime}{M^\prime}{\HH{\theta_*}}$ for $\kappa$ such that $\crit{j^\prime}$ is inaccessible and $\VV_{\crit{j^\prime}}\models\varphi$ holds for every $\Pi^m_n$-formula $\varphi$ with parameters in $M^\prime\cap\VV_{\crit{j}+1}$ with the property that $(\VV_{\crit{j}}\models\varphi)^{M^\prime}$ holds. Since $\VV_{\kappa+m}\in\HH{\vartheta}$, we can conclude that $\pi^{{-}1}(j^\prime)$ is a small embedding for $\kappa$ witnessing that (ii) holds for $\theta$.

Next, Lemma \ref{lemma:addxtorange} shows that (iii) is a consequence of (ii). Hence,  assume, towards a contradiction, that (iii) holds and that there is a $\Pi^m_n$-formula $\varphi(x)$ with $x\in\VV_{\kappa+1}$, $\VV_\kappa\models\varphi(x)$ and $\VV_\delta\models\neg\varphi(x\cap\VV_\delta)$  for all  $\delta<\kappa$. Pick a regular cardinal $\theta>\beth_m(\kappa)$ such that there is a small embedding $\map{j}{M}{\HH{\theta}}$ for $\kappa$ that satisfies the statements  listed in (iii) with respect to $x$. Since $\VV_{\kappa+m}\in\HH{\theta}$, elementarity yields that $(\VV_{\crit{j}}\models\varphi(j^{-1}(x)))^M$. Thus our assumptions on $j$ allow us to conclude that $\VV_{\crit{j}}\models\varphi(j^{-1}(x))$, contradicting the above assumption.  
\end{proof}

In the case $m=1$, the equivalence between the statements (i) and (ii) in Lemma \ref{lemma:SmallEmbCharIndescribable} can be rewritten in the following way, using the fact that we can canonically identify  $\Sigma_n$-formulas using  parameters in $\HH{\crit j^+}$ with $\Sigma^1_n$-formulas using parameters in $V_{\crit j+1}$ such that the given $\Sigma_n$-formula holds true in $\HH{\crit j^+}$ if and only if the corresponding  $\Sigma^1_n$-formula holds in $V_{\crit j}$.

\begin{corollary}
  Given $0<n<\omega$, the following statements are equivalent for every cardinal $\kappa$: 
 \begin{enumerate}[leftmargin=0.7cm]
  \item $\kappa$ is $\Pi^1_n$-indescribable.

  \item For all sufficiently large cardinals $\theta$, there is a small embedding $\map{j}{M}{\HH{\theta}}$ for $\kappa$ such that $\HH{\crit{j}^+}^M\prec_{\Sigma_n}\HH{\crit{j}^+}$. \qed 
 \end{enumerate}
\end{corollary}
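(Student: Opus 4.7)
The plan is to derive the corollary from the $m=1$ case of Lemma~\ref{lemma:SmallEmbCharIndescribable} via the dictionary, recalled in the paragraph preceding the corollary, between $\Pi^1_n$-formulas over $\VV_\lambda$ with parameters in $\VV_{\lambda+1}$ and $\Pi_n$-formulas over $\HH{\lambda^+}$ with parameters in $\HH{\lambda^+}$, together with the upgrade of a one-sided $\Pi_n$-preservation property to full $\Sigma_n$-elementarity.

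First I will observe that, in both directions of the equivalence, we may assume that $\crit{j}$ is inaccessible: in (i)$\Rightarrow$(ii) this is built into the small embeddings produced in the proof of Lemma~\ref{lemma:SmallEmbCharIndescribable}, while in (ii)$\Rightarrow$(i) it follows from $\Sigma_1$-elementarity of $\HH{\crit{j}^+}^M$ in $\HH{\crit{j}^+}$ combined with the elementarity of $j$ (which forces every potential witness to non-inaccessibility of $\crit{j}$ computed in $M$ to lie in $\HH{\crit{j}^+}^M$, whose absence there lifts to $\HH{\crit{j}^+}$). By Lemma~\ref{lemma:ModelsContainInitialSegment}, inaccessibility of $\crit{j}$ then gives $\VV_{\crit{j}}=\HH{\crit{j}}\subseteq M$, $\VV_{\crit{j}}^M=\VV_{\crit{j}}$, and $\VV_{\crit{j}+1}\subseteq\HH{\crit{j}^+}$, so that the translation between $\Pi^1_n$-formulas over $\VV_{\crit{j}}$ with parameters in $M\cap\VV_{\crit{j}+1}$ and $\Pi_n$-formulas over $\HH{\crit{j}^+}$ with parameters in $\HH{\crit{j}^+}^M$ is available both internally in $M$ and externally in $\VV$.

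For (i)$\Rightarrow$(ii), I would take the small embedding $\map{j}{M}{\HH{\theta}}$ given by Lemma~\ref{lemma:SmallEmbCharIndescribable} and translate its one-sided $\Pi^1_n$-preservation property into $\Pi_n$-upward preservation from $\HH{\crit{j}^+}^M$ to $\HH{\crit{j}^+}$ for formulas with parameters in $\HH{\crit{j}^+}^M$. To upgrade this to $\HH{\crit{j}^+}^M\prec_{\Sigma_n}\HH{\crit{j}^+}$, I run a short induction on $k\le n$: transitivity yields $\Sigma_0$-absoluteness; $\Pi_k$-upward preservation (contained in $\Pi_n$-upward preservation, since $\Pi_k\subseteq\Pi_n$) lets a $\Sigma_{k+1}$-statement true in $\HH{\crit{j}^+}^M$ be pushed up to $\HH{\crit{j}^+}$ by picking a witness in $\HH{\crit{j}^+}^M$ and lifting its $\Pi_k$-body, while $\Sigma_{k+1}$-downward preservation follows by negating $\Pi_{k+1}$-upward preservation. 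The converse (ii)$\Rightarrow$(i) is then immediate: $\Sigma_n$-elementarity yields $\Pi_n$-upward preservation of formulas with parameters in $\HH{\crit{j}^+}^M$, which via the dictionary is exactly the condition required by Lemma~\ref{lemma:SmallEmbCharIndescribable}.

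The main obstacle is the bookkeeping in the translation, in particular verifying that $M$ and $\VV$ agree on the coding of parameters in $\HH{\crit{j}^+}^M$ by elements of $\VV_{\crit{j}+1}$; this is routine given $\HH{\crit{j}}\subseteq M$ and the absoluteness of the relevant coding between transitive models containing the coding relations, and is in any case precisely the dictionary already committed to in the paragraph preceding the corollary.
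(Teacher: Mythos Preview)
Your approach is essentially the paper's: the corollary is stated immediately after the one-sentence justification that Lemma~\ref{lemma:SmallEmbCharIndescribable}(ii) for $m=1$ can be rewritten via the standard dictionary between $\Sigma^1_n$-formulas over $\VV_{\crit{j}}$ with parameters in $\VV_{\crit{j}+1}$ and $\Sigma_n$-formulas over $\HH{\crit{j}^+}$, and the paper gives no further argument. You are simply filling in the details of that dictionary and making explicit the (standard) upgrade from one-sided $\Pi_n$-upward preservation to full $\Sigma_n$-elementarity, which the paper leaves implicit in the word ``rewritten''.

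One minor remark: your argument that $\crit{j}$ is inaccessible in the (ii)$\Rightarrow$(i) direction is phrased as though it relied on $\crit{j}$ already being inaccessible in $M$; it would be cleaner to note that ``$\crit{j}$ is a regular strong limit cardinal'' is $\Pi_1$ over $\HH{\crit{j}^+}$, holds in $\HH{\crit{j}^+}^M$ because $\kappa$ is a cardinal and $j$ is elementary together with the $\Sigma_1$-elementarity hypothesis, and therefore transfers to $\HH{\crit{j}^+}$ and hence to $\VV$. Since the paper does not spell out this step either, this does not distinguish your proof from theirs.
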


As mentioned in the introduction, throughout this paper, we will show that whenever we have a direct implication between two large cardinals properties that we provide small embedding characterizations for, then amongst the embeddings witnessing the stronger property, we may also find such witnessing the weaker one. First of all,  Lemma \ref{lemma:SmallEmbCharIndescribable} directly shows that small embeddings witnessing $\Pi^m_n$-indescribability also witnesses all smaller degrees of indescribability. Next, it is also easy to see that these embeddings possess the properties mentioned in the small embedding characterization of Mahlo cardinals provided by Corollary \ref{corollary:SmallCardsSmallChar}.

\begin{corollary}
 Given $0<m,n<\omega$, let $\kappa$ be $\Pi^m_n$-indescribable and let $\theta$ be a sufficiently large cardinal such that there is a small embedding $\map{j}{M}{\HH{\theta}}$ for $\kappa$ witnessing $\Pi^m_n$-indescribability of $\kappa$, as in statement (ii) of Lemma \ref{lemma:SmallEmbCharIndescribable}. Then $\crit j$ is inaccessible, hence $j$ witnesses the Mahloness of $\kappa$, as in statement (v) of Corollary \ref{corollary:SmallCardsSmallChar}.   \qed 
\end{corollary}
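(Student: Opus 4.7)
My approach is to establish the two halves of inaccessibility of $\crit{j}$ separately: that $\crit{j}$ is a strong limit cardinal via Lemma \ref{lemma:ModelsContainInitialSegment}, and that $\crit{j}$ is regular via the indescribability witness property of $j$ applied to a suitable parameter-free first-order sentence. Once both are in hand, the conclusion follows immediately from Corollary \ref{corollary:SmallCardsSmallChar}(v).

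For the strong limit half, since $\Pi^m_n$-indescribability implies inaccessibility, $\kappa$ is in particular a strong limit cardinal. Applying Lemma \ref{lemma:ModelsContainInitialSegment} to the small embedding $j$ then yields that $\crit{j}$ is itself a strong limit cardinal.

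For regularity, I would fix the parameter-free $\calL_\in$-sentence $\varphi$ expressing that every function whose domain is an ordinal has bounded range in the ordinals. Evaluated inside $\VV_\alpha$, this is equivalent to the regularity of $\alpha$. Since $\kappa$ is regular, $\VV_\kappa \models \varphi$; for $\theta$ sufficiently large we have $\VV_\kappa \in \HH{\theta}$, and by absoluteness of satisfaction for set-sized structures, $\HH{\theta}$ also sees that $\VV_\kappa \models \varphi$. Elementarity of $j$ together with $j(\crit{j}) = \kappa$ then yields $(\VV_{\crit{j}} \models \varphi)^M$, and since $\varphi$ is parameter-free and first-order (hence trivially $\Pi^m_n$ for all $m, n \geq 1$), the witness property of $j$ yields $\VV_{\crit{j}} \models \varphi$, so $\crit{j}$ is regular.

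There is no real obstacle; the essential observation is that regularity of the top ordinal of $\VV_\alpha$ can be captured by a parameter-free first-order sentence, rendering it a legitimate instance for the indescribability witness property, even though this witness property is not needed for the strong limit half, which is handled by the generic Lemma \ref{lemma:ModelsContainInitialSegment}.
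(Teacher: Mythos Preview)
Your overall strategy is right, and the use of Lemma~\ref{lemma:ModelsContainInitialSegment} for the strong limit half is correct. However, there is a genuine gap in the regularity argument: the first-order $\calL_\in$-sentence you describe does \emph{not} characterize regularity of $\alpha$ when interpreted in $\VV_\alpha$. Any function $f\in\VV_\alpha$ already lies in some $\VV_\beta$ with $\beta<\alpha$ (since $\alpha$ is a limit ordinal), and hence its range of ordinals is automatically bounded by $\beta$. Conversely, a cofinal map $\map{f}{\cof{\alpha}}{\alpha}$ witnessing singularity is a subset of $\VV_\alpha$ of rank exactly $\alpha$, so $f\in\VV_{\alpha+1}\setminus\VV_\alpha$ and is invisible to first-order quantification over $\VV_\alpha$. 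Thus your sentence holds in $\VV_\alpha$ for \emph{every} limit ordinal $\alpha$, regular or not, and the implication $(\VV_{\crit{j}}\models\varphi)^M\Rightarrow\VV_{\crit{j}}\models\varphi$ tells you nothing.

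The fix is immediate: make the quantifier over functions second-order, yielding a parameter-free $\Pi^1_1$-sentence. This correctly expresses regularity of $\alpha$ over $\VV_\alpha$, and since every $\Pi^1_1$-sentence is $\Pi^m_n$ for all $m,n\geq 1$, the witness property of $j$ still applies and your argument goes through. In fact the paper's implicit one-line argument is even more direct: inaccessibility of $\alpha$ is itself expressible by a parameter-free $\Pi^1_1$-sentence over $\VV_\alpha$, so a single application of the witness property gives that $\crit{j}$ is inaccessible, with no need to treat the strong limit and regular parts separately or to invoke Lemma~\ref{lemma:ModelsContainInitialSegment}.
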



\section{Subtle, Ineffable and $\lambda$-Ineffable Cardinals}\label{section:subtleineffable}

The results of this section provide small embedding characterizations for \emph{ineffable} and \emph{subtle} cardinals (introduced in \cite{JensenKunen1969:Ineffable}) and \emph{$\lambda$-ineffable} cardinals (introduced in \cite{MR0327518}). These large cardinal concepts all rely on the following definition.

\begin{definition}
 Given a set $A$, a sequence $\seq{d_a}{a\in A}$ is an \emph{$A$-list} if $d_a\subseteq a$ holds for all $a\in A$. 
\end{definition}

Then an uncountable regular cardinal $\kappa$ is \emph{subtle} if for every $\kappa$-list $\seq{d_\alpha}{\alpha<\kappa}$ and every club $C$ in $\kappa$, there are $\alpha,\beta\in C$ with $\alpha<\beta$ and $d_\alpha=d_\beta\cap\alpha$.

\begin{lemma}\label{lemma:SubtleSmallChar}
 The following statements are equivalent for every cardinal $\kappa$: 
 \begin{enumerate}[leftmargin=0.7cm]
  \item $\kappa$ is subtle.

  \item For all sufficiently large cardinals $\theta$, for every $\kappa$-list $\vec{d}=\seq{d_\alpha}{\alpha<\kappa}$ and for every club $C$ in $\kappa$,  there is a small embedding $\map{j}{M}{\HH{\theta}}$ for $\kappa$ such that $\vec{d},C\in \ran{j}$ 
   and $d_\alpha=d_{\crit{j}}\cap\alpha$ for some $\alpha\in C\cap\crit{j}$.
 \end{enumerate} 
\end{lemma}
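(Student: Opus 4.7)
The plan is to parallel the proof of Lemma \ref{lemma:CharStatLimit}, with the stationarity argument there replaced by an application of subtlety in the forward direction, and the club-reflection trick reused in the backward direction.

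For the direction (i) $\Rightarrow$ (ii), I would fix a sufficiently large cardinal $\theta$, a $\kappa$-list $\vec{d}$, and a club $C$, and build a continuous increasing chain $\seq{X_\alpha}{\alpha<\kappa}$ of elementary substructures of $\HH{\theta}$, each of cardinality less than $\kappa$, with $\vec{d},C\in X_0$ and $\alpha\subseteq X_\alpha\cap\kappa\in\kappa$ for all $\alpha<\kappa$. The set $C'=\Set{\alpha<\kappa}{X_\alpha\cap\kappa=\alpha}$ is then a club in $\kappa$, and so is $C\cap C'$. Applying subtlety of $\kappa$ to $\vec{d}$ and $C\cap C'$ yields ordinals $\alpha<\beta$ in $C\cap C'$ with $d_\alpha=d_\beta\cap\alpha$. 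Letting $\map{\pi_\beta}{X_\beta}{M}$ denote the transitive collapse, the inverse $\map{j=\pi_\beta^{-1}}{M}{\HH{\theta}}$ is the desired small embedding: its critical point is $\beta$, we have $\vec{d},C\in\ran{j}$ because both live in $X_\beta$, and the equality $d_\alpha=d_\beta\cap\alpha=d_{\crit{j}}\cap\alpha$ with $\alpha\in C\cap\crit{j}$ is exactly what is required.

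For the direction (ii) $\Rightarrow$ (i), I would first note that instantiating (ii) with any fixed $\kappa$-list and any fixed club produces a small embedding for $\kappa$, so by Corollary \ref{corollary:SmallCardsSmallChar}(i), $\kappa$ is uncountable and regular. Given an arbitrary $\kappa$-list $\vec{d}$ and an arbitrary club $C$ in $\kappa$, I would pick $\theta$ large enough that the statement \anf{$C$ is a club in $\kappa$} is absolute between $\HH{\theta}$ and $\VV$ and so that (ii) applies, and obtain from (ii) a small embedding $\map{j}{M}{\HH{\theta}}$ for $\kappa$ together with some $\alpha\in C\cap\crit{j}$ satisfying $d_\alpha=d_{\crit{j}}\cap\alpha$. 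The key remaining observation is that $\crit{j}\in C$: letting $C^*\in M$ with $j(C^*)=C$, elementarity shows that $C^*$ is a club in $\crit{j}$ inside $M$, hence $j(C^*)=C$ is a club in $\kappa$ of which $\crit{j}$ is a limit point, exactly as in the proof of Lemma \ref{lemma:CharStatLimit}, and closedness of $C$ forces $\crit{j}\in C$. The pair $\alpha<\crit{j}$ then lies in $C$ and witnesses subtlety of $\kappa$ for $\vec{d}$ and $C$.

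The argument is essentially routine once Lemma \ref{lemma:CharStatLimit} is in hand, and I do not anticipate a genuine obstacle. The only delicate point is the club-reflection step $\crit{j}\in C$ in the backward direction, but this is already packaged in the proof of Lemma \ref{lemma:CharStatLimit}; the rest is bookkeeping about the transitive collapse and the definition of a $\kappa$-list.
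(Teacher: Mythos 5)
Your proposal is correct and follows essentially the same path as the paper's proof: the forward direction builds the same continuous elementary chain and applies subtlety to the intersection of $C$ with the club of fixed points before collapsing the relevant $X_\beta$, and the backward direction invokes Corollary \ref{corollary:SmallCardsSmallChar} for regularity and then uses $C\in\ran{j}$ to place $\crit{j}$ in $C$ via the limit-point argument from Lemma \ref{lemma:CharStatLimit}. No gaps.
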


\begin{proof}
  First, assume first that $\kappa$ is subtle. Pick a cardinal $\theta>\kappa$, a club $C$ in $\kappa$ and a $\kappa$-list $\vec{d}=\seq{d_\alpha}{\alpha<\kappa}$. Let $\seq{X_\alpha}{\alpha<\kappa}$ be a continuous and increasing sequence of elementary substructures of $\HH{\theta}$ of cardinality less than $\kappa$ with $\vec{d},C\in X_0$ and $\alpha\subseteq X_\alpha\cap\kappa\in\kappa$ for all $\alpha<\kappa$. Set $D=\Set{\alpha\in C}{\alpha=M_\alpha\cap\kappa}$. Then $D$ is a club in $\kappa$ and the subtlety of $\kappa$ yields $\alpha,\beta\in D\subseteq C$ with $\alpha<\beta$ and $d_\alpha=d_\beta\cap\alpha$. Let $\map{\pi}{X_\beta}{M}$ denote the transitive collapse of $X_\beta$. Then  $\map{\pi^{{-}1}}{M}{\HH{\theta}}$ is a small embedding for $\kappa$ with $\crit{\pi^{{-}1}}=\beta$, $\vec{d},C\in\ran{\pi^{{-}1}}$ and $d_\alpha=d_{\crit{\pi^{{-}1}}}\cap\alpha$. 
  
Now, assume that (ii) holds. Then Corollary \ref{corollary:SmallCardsSmallChar} implies that $\kappa$ is uncountable and regular. Fix a $\kappa$-list $\vec{d}=\seq{d_\alpha}{\alpha<\kappa}$ and a club $C$ in $\kappa$. Let $\theta$ be a sufficiently large cardinal  such that there is a small embedding $\map{j}{M}{\HH{\theta}}$ for $\kappa$ such that $\vec{d},C\in \ran{j}$ and $d_\alpha=d_{\crit{j}}\cap\alpha$ for some $\alpha\in C\cap\crit{j}$. Since $C\in\ran{j}$, elementarity implies that $\crit{j}$ is a limit point of $C$ and hence $\crit{j}\in C$.  
\end{proof}

\begin{remark}
 Note that, unlike all the other small embedding characterizations that we provide in this paper, the above characterization of subtle cardinals is not based on a correctness property between the domain model $M$ and $\VV$. However, we think that the above characterization is still useful. This view will be undermined by the applications of this characterization presented in Section \ref{section:ConsInternal}.
\end{remark}

In \cite[Theorem 3.6.3]{MR0460120}, it is shown that there is a totally indescribable cardinal below any subtle cardinal, and in fact a minor adaption of the proof of that theorem shows that there is a stationary set of totally indescribable cardinals below every subtle cardinal. Not every small embedding for $\kappa$ witnessing an instance of subtleness has a critical point that is totally indescribable (and would thus witness that $\kappa$ is a stationary limit of totally indescribable cardinals by Lemma \ref{lemma:CharStatLimit}), since if the $\kappa$-list $\vec d$ and the club $C$ are both trivial, an embedding witnessing the corresponding instance of subtleness as in (ii) of Lemma \ref{lemma:SubtleSmallChar} merely witnesses the regularity of $\kappa$, by Corollary \ref{corollary:SmallCardsSmallChar}. However, the next lemma shows that we can pick $\vec d$ and $C$ such that any small embedding witnessing subtleness of $\kappa$ with respect to  $\vec d$ and $C$ has a critical point that is totally indescribable.  

\begin{lemma}\label{Lemma:CritIndes}
    Let $\kappa$ be a subtle cardinal. 
   Then there is a $\kappa$-list $\vec{d}$ and a club $C$ in $\kappa$ with the property that, whenever $\theta$ is a  sufficiently large cardinal and $\map{j}{M}{\HH{\theta}}$ is a small embedding for $\kappa$ witnessing the subtlety of $\kappa$ with respect to $\vec d$ and $C$, as in statement (ii) of Lemma \ref{lemma:SubtleSmallChar}, then $\crit j$ is totally indescribable, hence $j$ witnesses that $\kappa$ is a stationary limit of totally indescribable cardinals, as in statement (ii) of Lemma \ref{lemma:CharStatLimit}. 
 \end{lemma}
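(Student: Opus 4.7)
The plan is to adapt the classical argument (as in \cite[Theorem 3.6.3]{MR0460120}) that every subtle cardinal is a stationary limit of totally indescribable cardinals. The idea is to encode, at each $\alpha<\kappa$ that fails to be totally indescribable, a canonical witness of this failure as $d_\alpha$, using a coherent family of bijections between $\alpha$ and $V_\alpha$. The subtlety-matching condition $d_\alpha=d_{\crit{j}}\cap\alpha$ will then transfer the witness at $\crit{j}$ down to $V_\alpha$, contradicting the very property it was chosen to witness. The final clause is immediate from Lemma \ref{lemma:CharStatLimit} once $\crit{j}$ is known to be totally indescribable.

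Since every subtle cardinal is inaccessible (in fact Mahlo, by the remarks preceding the lemma), I would fix a bijection $e_\kappa\colon\kappa\to V_\kappa$ obtained by enumerating $V_\kappa$ in order of rank, using some fixed well-ordering inside each rank; then for every inaccessible $\alpha<\kappa$, the restriction $e_\alpha:=e_\kappa\restriction\alpha$ is a bijection onto $V_\alpha$. Let $C$ be the club of inaccessible $\alpha<\kappa$. For each $\alpha<\kappa$ that is not totally indescribable, pick the lexicographically least quadruple $(m_\alpha,n_\alpha,\varphi_\alpha,A_\alpha)$ such that $\varphi_\alpha$ is a $\Pi^{m_\alpha}_{n_\alpha}$-formula with $A_\alpha\subseteq V_\alpha$, $V_\alpha\models\varphi_\alpha(A_\alpha)$, and $V_\delta\not\models\varphi_\alpha(A_\alpha\cap V_\delta)$ for every $\delta<\alpha$ (comparing parameters via $e_\alpha$ when $\alpha\in C$), and set
$$d_\alpha=\{2\cdot\ulcorner\varphi_\alpha\urcorner\}\cup\Set{2\gamma+1}{\gamma<\alpha,\,e_\alpha(\gamma)\in A_\alpha}.$$
If $\alpha$ is totally indescribable, let $d_\alpha=\emptyset$.

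Now let $\theta$ be sufficiently large and let $\map{j}{M}{\HH{\theta}}$ be a small embedding for $\kappa$ witnessing subtlety with respect to $\vec d$ and $C$, with $\alpha\in C\cap\crit{j}$ satisfying $d_\alpha=d_{\crit{j}}\cap\alpha$. Writing $\beta=\crit{j}$, note that $\beta$ is inaccessible: it is regular by elementarity and strong limit by Lemma \ref{lemma:ModelsContainInitialSegment} applied to the inaccessible $\kappa$, so $e_\beta\colon\beta\to V_\beta$ is indeed a bijection. Assume for contradiction that $\beta$ is not totally indescribable. Since the encoding separates the formula on the even ordinals from the parameter on the odd ordinals, and since $e_\alpha=e_\beta\restriction\alpha$, the set $d_\beta\cap\alpha$ is precisely the encoding of $(m_\beta,n_\beta,\varphi_\beta,A_\beta\cap V_\alpha)$ via $e_\alpha$; in particular it contains $2\cdot\ulcorner\varphi_\beta\urcorner<\omega$, hence is non-empty, forcing $\alpha$ also to not be totally indescribable. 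Injectivity of the encoding then yields $\varphi_\alpha=\varphi_\beta$ and $A_\alpha=A_\beta\cap V_\alpha$, so that $V_\alpha\models\varphi_\alpha(A_\alpha)$ reads as $V_\alpha\models\varphi_\beta(A_\beta\cap V_\alpha)$, contradicting the defining property of $(\varphi_\beta,A_\beta)$ as a witness at $\beta>\alpha$.

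The main obstacle is the bookkeeping around the coherent encoding: one must check that the rank-ordered enumeration $e_\kappa$ restricted to any inaccessible $\alpha<\kappa$ really is a bijection onto $V_\alpha$ (using that inaccessibles are strong limits, so the number of sets of rank below $\alpha$ is exactly $\alpha$), and that intersecting $d_\beta$ with $\alpha$ reads off as the encoding of the restricted pair $(\varphi_\beta,A_\beta\cap V_\alpha)$ under $e_\alpha$. Once the two-piece encoding (formula on evens, parameter on odds) is in place, both points reduce to routine verification. A secondary care is to ensure that $d_\alpha=\emptyset$ captures total indescribability exactly, which is achieved by always placing the strictly positive natural number $2\cdot\ulcorner\varphi_\alpha\urcorner$ into any non-empty $d_\alpha$.
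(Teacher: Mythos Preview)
Your overall strategy matches the paper's proof: encode a putative indescribability counterexample at each $\alpha$ via a coherent bijection $V_\kappa\leftrightarrow\kappa$, and use the subtlety match $d_\alpha=d_{\crit j}\cap\alpha$ to reflect the counterexample down, obtaining a contradiction. The encoding with evens/odds in place of the paper's G\"odel pairing is immaterial.

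However, there is a genuine gap. The set of inaccessible cardinals below $\kappa$ is \emph{not} a club: a supremum of countably many inaccessibles has cofinality $\omega$ and hence is not inaccessible. Since the lemma requires $C$ to be a club (and Lemma~\ref{lemma:SubtleSmallChar}(ii) quantifies over clubs), your choice of $C$ is illegitimate. Relatedly, your justification that $\beta=\crit j$ is inaccessible is flawed: elementarity gives only that $\beta$ is regular \emph{in $M$}, not in $\VV$, and Lemma~\ref{lemma:ModelsContainInitialSegment} yields that $\beta$ is a strong limit, but a singular strong limit (e.g.\ $\beth_\omega$) need not satisfy $\betrag{\VV_\beta}=\beta$, so your bijection $e_\beta$ need not be onto $\VV_\beta$.

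The fix is exactly what the paper does: take $C=\Set{\alpha<\kappa}{\betrag{\VV_\alpha}=\alpha}$, which \emph{is} a club. Then $C\in\ran j$ and closure of $C$ give $\crit j\in C$ directly, with no need to argue regularity of $\crit j$. With this $C$, your encoding and the remainder of your argument go through verbatim.
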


\begin{proof}
 Let $C$ be the club $\Set{\alpha<\kappa}{\betrag{\VV_\alpha}=\alpha}$ and let $\map{h}{\VV_\kappa}{\kappa}$ be a bijection with $h[\VV_\alpha]=\alpha$ for all $\alpha\in C$. Let $\goedel{\cdot}{\cdot}$ denote the G\"odel pairing function and let $\vec{d}=\seq{d_\alpha}{\alpha<\kappa}$ be a $\kappa$-list with the following properties: 
  \begin{enumerate}
   \item If $\alpha\in C$ is not totally indescribable, then there is a $\Pi^m_n$-formula $\varphi$ and a subset $A$ of $\VV_\alpha$ such that these objects provide a counterexample to the $\Pi^m_n$-indescribability of $\alpha$. Then $d_\alpha=\{\goedel{0}{\lceil\varphi\rceil}\}\cup\Set{\goedel{1}{h(a)}}{a\in A}$, where $\lceil\varphi\rceil\in\omega$ is the G\"odel number of $\varphi$ in some fixed G\"odelization of second order set theory. 
   
   \item Otherwise, $d_\alpha$ is the empty set. 
  \end{enumerate}

Let $\theta$ be a sufficiently large cardinal and let $\map{j}{M}{\HH{\theta}}$ be a small embedding for $\kappa$ that witnesses the subtlety of $\kappa$ with respect to $\vec d$ and $C$, as in Lemma \ref{lemma:SubtleSmallChar}. Then $\crit{j}\in C$.  Assume for a contradiction that $\crit j$ is not totally indescribable. Then there is a $\Pi^m_n$-formula $\varphi$ and a subset $A$ of $\VV_\alpha$ such that $d_\alpha=\{\goedel{0}{\lceil\varphi\rceil}\}\cup\Set{\goedel{1}{h(a)}}{a\in A}$, $\VV_{\crit{j}}\models\varphi(A)$ and $\VV_\alpha\models\neg\varphi(A\cap\VV_\alpha)$ for all $\alpha<\crit{j}$. By our assumptions, there is an $\alpha\in C\cap \crit j$ with $d_\alpha=d_{\crit j}\cap\alpha$. In this situation, our definition of $d_\alpha$ ensures that the formula $\varphi$ and the subset $A\cap\VV_\alpha$ of $\VV_\alpha$ provide a counterexample to the $\Pi^m_n$-indescribability of $\alpha$. In particular, we know that $\VV_\alpha\models\varphi(A\cap\VV_\alpha)$ holds, a contradiction.  
\end{proof}

Next, we consider small embedding characterizations of ineffable cardinals, where a regular uncountable cardinal $\kappa$ is \emph{ineffable} if for every $\kappa$-list $\seq{d_\alpha}{\alpha<\kappa}$, there exists a subset $D$ of $\kappa$ such that the set $\Set{\alpha<\kappa}{d_\alpha=D\cap\alpha}$ is stationary in $\kappa$.

\begin{lemma}\label{lemma:IneffableSmallChar}
 The following statements are equivalent for every cardinal $\kappa$: 
 \begin{enumerate}[leftmargin=0.7cm]
  \item $\kappa$ is ineffable.

  \item For all sufficiently large cardinals $\theta$ and for every $\kappa$-list $\vec{d}=\seq{d_\alpha}{\alpha<\kappa}$, there is a small embedding $\map{j}{M}{\HH{\theta}}$ for $\kappa$ with $\vec{d}\in\ran{j}$ and $d_{\crit{j}}\in M$. 
 \end{enumerate}
\end{lemma}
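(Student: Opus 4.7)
The plan is to prove (i) $\Rightarrow$ (ii) by the same continuous chain argument used in Lemma \ref{lemma:CharStatLimit} and Lemma \ref{lemma:SubtleSmallChar}, and to prove (ii) $\Rightarrow$ (i) by a reflection argument that uses Lemma \ref{lemma:addxtorange} to push a choice function for witnessing clubs into the range of the embedding.

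For (i) $\Rightarrow$ (ii), I would fix a $\kappa$-list $\vec{d}$ and a cardinal $\theta$ with $\vec{d}\in\HH{\theta}$, and invoke ineffability of $\kappa$ to obtain a set $D\subseteq\kappa$ and a stationary set $S\subseteq\kappa$ with $d_\alpha=D\cap\alpha$ for every $\alpha\in S$. Forming a continuous increasing chain $\seq{X_\alpha}{\alpha<\kappa}$ of elementary submodels of $\HH{\theta}$ of size less than $\kappa$ with $\vec{d},D\in X_0$ and $\alpha\subseteq X_\alpha\cap\kappa\in\kappa$ for every $\alpha<\kappa$, I pick $\beta\in S$ with $X_\beta\cap\kappa=\beta$, let $\pi\colon X_\beta\to M$ be the Mostowski collapse, and set $j=\pi^{{-}1}$. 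Then $\vec{d}\in\ran{j}$ and $\crit{j}=\beta$ are immediate, and since $\pi$ acts as the identity on $\beta$, we have $\pi(D)=D\cap\beta=d_\beta$, so $d_{\crit{j}}\in M$ as required.

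For (ii) $\Rightarrow$ (i), I would first observe that the property in (ii) is easily seen to be downwards-absolute, and that applying (ii) to the trivial list $\seq{\emptyset}{\alpha<\kappa}$ recovers Corollary \ref{corollary:SmallCardsSmallChar}(i), so $\kappa$ is uncountable and regular. Assume, for a contradiction, that some $\kappa$-list $\vec{d}$ witnesses the failure of ineffability, that is, for every $D\subseteq\kappa$ the set $\Set{\alpha<\kappa}{d_\alpha=D\cap\alpha}$ is non-stationary. Using $\AC$, fix a function $F$ with domain $\POT{\kappa}$ sending each $D$ to a club $F(D)\subseteq\kappa$ on which $d_\alpha\neq D\cap\alpha$. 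Combining (ii) with Lemma \ref{lemma:addxtorange} applied to $z=F$, I obtain a small embedding $\map{j}{M}{\HH{\theta}}$ for $\kappa$ with $\vec{d},F\in\ran{j}$ and $d^*:=d_{\crit{j}}\in M$. Setting $\bar\kappa=\crit{j}$ and $D=j(d^*)$, elementarity gives $D\cap\bar\kappa=d^*$, and setting $\bar{F}=j^{{-}1}(F)$, elementarity also gives $j(\bar{F}(d^*))=F(D)$, so $F(D)\in\ran{j}$. As $F(D)$ is a club in $\kappa$, elementarity forces $\bar\kappa$ to be a limit point of $F(D)$ and thus $\bar\kappa\in F(D)$; but this contradicts $d_{\bar\kappa}=d^*=D\cap\bar\kappa$.

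The main obstacle is the apparent circularity in the backward direction: the most natural attempt is to set $D=j(d^*)$ and argue that $\Set{\alpha<\kappa}{d_\alpha=D\cap\alpha}$ is stationary, but the clubs that would witness the failure of stationarity depend on $D$, which itself depends on the embedding $j$. The essential move is to encode the whole assignment $D\mapsto F(D)$ as a single parameter $F$ that can be pushed into $\ran{j}$ via Lemma \ref{lemma:addxtorange}, so that by elementarity the relevant witnessing club automatically lies in $\ran{j}$.
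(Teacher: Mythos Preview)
Your proof is correct in both directions, and the forward direction is essentially identical to the paper's argument.

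For the backward direction, however, the paper takes a more direct route that avoids the appeal to Lemma \ref{lemma:addxtorange}. The ``circularity'' you describe does not actually arise: once you set $D=j(d_{\crit j})$, this $D$ is automatically in $\ran j$, so if the set $\Set{\alpha<\kappa}{d_\alpha=D\cap\alpha}$ is non-stationary, then $\HH{\theta}$ sees a witnessing club, and since both $\vec{d}$ and $D$ lie in $\ran j$, elementarity immediately pulls back a club $C_0\in M$ in $\crit j$ with $j(C_0)$ disjoint from that set. Then $\crit j$ is a limit point of $j(C_0)$, hence $\crit j\in j(C_0)$, contradicting $d_{\crit j}=D\cap\crit j$. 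There is no need to package all the clubs into a choice function $F$ in advance; elementarity already hands you the single club you need. Your approach works, but the paper's is shorter and makes clearer why the characterization is natural: the candidate $D$ is simply the $j$-image of what $M$ already contains.
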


\begin{proof}
  Assume first that $\kappa$ is ineffable. Pick a $\kappa$-list $\vec{d}=\seq{d_\alpha}{\alpha<\kappa}$ and a cardinal $\theta>\kappa$. Using the ineffability of $\kappa$, we find a subset $D$ of $\kappa$ such that the set $S=\Set{\alpha<\kappa}{d_\alpha=D\cap\alpha}$ is stationary in $\kappa$. With the help of a continuous chain of elementary submodels of $\HH{\theta}$, we then find $X\prec \HH{\theta}$ of size less than $\kappa$ such that $\vec{d},D\in X$ and $X\cap\kappa\in S$. Let $\map{\pi}{X}{M}$ denote the corresponding transitive collapse. Then $\map{\pi^{{-}1}}{M}{\HH{\theta}}$ is a small embedding for $\kappa$ with $\crit{j}\in S$, $\vec{d}\in\ran{\pi^{{-}1}}$ and $d_{\crit j}=D\cap\crit{j}=\pi(D)\in M$. 

 Assume now that (ii) holds. Let $\vec{d}=\seq{d_\alpha}{\alpha<\kappa}$ be a $\kappa$-list and let $\theta$ be a sufficiently large cardinal such that there exists a small embedding $\map{j}{M}{\HH{\theta}}$ for $\kappa$ with $\vec{d}\in\ran{j}$ and $d_{\crit{j}}\in M$.  Assume that there is a club $C$ in $\kappa$ with $d_\alpha\neq j(d_{\crit{j}})\cap\alpha$ for all $\alpha\in C$. Since $\vec{d}\in\ran{j}$, elementarity implies that there is a club subset $C_0$ of $\crit{j}$ in $M$ with $d_\alpha\neq j(d_{\crit{j}})\cap\alpha$ for all $\alpha\in j(C_0)$. But $j(C_0)$ is a club in $\kappa$ and elementarity implies that $\crit{j}$ is a limit point of $j(C_0)$ with $d_{\crit{j}}=j(d_\crit{j})\cap\crit{j}$, a contradiction.  This argument shows that the set $\Set{\alpha<\kappa}{d_\alpha=j(d_{\crit{j}})\cap\alpha}$ is stationary in $\kappa$. 
\end{proof}

Small embeddings for $\kappa$ witnessing that $\kappa$ is ineffable also witness that $\kappa$ is subtle.

\begin{lemma}\label{lemma:IneffableWitnessSubtle}
  Let $\kappa$ be ineffable, let $\vec{d}$ be a $\kappa$-list and let $C$ be a club in $\kappa$.  If $\theta$ is a  sufficiently large cardinal such that there is a small embedding  $\map{j}{M}{\HH{\theta}}$ for $\kappa$ witnessing the ineffability of $\kappa$ with respect to $\vec d$, as in statement (ii) of Lemma \ref{lemma:IneffableSmallChar}, then $C\in\ran{j}$\footnote{The assumption that $C$ is contained in the range of $j$ is harmless by Lemma \ref{lemma:addxtorange}.} implies that $j$ also witnesses the subtlety of $\kappa$ with respect to $\vec d$ and $C$, as in statement (ii) of Lemma \ref{lemma:SubtleSmallChar}.  
\end{lemma}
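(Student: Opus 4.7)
The plan is to run a contradiction argument parallel to (and reusing ideas from) the (ii)$\Rightarrow$(i) direction of the proof of Lemma \ref{lemma:IneffableSmallChar}, but with $C$ now carried along as an additional parameter. Suppose for contradiction that $d_\alpha \neq d_{\crit{j}} \cap \alpha$ holds for every $\alpha \in C \cap \crit{j}$. Set $\bar{d} = j^{-1}(\vec{d})$ and $\bar{C} = j^{-1}(C)$, both elements of $M$; by the ineffability hypothesis we also have $d_{\crit{j}} \in M$.

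The key observation, using that $j$ fixes all ordinals below $\crit{j}$, is that $\bar{d}_\alpha = d_\alpha$ for every $\alpha < \crit{j}$ (since $\bar{d}_\alpha \subseteq \alpha$ is a set of ordinals below $\crit{j}$, hence fixed by $j$, and $j(\bar{d}_\alpha) = d_\alpha$ by definition of $\bar{d}$), and similarly $\bar{C} = C \cap \crit{j}$. Hence the contradiction assumption translates to the following statement in $M$: for every $\alpha \in \bar{C}$, $\bar{d}_\alpha \neq d_{\crit{j}} \cap \alpha$. Applying elementarity of $j$ with parameters $\bar{d}, \bar{C}, d_{\crit{j}} \in M$ (mapped to $\vec{d}, C, j(d_{\crit{j}})$ respectively), and then absoluteness between $\HH{\theta}$ and $\VV$ for $\theta$ sufficiently large, we obtain: in $\VV$, for every $\alpha \in C$, $d_\alpha \neq j(d_{\crit{j}}) \cap \alpha$.

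To reach the contradiction, instantiate this at $\alpha = \crit{j}$. Note $\bar{C}$ is unbounded in $\crit{j}$ in $M$, hence also in $\VV$ by absoluteness of unboundedness; combined with $\bar{C} \subseteq C$ and closedness of $C$, this gives $\crit{j} \in C$. Moreover, since $d_{\crit{j}} \subseteq \crit{j}$ and $j$ fixes ordinals below $\crit{j}$, we have $j(d_{\crit{j}}) \cap \crit{j} = d_{\crit{j}}$. Thus the instantiation produces $d_{\crit{j}} \neq d_{\crit{j}}$, a contradiction. The only mildly delicate point is the absoluteness between $\HH{\theta}$ and $\VV$ of the relevant $\Delta_0$-type statements about sets of ordinals and clubs, which is routine for $\theta$ chosen sufficiently large; everything else is essentially bookkeeping.
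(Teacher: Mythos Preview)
Your proof is correct and follows essentially the same approach as the paper's. Both arguments hinge on the identical pair of facts, namely that $\crit{j}\in C$ and $d_{\crit{j}}=j(d_{\crit{j}})\cap\crit{j}$, together with elementarity applied to the parameters $j^{-1}(\vec{d})$, $j^{-1}(C)$, and $d_{\crit{j}}$; the only difference is that the paper pulls the existential witness back from $\HH{\theta}$ to $M$ directly, whereas you push the universal failure from $M$ to $\HH{\theta}$ and derive a contradiction.
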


\begin{proof}
 Pick a club subset $C_0$ of $\crit{j}$ in $M$ with $j(C_0)=C$. 
  Since $\crit{j}$ is an element of $j(C_0)$ with $d_{\crit{j}}=j(d_{\crit{j}})\cap\crit{j}$, elementarity implies that there is an $\alpha\in C_0\cap\crit{j}$ with $d_\alpha=d_{\crit{j}}\cap\alpha$. Then  $\alpha$ is an element of $C\cap\crit{j}$ with  $d_\alpha=j(d_{\crit{j}})\cap\alpha=d_{\crit{j}}\cap\alpha$.  
\end{proof}

We now show that small embeddings for $\kappa$ witnessing that $\kappa$ is ineffable also witness that $\kappa$ is $\Pi^1_2$-indescribable. Note that the least ineffable cardinal is not $\Pi^1_3$-indescribable.

\begin{lemma}
  Let $\kappa$ be ineffable and let $x\in\VV_{\kappa+1}$. 
  Then there is a $\kappa$-list $\vec{d}$ and a subset $h$ of $\VV_\kappa$ with the property that whenever $\theta$ is a sufficiently large cardinal and $\map{j}{M}{\HH{\theta}}$ is a small embedding for $\kappa$ witnessing the ineffability of $\kappa$ with respect to $\vec{d}$, as in statement (ii) of Lemma \ref{lemma:IneffableSmallChar}, then $h,x\in\ran{j}$ implies that $j$ witnesses the $\Pi^1_2$-indescribability of $\kappa$ with respect to $x$, as in statement (iii) of Lemma \ref{lemma:SmallEmbCharIndescribable}. 
\end{lemma}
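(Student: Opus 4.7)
The plan is to mimic the proof of Lemma \ref{Lemma:CritIndes}, simultaneously encoding into $\vec{d}$ both the information needed to force $\crit{j}$ to be totally indescribable and, for each potential $\Pi^1_2$-failure at level $\alpha$, an explicit counterexample witness. First, I will fix a club $C=\Set{\alpha<\kappa}{\betrag{\VV_\alpha}=\alpha\text{ and }\alpha\text{ is closed under G\"odel pairing}}$, a bijection $\map{h_1}{\VV_\kappa}{\kappa}$ with $h_1[\VV_\alpha]=\alpha$ for $\alpha\in C$, and let $\vec{d}^{sub}=\seq{d^{sub}_\alpha}{\alpha<\kappa}$ together with $C^{sub}$ be the $\kappa$-list and club provided by Lemma \ref{Lemma:CritIndes}. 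For each $\alpha\in C\cap C^{sub}$ and each $\Pi^1_2$-formula $\varphi(W)\equiv\forall X\,\psi_\varphi(X,W)$ (with $\psi_\varphi$ the $\Sigma^1_1$-matrix) satisfying $\VV_\alpha\not\models\varphi(x\cap\VV_\alpha)$, I will fix a witness $X_{\alpha,\varphi}\subseteq\VV_\alpha$ with $\VV_\alpha\models\neg\psi_\varphi(X_{\alpha,\varphi},x\cap\VV_\alpha)$, and let $d^{ind}_\alpha\subseteq\alpha$ collect the G\"odel pairs $\goedel{\lceil\varphi\rceil}{h_1(a)}$ for all such $\varphi$ and all $a\in X_{\alpha,\varphi}$. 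I then set $d_\alpha:=\Set{\goedel{0}{\gamma}}{\gamma\in d^{sub}_\alpha}\cup\Set{\goedel{1}{\gamma}}{\gamma\in d^{ind}_\alpha}$ on $C\cap C^{sub}$ (and $d_\alpha:=\emptyset$ otherwise), and take $h\subseteq\VV_\kappa$ to be a single subset coding the triple $(h_1,C,C^{sub})$.

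Given a small embedding $\map{j}{M}{\HH{\theta}}$ witnessing ineffability of $\kappa$ with respect to $\vec{d}$ with $h,x\in\ran{j}$, I first note that $h\in\ran{j}$ ensures that each of $h_1,C,C^{sub}$ lies in $\ran{j}$. Applying Lemma \ref{lemma:IneffableWitnessSubtle} with the club $C^{sub}$ yields that $j$ witnesses subtlety with respect to $\vec{d}$ and $C^{sub}$; restricting the resulting coincidence $d_\alpha=d_{\crit{j}}\cap\alpha$ to its first G\"odel coordinate translates this into $d^{sub}_\alpha=d^{sub}_{\crit{j}}\cap\alpha$, so that $j$ in fact witnesses subtlety with respect to $\vec{d}^{sub}$ and $C^{sub}$. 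By Lemma \ref{Lemma:CritIndes}, $\crit{j}$ is totally indescribable, hence inaccessible; Lemma \ref{lemma:ModelsContainInitialSegment} then gives $\VV_{\crit{j}}\subseteq M$, so $\VV_{\crit{j}}^M=\VV_{\crit{j}}$, and since $j\restriction\VV_{\crit{j}}=\id$, I obtain $j^{-1}(x)=x\cap\VV_{\crit{j}}$.

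Finally, suppose for contradiction that $\varphi$ is a $\Pi^1_2$-formula for which $(\VV_{\crit{j}}\models\varphi(j^{-1}(x)))^M$ holds while $\VV_{\crit{j}}\not\models\varphi(j^{-1}(x))$. Then the construction of $d^{ind}_{\crit{j}}$ includes the encoding of a specific witness $X_0=X_{\crit{j},\varphi}\subseteq\VV_{\crit{j}}\subseteq M$ with $\VV_{\crit{j}}\models\neg\psi_\varphi(X_0,j^{-1}(x))$; since $d_{\crit{j}}\in M$ and $h_1\in M$, the set $X_0$ itself lies in $\POT{\VV_{\crit{j}}}^M$ and is recoverable inside $M$. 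Applying the universal quantifier of $(\VV_{\crit{j}}\models\varphi(j^{-1}(x)))^M$ to $X_0$ yields $(\VV_{\crit{j}}\models\psi_\varphi(X_0,j^{-1}(x)))^M$, and then $\Sigma^1_1$-upward absoluteness from $M$ to $\VV$, which is available because $\VV_{\crit{j}}^M=\VV_{\crit{j}}$ and $\POT{\VV_{\crit{j}}}^M\subseteq\POT{\VV_{\crit{j}}}$, gives $\VV_{\crit{j}}\models\psi_\varphi(X_0,j^{-1}(x))$, the desired contradiction. The main technical obstacle I expect is the coordinated handling of the two layers of data encoded in $\vec{d}$: one must arrange that the subtlety-style coincidence $d_\alpha=d_{\crit{j}}\cap\alpha$ separately recovers both layers (which is why $C$ consists of ordinals closed under G\"odel pairing), and one must verify that the witness $X_0$ as recovered inside $M$ genuinely agrees with the intended $X_{\crit{j},\varphi}$ computed in $\VV$, which in turn relies on $\VV_{\crit{j}}\subseteq M$; once these are in place, the $\Sigma^1_1$-absoluteness step is routine.
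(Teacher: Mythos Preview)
Your approach is correct but takes a genuinely different route from the paper. The paper builds a single $\kappa$-list that handles inaccessibility of $\crit{j}$ directly (recording a short cofinal sequence at singular $\alpha$, so that $d_{\crit{j}}\in M$ forces regularity, while Lemma~\ref{lemma:ModelsContainInitialSegment} gives the strong limit property), and at each inaccessible $\alpha$ encodes \emph{one} witness $y_\alpha$ to \emph{one} failure $\neg\psi_\alpha(x\cap\VV_\alpha,y_\alpha)$, selected only when $\VV_\kappa\models\forall Z\,\psi_\alpha(x,Z)$. You instead obtain inaccessibility by invoking the subtle machinery of Lemmas~\ref{lemma:IneffableWitnessSubtle} and~\ref{Lemma:CritIndes} (getting $\crit{j}$ totally indescribable, which is more than needed), and you encode \emph{all} $\Pi^1_2$-counterexamples at each level. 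Your route is more modular, reusing existing lemmas rather than repeating their content; the paper's is more self-contained and encodes less data.

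A few small points to tighten: when you apply Lemma~\ref{lemma:IneffableWitnessSubtle}, take the club $C\cap C^{sub}$ rather than just $C^{sub}$, so that the resulting $\alpha$ is closed under G\"odel pairing and the two layers of $d_\alpha$ separate cleanly. To invoke Lemma~\ref{Lemma:CritIndes} as stated you also need $\vec{d}^{sub}\in\ran{j}$; either add $\vec{d}^{sub}$ to the data coded by $h$, or note that the proof of that lemma does not actually use this hypothesis. Finally, where you write ``$h_1\in M$'' you mean $j^{-1}(h_1)\in M$; the identification $j^{-1}(h_1)=h_1\restriction\VV_{\crit{j}}$ that you need then follows from $\VV_{\crit{j}}\subseteq M$ and $j\restriction\VV_{\crit{j}}=\id$, which you have already established.
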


\begin{proof}
Fix a bijection $\map{h}{\VV_\kappa}{\kappa}$ with $h[\VV_\alpha]=\alpha$ for every strong limit cardinal $\alpha<\kappa$ and a $\kappa$-list $\vec{d}=\seq{d_\alpha}{\alpha<\kappa}$ such that the following statements hold for all $\alpha<\kappa$: 
 \begin{enumerate}
  \item If $\alpha$ is  inaccessible, then $d_\alpha\neq\emptyset$ if and only if there is a $\Sigma^1_1$-formula $\psi_\alpha(v_0,v_1)$ and $\emptyset\ne y_\alpha\in\VV_{\alpha+1}$ with the property that $\VV_\kappa\models\forall Z ~ \psi_\alpha(x,Z)$ and $\VV_\alpha\models\neg\psi_\alpha(x\cap\VV_\alpha,y_\alpha)$. We let $d_\alpha=h[y_\alpha]$ in this case. 
  
  \item If $\alpha$ is a singular cardinal, then $d_\alpha$ is a cofinal subset of $\alpha$ of order-type $\cof{\alpha}$. 
  
  \item Otherwise, $d_\alpha=\emptyset$. 
 \end{enumerate}

Let $\theta$ be a sufficiently large cardinal and let $\map{j}{M}{\HH{\theta}}$ be a small embedding for $\kappa$ with $\vec{d},h,x\in\ran{j}$ and $d_{\crit{j}}\in M$. Then Lemma \ref{lemma:ModelsContainInitialSegment} implies that $\crit{j}$ is a strong limit cardinal. Since $\crit{j}$ is regular in $M$, our definition of $\vec{d}$ ensures that $\crit{j}$ is inaccessible. Then our assumptions imply that $j^{{-}1}(x)=x\cap\VV_{\crit{j}}\in M$.  

Assume that there is a $\Pi^1_2$-formula $\varphi(v)$ with $(\VV_\crit{j}\models\varphi(x\cap\VV_{\crit{j}}))^M$ and $\VV_{\crit{j}}\models\neg\varphi(x\cap\VV_{\crit{j}})$. Then elementarity implies that $\VV_\kappa\models\varphi(x)$ holds, and this allows us to conclude that the set $d_{\crit{j}}$ is not empty, $\VV_\kappa\models\forall Z ~ \psi_{\crit{j}}(x,Z)$ and $\VV_{\crit{j}}\models\neg\psi_{\crit{j}}(x\cap\VV_{\crit{j}},y_{\crit{j}})$. Since $d_{\crit{j}}\in M$ and $h\in\ran{j}$, we obtain that $y_{\crit{j}}\in M$, and elementarity implies that $(\VV_\crit{j}\models\psi_{\crit{j}}(x\cap\VV_{\crit{j}},y_{\crit{j}}))^M$. Then Lemma \ref{lemma:ModelsContainInitialSegment} shows that $\VV_{\crit{j}}\subseteq M$ and we can apply $\Sigma^1_1$-upwards absoluteness to conclude that $\VV_{\crit{j}}\models\psi_{\crit{j}}(x\cap\VV_{\crit{j}},y_{\crit{j}})$, a contradiction.  
\end{proof}

The above small embedding characterization of ineffable cardinals can easily be modified to produce such characterizations for the generalizations of ineffability studied by Menachem Magidor in \cite{MR0327518}. Remember that, given a regular uncountable cardinal $\kappa$ and a cardinal $\lambda\geq\kappa$, the cardinal $\kappa$ is \emph{$\lambda$-ineffable} if for every $\Poti{\lambda}{\kappa}$-list $\vec{d}=\seq{d_a}{a\in\Poti{\lambda}{\kappa}}$, there exists a subset $D$ of $\lambda$ such that the set $\Set{a\in\Poti{\lambda}{\kappa}}{d_a=D\cap a}$ is stationary in $\Poti{\lambda}{\kappa}$. Since $\kappa$ is a club in $\Poti{\kappa}{\kappa}$ for every uncountable regular cardinal $\kappa$, it is easy to see that a cardinal $\kappa$ is ineffable if and only if it is $\kappa$-ineffable.  The small embedding characterization of ineffability provided by Lemma \ref{lemma:IneffableSmallChar} now generalizes to $\lambda$-ineffability as follows. Note that requiring $\delta<\kappa$ below should be seen as a smallness requirement of the domain model $M$ of the embedding. It can be read off from the proof below that we could equivalently require that $|M|$ be less than $\kappa$.

\begin{lemma}\label{lemma:LambdaIneffableSmallChar}
The following statements are equivalent for all cardinals $\kappa\leq\lambda$: 
  \begin{enumerate}[leftmargin=0.7cm]
    \item $\kappa$ is $\lambda$-ineffable.
   

    \item For all sufficiently large cardinals $\theta$ and every $\Poti{\lambda}{\kappa}$-list $\vec{d}=\seq{d_a}{a\in\Poti{\lambda}{\kappa}}$, there is a small embedding $\map{j}{M}{\HH{\theta}}$ for $\kappa$ and $\delta\in M\cap\kappa$ such that $j(\delta)=\lambda$, $\vec{d}\in\ran{j}$ and $j^{{-}1}[d_{j[\delta]}]\in M$.
\end{enumerate}
\end{lemma}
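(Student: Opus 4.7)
The plan is to follow the strategy of Lemma \ref{lemma:IneffableSmallChar}, with $j[\delta]$ playing the role formerly played by $\crit{j}$. For (i) $\Rightarrow$ (ii), given $\vec{d}$ and sufficiently large $\theta$, I would use the $\lambda$-ineffability of $\kappa$ to find $D\subseteq\lambda$ with $S=\{a\in\Poti{\lambda}{\kappa}:d_a=D\cap a\}$ stationary. A standard chain-of-elementary-substructures argument yields $X\prec \HH{\theta}$ of size less than $\kappa$ with $\vec{d},D,\lambda\in X$ and $X\cap\lambda\in S$. Let $\pi\colon X\to M$ be the transitive collapse and set $j=\pi^{-1}$, $\delta=\pi(\lambda)$. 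Then $\delta\in M\cap\kappa$, $j(\delta)=\lambda$ and $\vec{d}\in\ran{j}$; since $\pi$ restricts to an order-isomorphism $X\cap\lambda\to\delta$, we have $j[\delta]=X\cap\lambda\in S$, so $d_{j[\delta]}=D\cap(X\cap\lambda)=D\cap X$, giving $j^{-1}[d_{j[\delta]}]=\pi(D)\in M$.

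For (ii) $\Rightarrow$ (i), fix a $\Poti{\lambda}{\kappa}$-list $\vec{d}$, suppose toward a contradiction that no $D\subseteq\lambda$ makes $\{a:d_a=D\cap a\}$ stationary, and pick a sufficiently large $\theta$ with a corresponding small embedding $j\colon M\to\HH{\theta}$ from (ii). Let $D_0=j^{-1}[d_{j[\delta]}]\in M$ and set $D=j(D_0)$. Using $d_{j[\delta]}\subseteq j[\delta]$ one verifies that $d_{j[\delta]}=j[D_0]=j(D_0)\cap j[\delta]=D\cap j[\delta]$, so $j[\delta]$ lies in $S_D:=\{a\in\Poti{\lambda}{\kappa}:d_a=D\cap a\}$. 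All parameters $\vec{d},D,\lambda,\kappa$ lie in $\ran{j}$, so the failure of stationarity of $S_D$ reflects through $j$: there is $C_0\in M$ that $M$ sees as a club in $\Poti{\delta}{\crit{j}}$ disjoint from the corresponding internal set. Fix $f_0\in M$, $f_0\colon[\delta]^{<\omega}\to\delta$, whose closure points in $\Poti{\delta}{\crit{j}}^M$ all lie in $C_0$.

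The main technical step is now to show that $j[\delta]\in j(C_0)$. Any $s\in[j[\delta]]^{<\omega}$ has the form $s=j(s_0)$ for some $s_0\in[\delta]^{<\omega}$, so elementarity gives $j(f_0)(s)=j(f_0(s_0))\in j[\delta]$; hence $j[\delta]$ is closed under $j(f_0)$, and since $|j[\delta]|=|\delta|<\kappa$, applying elementarity to the closure-point characterization of $C_0$ via $f_0$ yields $j[\delta]\in j(C_0)$. This contradicts $j[\delta]\in S_D$. The main obstacle is recognizing this reflection role of $j[\delta]$: just as $\crit{j}$ is automatically a limit point of the image of any unbounded $C_0\in M$ in the ineffable case, here $j[\delta]$ is automatically closed under $j(f_0)$ for each $f_0\in M$, and so lies in every image club $j(C_0)$.
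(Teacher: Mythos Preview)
Your proposal is correct and follows essentially the same approach as the paper's proof: for (i)$\Rightarrow$(ii) you build $X\prec\HH{\theta}$ with $X\cap\lambda\in S$ and collapse (the paper also explicitly records $X\cap\kappa\in\kappa$, which you should mention to ensure $j$ is a small embedding for $\kappa$), and for (ii)$\Rightarrow$(i) you set $D=j(j^{-1}[d_{j[\delta]}])$ and show $j[\delta]$ lies in the image of any club from $M$, exactly as the paper does. The only cosmetic difference is that the paper phrases clubs directly via a function $\map{f}{\Poti{\lambda}{\omega}}{\Poti{\lambda}{\kappa}}$ and its closure set $\clo{f}$, while you first pull back a club $C_0$ and then pick $f_0$ generating it; the verification that $j[\delta]\in\clo{j(f_0)}$ is identical in both.
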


\begin{proof}
 Assume first that $\kappa$ is $\lambda$-ineffable. Fix a $\Poti{\lambda}{\kappa}$-list $\vec{d}=\seq{d_a}{a\in\Poti{\lambda}{\kappa}}$ and a cardinal $\theta$ with $\Poti{\lambda}{\kappa}\in\HH{\theta}$. Then the $\lambda$-ineffability of $\kappa$ yields a subset $D$ of $\lambda$  such that the $S=\Set{a\in\Poti{\kappa}{\lambda}}{d_a=D\cap a}$ is stationary in $\Poti{\lambda}{\kappa}$. In this situation, we can find $X\prec\HH{\theta}$ of cardinality less than $\kappa$ such that $\vec{d},D\in X$, $X\cap\kappa\in\kappa$ and $X\cap\lambda\in S$. Let $\map{\pi}{X}{M}$ denote the corresponding transitive collapse. Then $\pi(\lambda)<\kappa$ and $\map{\pi^{{-}1}}{M}{\HH{\theta}}$ is a small embedding for $\kappa$ with $\vec{d}\in\ran{\pi^{{-}1}}$. Moreover, we have $$\pi[d_{\pi^{{-}1}[\pi(\lambda)]}] ~ = ~ \pi[d_{X\cap\lambda}] ~ = ~ \pi[D\cap X] ~ = ~ \pi(D) ~ \in M.$$ 
 
Now, assume that (ii) holds, and let $\vec{d}=\seq{d_a}{a\in\Poti{\lambda}{\kappa}}$ be a $\Poti{\lambda}{\kappa}$-list. Pick a small embedding $\map{j}{M}{\HH{\theta}}$ for $\kappa$ and $\delta\in M\cap\kappa$ with $j(\delta)=\lambda$, $\vec{d}\in\ran{j}$ and $d=j^{{-}1}[d_{j[\delta]}]\in\POT{\delta}^M$. We define $S=\Set{a\in\Poti{\lambda}{\kappa}}{d_a=j(d)\cap a}\in\ran{j}$. Assume for a contradiction that the set $S$ is not stationary in $\Poti{\lambda}{\kappa}$. Then there is a function $\map{f}{\Poti{\lambda}{\omega}}{\Poti{\lambda}{\kappa}}$ with $\clo{f}\cap S=\emptyset$, where $\clo{f}$ denotes the set of all $a\in\Poti{\lambda}{\kappa}$ with $f(b)\subseteq a$ for all $b\in\Poti{a}{\omega}$. Since $S\in\ran{j}$, elementarity yields a function $\map{f_0}{\Poti{\delta}{\omega}}{\Poti{\delta}{\crit{j}}}$ in $M$ with $\clo{j(f_0)}\cap S=\emptyset$. Pick $b\in\Poti{j[\delta]}{\omega}$. Then $b\in\ran j$, and hence $j^{{-}1}(b)=j^{{-}1}[b]\in M$, and there is $a\in\rm{Cl}_{f_0}^M$ with $j^{{-}1}[b]\subseteq a\in\Poti{\delta}{\crit{j}}^M$. In this situation, we have $j(f_0)(b)=j(f_0(j^{{-}1}[b]))\subseteq j(a)=j[a]\subseteq j[\delta]$. These computations show that $j[\delta]\in\clo{j(f_0)}$. But we also have $j(d)\cap j[\delta]=j[d]=d_{j[\delta]}$, and this shows that $j[\delta]\in\clo{j(f_0)}\cap S$, a contradiction. 
\end{proof}

It is easy to see that small embeddings witnessing certain degrees of ineffability also witness all smaller degrees.

\begin{proposition}\label{proposition:DownwardsLambdaIneffable}
 Let $\kappa$ be a $\lambda$-ineffable cardinal, let $\kappa\leq\lambda_0<\lambda$ be a cardinal and let $\vec{d}=\seq{d_a}{a\in\Poti{\lambda_0}{\kappa}}$ be a $\Poti{\lambda_0}{\kappa}$-list.  If $\theta$ is a  sufficiently large cardinal such that there is a small embedding $\map{j}{M}{\HH{\theta}}$ for $\kappa$ witnessing the $\lambda$-ineffability of $\kappa$ with respect to $\seq{d_{a\cap\lambda_0}}{a\in\Poti{\lambda}{\kappa}}$, as in statement (ii) of Lemma \ref{lemma:LambdaIneffableSmallChar}, then $\vec{d}\in\ran{j}$ implies that $j$ also witnesses the $\lambda_0$-ineffability of $\kappa$ with respect to $\vec{d}$ in this way. \qed
\end{proposition}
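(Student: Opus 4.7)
The plan is to show directly that the given small embedding $j: M \to \HH{\theta}$, which witnesses $\lambda$-ineffability of $\kappa$ with respect to $\vec e := \seq{d_{a \cap \lambda_0}}{a \in \Poti{\lambda}{\kappa}}$ in the sense of Lemma \ref{lemma:LambdaIneffableSmallChar}(ii), already satisfies all the clauses required by Lemma \ref{lemma:LambdaIneffableSmallChar}(ii) for $\vec d$ in place of $\vec e$ and $\lambda_0$ in place of $\lambda$. By assumption $\vec d \in \ran j$, so the third clause of the conclusion is immediate. The remaining work is to produce an appropriate $\delta_0 \in M \cap \kappa$ with $j(\delta_0) = \lambda_0$, and to verify that $j^{-1}[d_{j[\delta_0]}] \in M$.

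First I would extract $\delta_0$. From $\vec d \in \ran j$ one reads off $\dom \vec d = \Poti{\lambda_0}{\kappa} \in \ran j$, and hence $\lambda_0 = \bigcup \Poti{\lambda_0}{\kappa} \in \ran j$; so there is a (unique) $\delta_0 \in M$ with $j(\delta_0) = \lambda_0$. By elementarity, $\lambda_0 < \lambda = j(\delta)$ gives $\delta_0 < \delta < \kappa$, so $\delta_0 \in M \cap \kappa$ as required.

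The key step is the identity
\[
  j[\delta] \cap \lambda_0 \;=\; j[\delta_0].
\]
Indeed, for $\alpha < \delta$ in $M$, elementarity of $j$ gives $j(\alpha) < \lambda_0 = j(\delta_0)$ if and only if $\alpha < \delta_0$, so the intersection on the left is exactly $\{j(\alpha) : \alpha < \delta_0\}$. Using this identity and the definition of $\vec e$, we get
\[
  e_{j[\delta]} \;=\; d_{j[\delta] \cap \lambda_0} \;=\; d_{j[\delta_0]}.
\]
Therefore $j^{-1}[d_{j[\delta_0]}] = j^{-1}[e_{j[\delta]}]$, and the latter lies in $M$ by the hypothesis that $j$ witnesses $\lambda$-ineffability of $\kappa$ with respect to $\vec e$. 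This verifies the remaining clause, and the proposition follows.

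There is no real obstacle; the only point requiring care is to extract $\delta_0$ with $j(\delta_0) = \lambda_0$ without separately assuming $\lambda_0 \in \ran j$, which is why I noted that $\lambda_0$ is definable from $\Poti{\lambda_0}{\kappa}$ and hence from $\vec d$. After that the computation of $j[\delta]\cap\lambda_0$ is a routine application of elementarity.
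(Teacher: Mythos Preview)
Your proof is correct and is precisely the routine verification the paper leaves implicit (the proposition is stated with an immediate \qed\ and no proof). Your observation that $\lambda_0=\bigcup\dom{\vec d}$, and hence automatically lies in $\ran j$ once $\vec d$ does, is a nice touch that makes the argument self-contained; the remaining computation $j[\delta]\cap\lambda_0=j[\delta_0]$ and the resulting identity $e_{j[\delta]}=d_{j[\delta_0]}$ is exactly what is needed.
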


The following result uses ideas from \cite{MR2838054} and \cite{MR2959668} to derive a strengthening of Lemma \ref{lemma:ModelsContainInitialSegment} for many small embeddings witnessing $\lambda$-ineffability, that we will make use of in Section \ref{section:ConsInternal} below.

\begin{lemma}\label{lemma:LambdaIneffableClosureProperties}
 Let $\kappa$ be a $\lambda$-ineffable cardinal. If $\lambda=\lambda^{{<}\kappa}$, then there is a $\Poti{\lambda}{\kappa}$-list $\vec{d}$ and a set $x$ with the property that whenever $\theta$ is a sufficiently large cardinal such that there is a small embedding $\map{j}{M}{\HH{\theta}}$ for $\kappa$ and $\delta\in M\cap\kappa$ witnessing the $\lambda$-ineffability of $\kappa$ with respect to $\vec{d}$, as in statement (ii) of Lemma \ref{lemma:LambdaIneffableSmallChar}, then $x\in\ran{j}$ implies that $\crit j$ is an inaccessible cardinal and $\Poti{\delta}{\crit{j}}\subseteq M$. 
\end{lemma}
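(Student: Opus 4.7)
Using the assumption $\lambda=\lambda^{<\kappa}$, which yields $|\Poti{\lambda}{\kappa}|=\lambda$, I plan to fix in $V$ a bijection $\map{e}{\lambda}{\Poti{\lambda}{\kappa}}$ and a bijection $\map{\pi}{\lambda}{\lambda\times\lambda}$, and to take $x$ to be (a pair encoding) $e$ and $\pi$. For any witness $\map{j}{M}{\HH{\theta}}$ as in Lemma \ref{lemma:LambdaIneffableSmallChar}(ii) with $x\in\ran{j}$, elementarity produces $e_0:=j^{-1}(e)\in M$, which is an $M$-bijection $\delta\to\Poti{\delta}{\crit{j}}^M$, and $\pi_0:=j^{-1}(\pi)\in M$, an $M$-bijection $\delta\to\delta\times\delta$.

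The $\Poti{\lambda}{\kappa}$-list $\vec{d}$ will be constructed from $e$ and $\pi$ so that the witness condition $j^{-1}[d_{j[\delta]}]\in M$ forces $j[\delta]$ to be closed under $e^{-1}$ on $\Poti{j[\delta]}{\crit{j}}^V$; explicitly, so that $e^{-1}(b)\in j[\delta]$ for every $b\subseteq j[\delta]$ of $V$-cardinality less than $\crit{j}$. This is the main obstacle in the proof, and is where I intend to employ the ideas of Wei\ss\ and Viale--Wei\ss\ from \cite{MR2838054}, \cite{weissthesis}, and \cite{MR2959668}: the list $\vec{d}$ is to encode, via $\pi$, a sufficiently slender coding of the enumeration $e$ restricted to subsets of each level, so that the single subset of $\delta$ extracted in $M$ from the witness condition suffices to force the desired closure of $j[\delta]$.

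Granted this closure of $j[\delta]$, the inclusion $\Poti{\delta}{\crit{j}}\subseteq M$ follows readily. For any $b\in\Poti{\delta}{\crit{j}}^V$, the set $j[b]$ lies in $\Poti{j[\delta]}{\crit{j}}^V$, so $e^{-1}(j[b])\in j[\delta]$, say $e^{-1}(j[b])=j(\beta)$ with $\beta<\delta$. Using $|e_0(\beta)|^M<\crit{j}$, we compute $j[b]=e(j(\beta))=j(e_0(\beta))=j[e_0(\beta)]$, and the injectivity of $j$ then yields $b=e_0(\beta)\in M$.

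Inaccessibility of $\crit{j}$ in $V$ will follow from the closure. By elementarity, since $j(\crit{j})=\kappa$ is $\lambda$-ineffable (hence inaccessible) in $V$, the ordinal $\crit{j}$ is inaccessible in $M$. For regularity in $V$: were $\crit{j}$ singular in $V$, a cofinal sequence of $V$-length less than $\crit{j}$, coded via $\pi_0$ as a subset of $\delta$ of $V$-size less than $\crit{j}$, would lie in $\Poti{\delta}{\crit{j}}^V\subseteq M$, placing the cofinal sequence inside $M$ and contradicting regularity of $\crit{j}$ in $M$. For the strong-limit property: for $\nu<\crit{j}$, $\POT{\nu}^V\subseteq\Poti{\delta}{\crit{j}}^V\subseteq M$, so $\POT{\nu}^V=\POT{\nu}^M$, which has $M$-cardinality less than $\crit{j}$ by elementarity, and this cardinality is preserved to $V$ via the witnessing bijection in $M$.
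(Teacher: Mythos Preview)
Your reductions in the last two paragraphs are correct: once $\Poti{\delta}{\crit{j}}^{\VV}\subseteq M$ is established, the pullback via $e_0$ and the deduction of inaccessibility of $\crit{j}$ in $\VV$ both go through exactly as you describe.

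The genuine gap is your second paragraph, and you flag it yourself (``this is the main obstacle''). You propose that $d_a$ should ``encode, via $\pi$, a sufficiently slender coding of the enumeration $e$ restricted to subsets of each level'' so that $j^{-1}[d_{j[\delta]}]\in M$ directly forces $e^{-1}(b)\in j[\delta]$ for \emph{every} $b\in\Poti{j[\delta]}{\crit{j}}^{\VV}$. But a single subset $d_a\subseteq a$ cannot carry this much information: you do not yet know that $a\cap\kappa$ is inaccessible in $\VV$, and even granting that, there is no reason the $\VV$-cardinality of $\Poti{a}{a\cap\kappa}$ should be bounded by $\betrag{a}$. The reference to slenderness does not help, since $\vec{d}$ is a list you get to choose freely and no slenderness hypothesis is in play.

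The paper does not attempt a direct encoding. It runs a contradiction argument: with the bijection $f\colon\Poti{\lambda}{\kappa}\to\lambda$ fixed and $C,\vec{e}$ taken from Lemma~\ref{Lemma:CritIndes}, one lets $A$ be the set of $a$ for which some elementary $X_a\prec\HH{\vartheta_a}$ with $X_a\cap\lambda=a$, $f\in X_a$, $X_a\cap\kappa\in C$ inaccessible has $\Poti{a}{X_a\cap\kappa}\not\subseteq X_a$; for such $a$ one sets $d_a$ equal to a \emph{single} chosen witness $x_a\in\Poti{a}{X_a\cap\kappa}\setminus X_a$, and $d_a=e_{a\cap\kappa}$ otherwise. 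If $j$ witnesses $\lambda$-ineffability for this $\vec{d}$ with $f,\vec{e},C\in\ran{j}$ but the conclusion fails, then either $j[\delta]\notin A$, in which case Lemmas~\ref{Lemma:CritIndes} and~\ref{lemma:IneffableWitnessSubtle} make $\crit{j}$ inaccessible and $j[M]$ itself witnesses $j[\delta]\in A$; or $j[\delta]\in A$, in which case $j^{-1}[x_{j[\delta]}]\in M$ yields $x_{j[\delta]}\in j[M]$, hence $f(x_{j[\delta]})\in j[\delta]\subseteq X_{j[\delta]}$ and $x_{j[\delta]}\in X_{j[\delta]}$, a contradiction. The point is that only one small subset is ever coded into $d_a$, not all of them; this is the idea your plan is missing.
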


\begin{proof}
 Fix a bijection $\map{f}{\Poti{\lambda}{\kappa}}{\lambda}$. Then Lemma \ref{Lemma:CritIndes} yields a club $C$ in $\kappa$ and a $\kappa$-list $\vec{e}=\seq{e_\alpha}{\alpha<\kappa}$ with the property that whenever $\theta$ is a sufficiently large cardinal such that there is a small embedding $\map{j}{M}{\HH{\theta}}$ for $\kappa$ witnessing the subtlety of $\kappa$ with respect to $\vec{e}$ and $C$ as in statement (ii) of Lemma \ref{Lemma:CritIndes}, then $\crit{j}$ is an inaccessible cardinal.  
 
 Let $A$ denote the set of all $a\in\Poti{\lambda}{\kappa}$ with the property that there is a cardinal $\vartheta_a>\lambda$ and an elementary submodel $X_a$ of $\HH{\vartheta_a}$ such that $f\in X_a$, $\alpha_a=X_a\cap\kappa\in C$ is inaccessible and $\Poti{X_a\cap\lambda}{\alpha_a}\nsubseteq X_a$. Given $a\in A$, pick $x_a\in\Poti{X_a\cap\lambda}{\alpha_a}\setminus X_a$. Next, let $\vec{d}=\seq{d_a}{a\in\Poti{\lambda}{\kappa}}$ denote the unique $\Poti{\lambda}{\kappa}$-list such that $d_a=x_a$ for all $a\in A$, $d_a=e_{a\cap\kappa}$ for all $a\in\Poti{\lambda}{\kappa}\setminus A$ with $a\cap\kappa\in C$, and $d_a=\emptyset$ otherwise. 
  
 Now, let $\theta$ be a sufficiently large cardinal such that there is a small embedding $\map{j}{M}{\HH{\theta}}$ and $\delta\in M\cap\kappa$ witnessing the $\lambda$-ineffability of $\kappa$ with respect to $\vec{d}$, as in statement (ii) of Lemma \ref{lemma:LambdaIneffableSmallChar}, such that $f$, $\vec{e}$ and $C$ are contained in $\ran{j}$. Assume for a contradiction that either $\crit{j}$ is not inaccessible or $\Poti{\delta}{\crit{j}}\nsubseteq M$. 
 
Next, assume also that $j[\delta]\notin A$. Since $j[\delta]\cap\kappa=\crit{j}\in C$, $j^{{-}1}[d_{j[\delta]}]\in M$ implies that $e_{\crit{j}}\in M$. In this situation, the combination of Lemma \ref{Lemma:CritIndes} and Lemma \ref{lemma:IneffableWitnessSubtle} yields that $\crit{j}=j[M]\cap\kappa$ is inaccessible. Since our assumptions imply that $\Poti{j[M]\cap\lambda}{\crit{j}}\nsubseteq j[M]$, we can conclude that $j[M]$ witnesses that $j[\delta]\in A$, a contradiction. 
 
Hence $j[\delta]\in A$. Since we know that $\crit{j}=\alpha_{j[\delta]}$ and $j^{{-}1}[d_{j[\delta]}]\in M$, we know that $x_{j[\delta]}\in j[M]$. But this allows us to conclude that $f(x_{j[\delta]})\in \lambda\cap j[M]\subseteq X_{j[\delta]}$ and hence $x_{j[\delta]}\in X_{j[\delta]}$, a contradiction. 
\end{proof}


The small embedding characterization of $\lambda$-ineffable cardinals suggests a natural strengthening of $\lambda$-ineffability that arises from a modification of the quantifiers that appear in Statement (ii) of Lemma \ref{lemma:LambdaIneffableSmallChar}.

\begin{definition}\label{superineffable}
 Given cardinals $\kappa\leq\lambda$, the cardinal $\kappa$ is \emph{$\lambda$-superineffable} if for all sufficiently large cardinals $\theta$, there is a small embedding $\map{j}{M}{\HH{\theta}}$ for $\kappa$ and $\delta\in M\cap\kappa$ with the property that $j(\delta)=\lambda$ and $j^{{-}1}[d_{j[\delta]}]\in M$ holds for every $\Poti{\lambda}{\kappa}$-list $\vec{d}=\seq{d_a}{a\in\Poti{\lambda}{\kappa}}$ with $\vec{d}\in\ran{j}$.  
\end{definition}

\begin{proposition}\label{proposition:SuperineffableIneffable}
 Assume that $\kappa$ is $\lambda$-superineffable. Then amongst the embeddings witnessing the $\lambda$-superineffability of $\kappa$ are embeddings witnessing that $\kappa$ is $\lambda$-ineffable and $\lambda_0$-superineffable for all cardinals $\kappa\leq\lambda_0<\lambda$.  
\end{proposition}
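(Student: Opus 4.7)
The plan is to show that a single small embedding witnessing $\lambda$-superineffability, once arranged to contain the relevant parameters in its range, automatically witnesses all the desired weaker properties.

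First, for $\lambda$-ineffability: given a $\Poti{\lambda}{\kappa}$-list $\vec d$ and a sufficiently large cardinal $\theta$, I would invoke Lemma \ref{lemma:addxtorange} (using the essentially trivial verification, promised by the paper, that the correctness property defining $\lambda$-superineffability is downwards-absolute: if $\nu>\crit j$ is a cardinal of $M$ with $\lambda\in j[\HH{\nu}^M]$, then $j\restriction\HH{\nu}^M$ inherits the required property since $j^{-1}[d_{j[\delta]}]\subseteq\delta<\crit{j}<\nu$) to produce a small embedding $\map{j}{M}{\HH{\theta}}$ witnessing the $\lambda$-superineffability of $\kappa$ with $\vec d\in\ran j$. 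By Definition \ref{superineffable} applied to $\vec d$, we then have $j^{-1}[d_{j[\delta]}]\in M$, so $j$ witnesses the $\lambda$-ineffability of $\kappa$ with respect to $\vec d$ via Lemma \ref{lemma:LambdaIneffableSmallChar}(ii).

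Next, for $\lambda_0$-superineffability with $\kappa\leq\lambda_0<\lambda$: for sufficiently large $\theta$, use Lemma \ref{lemma:addxtorange} once more to obtain a small embedding $\map{j}{M}{\HH{\theta}}$ with $\delta\in M\cap\kappa$ witnessing the $\lambda$-superineffability of $\kappa$, arranged so that $\lambda_0\in\ran j$. Set $\delta_0=j^{-1}(\lambda_0)$; since $j$ is order-preserving and $\lambda_0<\lambda=j(\delta)$, we get $\delta_0<\delta<\kappa$, hence $\delta_0\in M\cap\kappa$ and $j(\delta_0)=\lambda_0$. Now let $\vec d=\seq{d_a}{a\in\Poti{\lambda_0}{\kappa}}$ be any $\Poti{\lambda_0}{\kappa}$-list with $\vec d\in\ran j$, and lift it to the $\Poti{\lambda}{\kappa}$-list $\vec d\,'=\seq{d_{a\cap\lambda_0}}{a\in\Poti{\lambda}{\kappa}}$, which likewise lies in $\ran j$ since $\vec d,\lambda_0\in\ran j$. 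The key computation, using that $j$ is order-preserving and $\delta_0<\delta$, is
\[
j[\delta]\cap\lambda_0 \;=\; j[\delta]\cap j(\delta_0) \;=\; j[\delta_0],
\]
so $d\,'_{j[\delta]}=d_{j[\delta_0]}$. Applying the $\lambda$-superineffability of $j$ to the list $\vec d\,'$ yields $j^{-1}[d_{j[\delta_0]}]=j^{-1}[d\,'_{j[\delta]}]\in M$, which is exactly what is needed for $\delta_0$ to witness $\lambda_0$-superineffability via $j$.

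The main obstacle, and the only place where care is needed, is the verification that $j[\delta]\cap\lambda_0=j[\delta_0]$ and that the list $\vec d\,'$ indeed lies in $\ran j$; both reduce to the order-preservation of $j$ together with the fact that $\lambda_0,\vec d\in\ran j$. The downwards-absoluteness check needed to invoke Lemma \ref{lemma:addxtorange} is itself routine, as all the relevant data $\delta$ and the witnessing sets $j^{-1}[d_{j[\delta]}]\subseteq\delta$ are well below $\crit j$ and thus persist under restriction to sufficiently large $\HH{\nu}^M$.
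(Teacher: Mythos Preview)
Your proposal is correct and follows essentially the same approach as the paper's own proof: for $\lambda$-ineffability you invoke Lemma~\ref{lemma:addxtorange} via the downwards-absoluteness of the superineffability property to capture the given list in the range, and for $\lambda_0$-superineffability you use the lifted list $\vec d\,'=\seq{d_{a\cap\lambda_0}}{a\in\Poti{\lambda}{\kappa}}$, which is exactly the ``small modification of Proposition~\ref{proposition:DownwardsLambdaIneffable}'' that the paper alludes to. Your write-up is in fact more detailed than the paper's two-sentence sketch, and the key computation $j[\delta]\cap\lambda_0=j[\delta_0]$ is handled correctly.
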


\begin{proof}
 The first statement follows directly from Lemma \ref{lemma:addxtorange}, because the correctness  property used in Definition \ref{superineffable} is easily seen to be downwards-absolute, and therefore Lemma \ref{lemma:addxtorange} allows us to capture any $\Poti{\lambda}{\kappa}$-list in the range of such a small embedding. The second statement follows from a small modification of Proposition \ref{proposition:DownwardsLambdaIneffable}, showing that every small embedding $\map{j}{M}{\HH{\theta}}$ for $\kappa$ witnessing the $\lambda$-superineffability of  $\kappa$ also witnesses the  $\lambda_0$-superineffability of  $\kappa$ for every cardinal $\kappa\leq\lambda_0<\lambda$ in $\ran{j}$. 
\end{proof}

Corollary \ref{corollary:SupercompactMagidorStyle} directly  provides us with examples of $\lambda$-superineffable cardinals. We will later improve the following implication by showing that small embeddings witnessing $\lambda$-supercompactness also witness $\lambda$-superineffability (see Lemma \ref{lemma:measurablesmallembeddingconsequences} and Statement (iii) of Lemma \ref{lemma:ImplicationsSECLambdaSupercompact}), hence in particular $\lambda$-supercompact cardinals are $\lambda$-superineffable.

\begin{corollary}\label{corollary:SupercompactSuperineffable}
  Let $\kappa$ be a supercompact cardinal. If $\theta$ is a sufficiently large cardinal such that there is a small embedding $\map{j}{M}{\HH{\theta}}$ for $\kappa$ witnessing the supercompactness of $\kappa$, as in statement (ii) of Corollary \ref{corollary:SupercompactMagidorStyle}, then $j$ witnesses the $\lambda$-superineffability of $\kappa$ for every cardinal $\lambda\geq\kappa$ with $\lambda\in\ran{j}$. \qed
\end{corollary}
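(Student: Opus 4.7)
The setup is that we are handed a small embedding $\map{j}{M}{\HH{\theta}}$ for $\kappa$ with $M=\HH{\delta}$ for some cardinal $\delta<\kappa$ (as in Corollary~\ref{corollary:SupercompactMagidorStyle}), and a cardinal $\lambda\geq\kappa$ with $\lambda\in\ran{j}$. My plan is to directly verify Definition~\ref{superineffable} for $j$, with no further appeals to the supercompactness of $\kappa$: the special shape $M=\HH{\delta}$ will do all the work.

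First, I would locate the candidate $\delta'$. Since $\lambda\in\ran{j}$, pick $\delta'\in M$ with $j(\delta')=\lambda$. Because $\lambda\geq\kappa=j(\crit{j})$ and $\lambda$ is a cardinal, elementarity makes $\delta'$ a cardinal in $M$ with $\delta'\geq\crit{j}$. Since every ordinal in $M=\HH{\delta}$ is below the cardinal $\delta<\kappa$, we have $\delta'\in M\cap\kappa$, which is the requirement of Definition~\ref{superineffable} on $\delta'$.

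Next, fix an arbitrary $\Poti{\lambda}{\kappa}$-list $\vec{d}=\seq{d_a}{a\in\Poti{\lambda}{\kappa}}$ with $\vec{d}\in\ran{j}$. Note that $j[\delta']\in\Poti{\lambda}{\kappa}$, since $j[\delta']\subseteq j(\delta')=\lambda$ and $\betrag{j[\delta']}=\betrag{\delta'}<\kappa$, so $d_{j[\delta']}$ is defined and is a subset of $j[\delta']$. Consequently $j^{-1}[d_{j[\delta']}]$ is a subset of $\delta'$. Now the key observation: because $\delta$ is a cardinal and $\delta'<\delta$, any subset of $\delta'$ has transitive closure of size at most $\delta'<\delta$, and is therefore automatically an element of $\HH{\delta}=M$. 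In particular, $j^{-1}[d_{j[\delta']}]\in M$, as required.

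There is really no obstacle here; the argument is essentially a bookkeeping exercise. The only mild subtlety worth flagging in the writeup is the verification that $\delta'\geq\crit{j}$ (so that $j[\delta']$ is genuinely of size $\betrag{\delta'}$ rather than collapsing to an ordinal), and the elementary fact that $\HH{\delta}$ contains every subset of every ordinal below $\delta$. Both are immediate, so the entire proof collapses to the two observations above.
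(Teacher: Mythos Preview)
Your argument is correct and is precisely the direct verification the paper has in mind; the corollary carries a \qed with no written proof because the fact that $M=\HH{\delta}$ absorbs every subset of every ordinal below $\delta$ makes the conclusion immediate. One small remark: your aside that $\delta'\geq\crit{j}$ is needed ``so that $j[\delta']$ is genuinely of size $\betrag{\delta'}$'' is unnecessary, since $j$ is injective and hence $\betrag{j[\delta']}=\betrag{\delta'}$ in any case; the inequality $\delta'\geq\crit{j}$ plays no role in the argument and can simply be omitted.
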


The results of \cite{MR0327518} show that a cardinal $\kappa$ is supercompact if and only if it is $\lambda$-ineffable for every cardinal $\lambda\geq\kappa$. By combining this result with Lemma \ref{lemma:LambdaIneffableSmallChar}, Proposition \ref{proposition:SuperineffableIneffable} and Corollary \ref{corollary:SupercompactSuperineffable}, we obtain the following alternative small embedding characterizations of supercompactness.

\begin{corollary} 
  The following statements are equivalent for every cardinal $\kappa$: 
 \begin{enumerate}[leftmargin=0.7cm]
  \item $\kappa$ is supercompact. 
   \item For all cardinals $\lambda\ge\kappa$, all sufficiently large cardinals $\theta$ and every $\Poti{\lambda}{\kappa}$-list $\vec d=\seq{d_a}{a\in\Poti{\lambda}{\kappa}}$, there is a small embedding $\map{j}{M}{\HH{\theta}}$ for $\kappa$ such that $\betrag{M}<\kappa$, $\lambda,\vec{d}\in\ran{j}$, and $j^{{-}1}[d_{\ran{j}\cap\lambda}]\in M$. 
  \item For all sufficiently large cardinals $\theta$, there is a small embedding $\map{j}{M}{\HH{\theta}}$ for $\kappa$ such that $\betrag{M}<\kappa$ and $j^{{-}1}[d_{\ran{j}\cap\lambda}]\in M$ for every cardinal $\lambda\geq\kappa$ and every $\Poti{\lambda}{\kappa}$-list $\vec{d}=\seq{d_a}{a\in\Poti{\lambda}{\kappa}}$ with $\lambda,\vec{d}\in\ran{j}$. \qed
 \end{enumerate}
\end{corollary}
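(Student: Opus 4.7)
My plan is to establish the cycle (i)$\Rightarrow$(iii)$\Rightarrow$(ii)$\Rightarrow$(i), since each link invokes a different tool already established earlier in the paper. For (ii)$\Rightarrow$(i), I fix a cardinal $\lambda\geq\kappa$, a $\Poti{\lambda}{\kappa}$-list $\vec{d}$ and a sufficiently large cardinal $\theta$, and apply (ii) to obtain a small embedding $\map{j}{M}{\HH{\theta}}$ with $\betrag{M}<\kappa$, $\lambda,\vec{d}\in\ran{j}$ and $j^{{-}1}[d_{\ran{j}\cap\lambda}]\in M$. Since $M$ is transitive of cardinality less than $\kappa$, every ordinal of $M$ lies below $\kappa$, so $\delta:=j^{{-}1}(\lambda)\in M\cap\kappa$; a short elementarity argument identifies $\ran{j}\cap\lambda$ with $j[\delta]$. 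Thus $j$ is exactly of the form required in Lemma \ref{lemma:LambdaIneffableSmallChar}(ii), so $\kappa$ is $\lambda$-ineffable for every $\lambda\geq\kappa$, and hence supercompact by the main result of \cite{MR0327518}.

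For (i)$\Rightarrow$(iii), Corollary \ref{corollary:SupercompactMagidorStyle} supplies, for every sufficiently large $\theta$, a small embedding $\map{j}{M}{\HH{\theta}}$ with $M=\HH{\delta}$ for some cardinal $\delta<\kappa$; the strong-limit\-ness of $\kappa$ ensures $\betrag{M}<\kappa$. Corollary \ref{corollary:SupercompactSuperineffable} then guarantees that $j$ witnesses $\lambda$-superineffability for every cardinal $\lambda\geq\kappa$ with $\lambda\in\ran{j}$, and unpacking Definition \ref{superineffable} via the same identification $\ran{j}\cap\lambda=j[j^{{-}1}(\lambda)]$ used above yields exactly the conclusion of (iii).

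The main obstacle is (iii)$\Rightarrow$(ii), because the embedding produced by (iii) need not a priori contain any prescribed $\lambda$ or $\vec{d}$ in its range. I would resolve this by invoking Lemma \ref{lemma:addxtorange} after checking that the correctness property $\Phi$ appearing in (iii)---that is, $\betrag{M}<\kappa$ conjoined with the list condition, quantified over $\lambda,\vec{d}\in\ran{j}$---is downwards-absolute. To verify this, fix a small embedding $\map{j}{M}{\HH{\theta}}$ satisfying $\Phi$ and a cardinal $\nu>\crit{j}$ of $M$, and set $M'=\HH{\nu}^M$ and $j'=j\restriction M'$. Plainly $\betrag{M'}\leq\betrag{M}<\kappa$. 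Given $\lambda\geq\kappa$ and a $\Poti{\lambda}{\kappa}$-list $\vec{d}$ with $\lambda,\vec{d}\in\ran{j'}$, write $\lambda=j(\lambda')$; elementarity makes $\lambda'$ a cardinal of $M$, transitivity of $M$ forces $\lambda'<\kappa$, and $\lambda'\in\HH{\nu}^M$ forces $\lambda'<\nu$, so elementarity also yields $\ran{j'}\cap\lambda=j[\lambda']=\ran{j}\cap\lambda$. Hence $j^{{-}1}[d_{\ran{j'}\cap\lambda}]=j^{{-}1}[d_{\ran{j}\cap\lambda}]$ lies in $M$ by $\Phi(j)$ and is a subset of $\lambda'<\nu$, hence is an element of $\HH{\nu}^M=M'$. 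Thus $\Phi$ is downwards-absolute, and Lemma \ref{lemma:addxtorange} produces small embeddings satisfying $\Phi$ with any desired $\lambda$ and $\vec{d}$ in their range, establishing (ii).
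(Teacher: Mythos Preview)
Your proof is correct and follows essentially the same route as the paper: (i)$\Rightarrow$(iii) via Corollary~\ref{corollary:SupercompactMagidorStyle} and Corollary~\ref{corollary:SupercompactSuperineffable}, (iii)$\Rightarrow$(ii) via downwards-absoluteness and Lemma~\ref{lemma:addxtorange} (which is exactly the content of Proposition~\ref{proposition:SuperineffableIneffable}), and (ii)$\Rightarrow$(i) via Lemma~\ref{lemma:LambdaIneffableSmallChar} together with Magidor's theorem from \cite{MR0327518}. The only difference is that you spell out the verification of downwards-absoluteness, which the paper leaves implicit.
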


A similar, but perhaps seemingly less natural strengthening can also be obtained for the notion of subtlety.

\begin{definition}\label{supersubtle}
  A cardinal $\kappa$ is \emph{supersubtle} if for all sufficiently large cardinals $\theta$ and for every club $C$ in $\kappa$, there is a small embedding $\map{j}{M}{\HH{\theta}}$ for $\kappa$ with $C\in\ran j$ and the property that whenever $\vec{d}=\seq{d_\alpha}{\alpha<\kappa}$ is a $\kappa$-list in $\ran{j}$, then there is an $\alpha\in C\cap\crit{j}$ with $d_\alpha=d_\crit{j}\cap\alpha$.  
\end{definition}

We start by observing what should be of no suprise, namely that $\lambda$-superineffable cardinals are supersubtle.

\begin{lemma}
 Let $\kappa$ be a $\kappa$-superineffable cardinal and let $C$ be a club in $\kappa$. If $\theta$ is a sufficiently large cardinal such that there is a small embedding $\map{j}{M}{\HH{\theta}}$ for $\kappa$  witnessing the $\kappa$-superineffability of $\kappa$, then $C\in\ran{j}$ implies that $j$ also witnesses the supersubtlety of $\kappa$ with respect to $C$. 
\end{lemma}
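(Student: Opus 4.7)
The plan is to reduce to the argument of Lemma \ref{lemma:IneffableWitnessSubtle}. The one ingredient from that lemma that is not directly available here is the conclusion $d_{\crit{j}}\in M$ for $\kappa$-lists $\vec{d}\in\ran{j}$, and I will extract this from the $\kappa$-superineffability hypothesis after recasting $\vec{d}$ as a $\Poti{\kappa}{\kappa}$-list.

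Given a $\kappa$-list $\vec{d}=\seq{d_\alpha}{\alpha<\kappa}$ with $\vec{d}\in\ran{j}$, I would define a $\Poti{\kappa}{\kappa}$-list $\vec{e}=\seq{e_a}{a\in\Poti{\kappa}{\kappa}}$ by $e_a=d_a$ whenever $a$ is an ordinal and $e_a=\emptyset$ otherwise. Carrying out the same definition inside $M$ starting from the preimage $\vec{d}_0=j^{{-}1}(\vec{d})$ produces a $\Poti{\crit{j}}{\crit{j}}$-list $\vec{e}_0\in M$ whose $j$-image is $\vec{e}$, so $\vec{e}\in\ran{j}$.

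The $\kappa$-superineffability of $\kappa$ as witnessed by $j$ then applies to $\vec{e}$. Since $j(\delta)=\kappa=j(\crit{j})$, the injectivity of $j$ forces $\delta=\crit{j}$, whence $j[\delta]=\crit{j}$ and $e_{j[\delta]}=e_{\crit{j}}=d_{\crit{j}}\subseteq\crit{j}$. Because $j$ fixes ordinals below $\crit{j}$ pointwise, this yields $d_{\crit{j}}=j^{{-}1}[d_{\crit{j}}]\in M$.

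At this point the proof of Lemma \ref{lemma:IneffableWitnessSubtle} transcribes verbatim: picking $C_0\in M$ with $j(C_0)=C$, elementarity together with $\crit{j}\in j(C_0)$ and $j(d_{\crit{j}})\cap\crit{j}=d_{\crit{j}}$ (the latter because $d_{\crit{j}}\subseteq\crit{j}$) deliver an $\alpha\in C_0\subseteq C\cap\crit{j}$ with $d_\alpha=d_{\crit{j}}\cap\alpha$. The main, if mild, obstacle is the conversion step: one has to set up $\vec{e}$ in such a way that it genuinely lies in $\ran{j}$ and that the entry $e_{j[\delta]}$ singled out by the $\kappa$-superineffability hypothesis is precisely $d_{\crit{j}}$; declaring $\vec{e}$ to agree with $\vec{d}$ on ordinals (and to be empty elsewhere) achieves both at once.
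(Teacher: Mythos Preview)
Your argument is correct and is essentially the paper's own proof written out in full. The paper's version compresses the derivation of $d_{\crit{j}}\in M$ into the single phrase ``the $\kappa$-superineffability of $\kappa$ implies that $d_{\crit{j}}\in M$'', leaving implicit the extension of the $\kappa$-list to a $\Poti{\kappa}{\kappa}$-list, the identification $\delta=\crit{j}$, and the observation $j^{{-}1}[d_{\crit{j}}]=d_{\crit{j}}$; you spell these out carefully, and then both proofs finish via the same elementarity argument as in Lemma~\ref{lemma:IneffableWitnessSubtle}.
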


\begin{proof}
 Let $\seq{d_\alpha}{\alpha<\kappa}$ be a $\kappa$-list in $\ran{j}$. Then the $\kappa$-superineffability of $\kappa$ implies that $d_{\crit{j}}\in M$ and therefore $d_{\crit{j}}=j(d_{\crit{j}})\cap\crit{j}$. Since $C$ is an element of $\ran{j}$, we have $\crit{j}\in C$ and elementarity implies that there is an $\alpha\in C\cap\crit{j}$ with $d_\alpha=j(d_{\crit{j}})\cap\alpha=d_{\crit{j}}\cap\alpha$.  
\end{proof}

We now show that supersubtle cardinals are downwards absolute to $\LL$. The proof of this statement relies on a classical argument of Kenneth Kunen, which is commonly referred to as the \emph{ancient Kunen lemma}.



\begin{lemma}\label{supersubtleinl}
  If $\kappa$ is supersubtle, then $\kappa$ is supersubtle in $\LL$.
\end{lemma}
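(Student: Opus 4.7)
The plan is to construct, from a $V$-small embedding witnessing supersubtlety of $\kappa$, a corresponding $\LL$-small embedding by taking a suitable Skolem hull inside $\LL_{\theta^*}$. I would fix, in $\LL$, a sufficiently large cardinal $\theta^*$ and a club $C^* \in \LL$ in $\kappa$, and aim to produce an $\LL$-small embedding $\map{j^*}{M^*}{\HH{\theta^*}^\LL} = \LL_{\theta^*}$ in $\LL$ satisfying the supersubtle property with respect to $C^*$.

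First, working in $V$, I would apply the supersubtlety of $\kappa$ to the club $C^*$ and use the downwards-absoluteness of the supersubtle correctness property together with Lemma \ref{lemma:addxtorange} to arrange that the resulting small embedding $\map{j}{M}{\HH{\theta}}$ has both $\theta^*$ and $C^*$ in its range. Setting $\bar\kappa = \crit{j}$ and $\bar\theta^* = j^{-1}(\theta^*)$, the absoluteness of $\LL$ gives $\LL_{\bar\theta^*}^M = \LL_{\bar\theta^*}$ and elementarity gives $j(\LL_{\bar\theta^*}) = \LL_{\theta^*}$, so that $j \restriction \LL_{\bar\theta^*}$ is an elementary embedding from $\LL_{\bar\theta^*}$ into $\LL_{\theta^*}$, whose image $X^* := j[\LL_{\bar\theta^*}]$ is an elementary substructure of $\LL_{\theta^*}$ with $X^* \cap \kappa = \bar\kappa$ (since $j$ fixes ordinals below $\bar\kappa$ while mapping $\bar\kappa$ to $\kappa$) and $C^* \in X^*$.

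Working now in $\LL$, I would define the Skolem hull $X_0 = \mathrm{Hull}^{\LL_{\theta^*}}(\bar\kappa \cup \{C^*\})$, which lies in $\LL$ because $\LL_{\theta^*}$ carries a definable Skolem function coming from the canonical well-ordering of $\LL$. Since $\bar\kappa \cup \{C^*\} \subseteq X^*$ and $X^*$ is closed under $\LL_{\theta^*}$-definability, $X_0 \subseteq X^*$, and hence $X_0 \cap \kappa = \bar\kappa$. By G\"odel's condensation lemma, the transitive collapse of $X_0$ is some $\LL_\beta$ with $\bar\kappa \le \beta < \kappa$, and the inverse of the collapse yields an $\LL$-small embedding $\map{j^*}{\LL_\beta}{\LL_{\theta^*}}$ in $\LL$ with $\crit{j^*} = \bar\kappa$, $j^*(\bar\kappa) = \kappa$, and $\ran{j^*} = X_0 \ni C^*$.

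To verify the supersubtle property for $j^*$, I would take any $\kappa$-list $\vec{d} = \seq{d_\alpha}{\alpha < \kappa}$ in $\LL$ with $\vec{d} \in \ran{j^*} = X_0$; then $\vec{d} \in X_0 \subseteq X^* \subseteq \ran{j}$, and since $\vec{d}$ is in particular a $V$-$\kappa$-list, the $V$-supersubtle property of $j$ with respect to $C^*$ furnishes an $\alpha \in C^* \cap \bar\kappa$ with $d_\alpha = d_{\bar\kappa} \cap \alpha$, an assertion which is $\Delta_0$ in objects of $\LL$ and hence absolute to $\LL$. The main obstacle to overcome is that one cannot use $j \restriction \LL_{\bar\theta^*}$ directly as the $\LL$-embedding, as this restriction typically fails to lie in $\LL$ (which is precisely the content of the ancient Kunen argument); the detour through the Skolem hull is what produces an embedding that is itself definable in $\LL$ yet inherits the supersubtle property from the $V$-embedding via the containment $X_0 \subseteq j[\LL_{\bar\theta^*}]$.
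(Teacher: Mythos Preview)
Your proof is correct and follows essentially the same strategy as the paper's: obtain an $\LL$-small embedding by passing to a Skolem hull generated by $\crit{j}$ together with the club parameter, and verify the supersubtle clause by observing that every $\kappa$-list in the range of the new embedding already lies in the range of the original $V$-embedding $j$.

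The one noteworthy difference is where you take the hull. The paper forms the Skolem hull $X$ of $\{j^{-1}(C)\}\cup(\crit{j}+1)$ inside the domain $\LL_\delta$, and then invokes the ancient Kunen computation explicitly---choosing a bijection $f\colon\crit{j}\to X$ in $\LL_\delta$ and reading off $j\restriction X$ from $f$ and $j(f)$---to conclude that $j\restriction X\in\LL$. You instead form the hull $X_0$ of $\bar\kappa\cup\{C^*\}$ directly inside the codomain $\LL_{\theta^*}$; since $\LL_{\theta^*}$ has definable Skolem functions, $X_0\in\LL$ is immediate and no Kunen-style calculation is needed, while the crucial facts $X_0\cap\kappa=\bar\kappa$ and $X_0\subseteq\ran{j}$ follow from the containment $X_0\subseteq j[\LL_{\bar\theta^*}]$. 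The two constructions in fact produce the same elementary substructure of $\LL_{\theta^*}$ (namely $j[X]=X_0$), so the resulting $\LL$-embeddings coincide; your route simply packages the argument so that membership in $\LL$ is automatic rather than something to be verified.
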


\begin{proof}
 Assume that $\lambda$ is an ordinal such that for every cardinal $\theta>\lambda$ and every club $C$ in $\kappa$, there is a small embedding $\map{j}{M}{\HH{\theta}}$ for $\kappa$ witnessing the supersubtlety of $\kappa$ with respect to $C$. Fix an $\LL$-cardinal $\theta>\lambda$, a cardinal $\vartheta>\theta$ and a constructible club $C$ in $\kappa$. By our assumptions and Lemma \ref{lemma:addxtorange}, we can find a small embedding $\map{j}{M}{\HH{\vartheta}}$ for $\kappa$ witnessing the supersubtlety of $\kappa$ with respect to $C$ and $\delta\in M$ with $j(\delta)=\theta$. Since $\theta$ is a cardinal greater than $\kappa$ in $\LL$, elementarity implies that $j^{{-}1}(C)$ is an element of $\LL_\delta$. Let $X\in\LL_\delta$ denote the Skolem hull of $\{j^{{-}1}(C)\}\cup(\crit{j}+1)$ in $\LL_\delta$. Then $X$ has cardinality $\crit{j}$ in $\LL_\delta$ and we can fix a bijection $\map{f}{\crit{j}}{X}$ in $\LL_\delta$. Then $$j\upharpoonright X ~ = ~ \Set{\langle f(\alpha), ~ j(f)(\alpha)\rangle}{\alpha<\crit{j}}.$$ Since $\map{j\restriction\LL_\delta}{\LL_\delta}{\LL_\theta}$ is an elementary embedding and $f,j(f),X,j(X)\in\LL_\theta$, we can conclude that $j\restriction X$ is an element of $\LL$. Let $\map{\pi}{X}{\LL_\varepsilon}$ denote the transitive collapse of $X$. Then $\map{j\circ\pi^{{-}1}}{\LL_\varepsilon}{\LL_\theta}$ is a constructible small embedding for $\kappa$ with $C\in\ran{j\circ\pi^{{-}1}}$. 
 
 Let $\vec{d}=\seq{d_\alpha}{\alpha<\kappa}$ be a $\kappa$-list in $\ran{j\circ\pi^{{-}1}}$. Since $\vec{d}\in\ran{j}$, there is an $\alpha\in C\cap\crit{j}$ with $d_\alpha=d_{\crit{j}}\cap\alpha$. Since we have $\crit{j}=\crit{j\circ\pi^{{-}1}}$, this shows that $j\circ\pi^{{-}1}$ witnesses the supersubtlety of $\kappa$ with respect to $C$ in $\LL$. 
\end{proof}

We do not know whether $\kappa$-superineffable cardinals are downwards absolute to $\LL$ (see Question \ref{superineffablequestion}).


\section{Filter-based large cardinals}\label{section:filterbased}

 Next, we show that large cardinal notions defined through the existence of certain normal filters can also be characterized through the existence of small embeddings. We start by considering measurable cardinals. The proof of their small embedding characterization is almost the same as the proof of the small embedding characterization of $\lambda$-supercompact cardinals in Lemma \ref{lemma:SupercompactLevelSmallChar} below, however we would like to provide a full proof here as well, for the convenience of the reader.

\begin{lemma}\label{lemma:MeasureableSmallChar}
 The following statements are equivalent for every cardinal $\kappa$: 
 \begin{enumerate}[leftmargin=0.7cm]
  \item $\kappa$ is measurable. 

  \item For all sufficiently large cardinals $\theta$,  there is a small embedding $\map{j}{M}{\HH{\theta}}$ for $\kappa$ with $$\Set{A\in\POT{\crit{j}}^M}{\crit{j}\in j(A)} ~ \in ~ M.$$
 \end{enumerate}
\end{lemma}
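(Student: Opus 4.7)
The plan is to prove the equivalence in two directions. For (i) $\Rightarrow$ (ii), I would fix a normal measure $U$ on $\kappa$ and a sufficiently large cardinal $\theta>2^\kappa$ so that $U\in\HH{\theta}$. In the style of the proof of Lemma \ref{lemma:CharStatLimit}, I would construct a continuous, increasing sequence $\seq{X_\alpha}{\alpha<\kappa}$ of elementary submodels of $\HH{\theta}$ of cardinality less than $\kappa$ with $U\in X_0$ and $\alpha\subseteq X_\alpha\cap\kappa\in\kappa$ for all $\alpha<\kappa$, and then locate some $\beta<\kappa$ for which the inverse $j=\pi_\beta^{-1}$ of the transitive collapse $\map{\pi_\beta}{X_\beta}{M}$ yields the desired small embedding $\map{j}{M}{\HH{\theta}}$ with $\crit{j}=\beta$ and $j(\beta)=\kappa$.

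The key technical step, and the main obstacle of the argument, is to choose $\beta$ such that $X_\beta\cap\kappa=\beta$ and moreover $(\ast)$ for every $B\in X_\beta\cap\POT{\kappa}$, $B\in U$ if and only if $\beta\in B$. The first condition defines a club in $\kappa$. For $(\ast)$, since $|X_\alpha|<\kappa$, the $\kappa$-completeness of $U$ yields, for each $\alpha<\kappa$, a set $D_\alpha\in U$ on which the biconditional $(\ast)$ holds uniformly for every $B\in X_\alpha\cap\POT{\kappa}$ (take the intersection of each such $B$ or of its complement according to whether $B\in U$). The diagonal intersection $\Delta_{\alpha<\kappa}D_\alpha$ then lies in $U$ by normality, and continuity of the chain at limits gives $X_\beta\cap\POT{\kappa}=\bigcup_{\alpha<\beta}X_\alpha\cap\POT{\kappa}$ for limit $\beta$, ensuring that $(\ast)$ at $\beta$ is inherited from $(\ast)$ at earlier stages. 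Picking $\beta$ in the intersection of the above club, the diagonal intersection, and the club of limit ordinals below $\kappa$, a direct calculation using $(\ast)$ shows that $\pi_\beta(U)=\Set{A\in\POT{\beta}^M}{\beta\in j(A)}$, which lies in $M$ as required.

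For the converse (ii) $\Rightarrow$ (i), given a small embedding $\map{j}{M}{\HH{\theta}}$ for $\kappa$ with $\theta>2^\kappa$ and $U_0=\Set{A\in\POT{\crit{j}}^M}{\crit{j}\in j(A)}\in M$, I would verify by standard calculations that $M$ thinks $U_0$ is a normal $\crit{j}$-complete ultrafilter on $\crit{j}$: closure under intersection and ultrafilter-ness follow from $j(A\cap B)=j(A)\cap j(B)$ and $j(\crit{j}\setminus A)=\kappa\setminus j(A)$; $\crit{j}$-completeness uses that for $\gamma<\crit{j}$, any sequence $\seq{A_\alpha}{\alpha<\gamma}\in M$ is fixed pointwise by $j$, so $j(\bigcap_\alpha A_\alpha)=\bigcap_\alpha j(A_\alpha)\ni\crit{j}$; and normality follows from $j(f)(\crit{j})<\crit{j}$ for any regressive $f\in M$, yielding a constant value on a $U_0$-measure-one set. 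By elementarity, $\HH{\theta}$ sees $j(U_0)$ as a normal $\kappa$-complete ultrafilter on $\kappa$, and since $\theta>2^\kappa$ ensures $\POT{\kappa}\in\HH{\theta}$, this statement is absolute to $\VV$, witnessing the measurability of $\kappa$.
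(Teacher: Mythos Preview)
Your proposal is correct. The direction (ii)\,$\Rightarrow$\,(i) matches the paper's argument essentially verbatim: verify in $M$ that the derived set is a normal ultrafilter on $\crit{j}$, push it forward by $j$, and use $\theta>2^\kappa$ for absoluteness.

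For (i)\,$\Rightarrow$\,(ii), however, you take a genuinely different route. The paper passes through the ultrapower: it forms a $\kappa$-sized hull $X\prec\HH{\theta}$ containing $\kappa+1$ and $U$, collapses to $N$, and observes that $k=j_U\circ\pi^{-1}$ is, \emph{inside $\Ult{\VV}{U}$}, a small embedding for $j_U(\kappa)$ whose derived filter equals $\pi(U)\in N$. A reflection step (elementarity between $\VV$ and $\Ult{\VV}{U}$) then yields the existence of a small embedding for $\kappa$ in $\VV$. By contrast, you work entirely in $\VV$, building a continuous chain of hulls of size ${<}\kappa$ and using the normality of $U$ (via a diagonal intersection) to locate a limit $\beta$ with the ``generic point'' property $(\ast)$, which forces $\pi_\beta(U)$ to coincide with the derived filter. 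This is more elementary---no ultrapower, no reflection---and produces the witnessing embedding explicitly rather than by an existence argument. The paper's approach, on the other hand, is the one that transfers uniformly to the $\lambda$-supercompact and $n$-huge cases treated immediately afterward (as the paper itself remarks), where the target model of the small embedding must have size $\geq\kappa$ and your chain-of-small-hulls argument would need modification.
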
 

\begin{proof}
  Assume that $U$ is a normal ultrafilter on $\kappa$ witnessing the measurability of $\kappa$ and let $\map{j_\calU}{\VV}{\Ult{\VV}{U}}$ denote the corresponding ultrapower embedding. Pick a cardinal $\theta>2^\kappa$ and an elementary submodel $X$ of $\HH{\theta}$ of cardinality $\kappa$ containing $\{U\}\cup(\kappa+1)$. Let $\map{\pi}{X}{N}$ denote the corresponding transitive collapse. Then the map $\map{k=j_U\circ\pi^{{-}1}}{N}{\HH{j_U(\theta)}^{\Ult{\VV}{U}}}$ is a non-trivial elementary embedding with $\crit{k}=\kappa$ and $k(\crit{k})=j_U(\kappa)$. Since $\Ult{\VV}{U}$ is closed under $\kappa$-sequences in $\VV$ and $N\in\HH{\kappa^+}\subseteq\Ult{\VV}{U}$, we have $k,N\in\Ult{\VV}{U}$ and the map $\map{k}{N}{\HH{j_U(\theta)}^{\Ult{\VV}{U}}}$ is a small embedding for $j_U(\kappa)$ in $\Ult{\VV}{U}$. Given $A\in\POT{\kappa}^N$, we have $\pi^{{-}1}(A)=A=\pi(A)$ and hence $$\kappa\in k(A) ~ \Longleftrightarrow ~  \kappa\in j_U(A) ~ \Longleftrightarrow ~ A\in U ~ \Longleftrightarrow ~ A
  \in\pi(U).$$ This allows us to conclude that $$\pi(U) ~ = ~ \Set{A\in \POT{\kappa}^N}{\kappa\in k(A)} ~ \in ~ N.$$ Using elementarity, we can find a small embedding $\map{j}{M}{\HH{\theta}}$ for $\kappa$ in $\VV$ with the property stated in (ii).

 Now, assume that (ii) holds. Pick a sufficiently large cardinal $\theta$ and a small embedding $\map{j}{M}{\HH{\theta}}$ for $\kappa$ such that $\theta>2^\kappa$ and the set $U$ of all $A\in\POT{\crit{j}}^M$ with $\crit{j}\in j(A)$ is contained in $M$. Then $U$ is a normal ultrafilter on $\crit{j}$ in $M$. Since $\theta>2^\kappa$, this shows that $j(U)$ is a normal ultrafilter on $\kappa$ that witnesses the measurability of $\kappa$.  
\end{proof}

The following observation connects the above result  with the small embedding characterizations of smaller large cardinal notions, by showing that witnessing small embeddings for measurability are also witnessing embeddings for all large cardinal notions considered so far that are direct consequences of measurability.

\begin{lemma}\label{lemma:measurablesmallembeddingconsequences}
 Let $\kappa$ be a measurable cardinal, let $\theta>2^\kappa$ be a sufficiently large cardinal such that there is a small embedding $\map{j}{M}{\HH{\theta}}$ for $\kappa$ witnessing the measurability of $\kappa$, as in statement (ii) of Lemma \ref{lemma:MeasureableSmallChar}. 
 \begin{enumerate}
  \item The embedding $j$ witnesses that $\kappa$ is a stationary limit of Ramsey cardinals, as in statement (ii) of Lemma \ref{lemma:CharStatLimit}. 
  
  \item If $x\in\VV_{\kappa+1}\cap\ran{j}$, then  $j$ witnesses the $\Pi^2_1$-indescribability of $\kappa$ with respect to $x$, as in statement (iii) of Lemma \ref{lemma:SmallEmbCharIndescribable}. 
  
  \item The embedding $j$ witnesses that the $\kappa$-superineffability of $\kappa$. 
 \end{enumerate}
\end{lemma}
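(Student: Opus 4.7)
Throughout I write $\nu = \crit{j}$. The hypothesis gives a normal $\nu$-complete ultrafilter $U = \Set{A \in \POT{\nu}^M}{\nu \in j(A)} \in M$ on $\nu$ in $M$; by elementarity of $j$ and the assumption $\theta > 2^\kappa$, its image $W := j(U)$ is a normal ultrafilter on $\kappa$ in $\VV$. Lemma \ref{lemma:ModelsContainInitialSegment} applies (since $\nu$ is a strong limit cardinal in $M$), yielding $\HH{\nu} \subseteq M$ and $\nu$ strong limit in $\VV$; as $\POT{\VV_\alpha} \subseteq \HH{\nu}$ for every $\alpha < \nu$, the local second-order definitions appearing below will be absolute between $M$ and $\VV$ for $\alpha < \nu$.

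For statement (3), my plan is a direct computation of the diagonal entry of a given $\kappa$-list $\vec{d} = j(\vec{d}_0) \in \ran{j}$, with $\vec{d}_0 = \seq{e_\beta}{\beta<\nu} \in M$, using $U$: set $A_\alpha = \Set{\beta<\nu}{\alpha \in e_\beta} \in M$ for each $\alpha < \nu$; by elementarity $j(A_\alpha) = \Set{\beta<\kappa}{\alpha \in d_\beta}$, so $\alpha \in d_\nu$ iff $\nu \in j(A_\alpha)$ iff $A_\alpha \in U$. Hence $d_\nu = \Set{\alpha<\nu}{A_\alpha \in U}$ is definable in $M$ from $\vec{d}_0$ and $U$, and therefore $d_\nu \in M$. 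Since $j \restriction \nu = \id$ gives $j[\nu] = \nu$ and $j^{-1}[d_\nu] = d_\nu$, this verifies Definition \ref{superineffable} with $\lambda = \kappa$ and $\delta = \nu$.

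Parts (1) and (2) both follow a ``measure reflection'' template: I will exhibit a reflection set $S \subseteq \nu$ lying in $U$, so that $\nu \in j(S)$ by the definition of $U$, and then use absoluteness to read off the corresponding property of $\nu$ in $\VV$. For (2), given a $\Pi^2_1$-formula $\varphi$ with parameter $z = j^{-1}(x) = x \cap \VV_\nu$ (using $j \restriction \VV_\nu = \id$) and $(\VV_\nu \models \varphi(z))^M$, the classical theorem that measurable cardinals are $\Pi^2_1$-indescribable \emph{with the reflection set in the measure} (applied inside $M$ to $U$, via the internal ultrapower $\Ult{M}{U}$) places $S := \Set{\delta<\nu}{\VV_\delta \models \varphi(z \cap \VV_\delta)}$ into $U$; the truth of $\VV_\delta \models \varphi$ for $\delta < \nu$ is absolute between $M$ and $\VV$ by $\POT{\VV_\delta} \subseteq \HH{\nu} \subseteq M$, so $j(S)$ equals $\Set{\delta<\kappa}{\VV_\delta \models \varphi(x \cap \VV_\delta)}$ in $\VV$, and $\nu \in j(S)$ yields $\VV_\nu \models \varphi(z)$. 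For (1), the set $R := \Set{\alpha<\nu}{\alpha \text{ is Ramsey}} \in M$ coincides with $\Set{\alpha<\nu}{\alpha \text{ is Ramsey in } \VV}$ by absoluteness of Ramseyness below $\nu$ (again via $\POT{\VV_\alpha} \subseteq \HH{\nu} \subseteq M$); elementarity and $\HH{\theta}$-correctness for Ramseyness below $\kappa$ then give $j(R) = \Set{\alpha<\kappa}{\alpha \text{ is Ramsey}}$ in $\VV$. Since $\kappa$ is Ramsey in $\VV$ (being measurable via $W$) and $\VV_{\kappa+1}$ is absolute between $\VV$ and $\Ult{\VV}{W}$ (by $\kappa$-closure of the ultrapower), $\kappa$ is Ramsey in $\Ult{\VV}{W}$, whence $j(R) \in W$. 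By elementarity $R \in U$, and therefore $\nu \in j(R)$, i.e., $\nu$ is Ramsey in $\VV$.

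The main technical points are the careful absoluteness bookkeeping between $M$, $\HH{\theta}$ and $\VV$ for the second-order notions involved, and the internal application of the classical ``measurable implies $\Pi^2_1$-indescribable with reflection in the measure'' argument inside $(M, U)$. Both ingredients reduce to the inclusions $\HH{\nu} \subseteq M$ (for $\nu$) and $\VV_{\kappa+1} \subseteq \HH{\theta}$ (using $\theta > 2^\kappa$, for $\kappa$), together with the normality of the respective ultrafilters $U$ and $W$.
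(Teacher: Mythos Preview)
Your argument is correct and, for parts (i) and (iii), essentially identical to the paper's proof (you spell out the absoluteness bookkeeping more explicitly, but the computation of $d_{\crit{j}}$ from $U$ and the Ramsey reflection via $j(U)$ are the same).

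For part (ii) there is a genuine, if minor, difference in route. You apply the classical ``measurable implies $\Pi^2_1$-reflection into the measure'' argument \emph{inside} $M$, using the internal ultrapower $\Ult{M}{U}$, to obtain $S\in U$ and then read off $\crit{j}\in j(S)$. The paper instead first uses elementarity to pass from $(\VV_{\crit{j}}\models\varphi(j^{-1}(x)))^M$ to $\VV_\kappa\models\varphi(x)$ in $\VV$, applies the same classical result in $\VV$ with the genuine normal measure $j(U)$, obtains that the reflection set lies in $j(U)$, and then (implicitly, since that set is in $\ran{j}$) pulls back to conclude $\crit{j}$ belongs to it. The two routes are dual. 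The paper's has the slight advantage that the ultrapower is taken in $\VV$, so no verification is needed that $M$ carries enough of $\ZFC^-$ for the internal ultrapower argument and its closure properties; your route is fine too once one notes that for sufficiently large (e.g.\ regular) $\theta$ the model $M\prec\HH{\theta}$ satisfies $\ZFC^-$, so the standard proof of $\Pi^2_1$-reflection goes through internally.
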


\begin{proof}
   Let $U$ denote the set of all $A\in\POT{\crit{j}}^M$ with $\crit{j}\in j(A)$. Then $U$ is an element of $M$ and $j(U)$ is a normal ultrafilter on $\kappa$. 
   
   (i) Since $\kappa$ is a Ramsey cardinal in the ultrapower $\Ult{\VV}{j(U)}$, it follows that the set of all Ramsey cardinals less than $\kappa$ is an element of $j(U)$ and this implies that $\crit{j}$ is a Ramsey cardinal. 
   
   (ii) Let $\varphi(v)$ be a $\Pi^2_1$-formula with $(\VV_{\crit{j}}\models\varphi(j^{{-}1}(x)))^M$. Then elementarity implies $\VV_\kappa\models\varphi(x)$ and we can apply {\cite[Proposition 6.5]{MR1994835}} to conclude that the set $\Set{\alpha<\kappa}{\VV_\alpha\models\varphi(x\cap\VV_\alpha)}$ is an element of $j(U)$. Since Lemma \ref{lemma:ModelsContainInitialSegment} implies that $j^{{-}1}(x)=x\cap\VV_{\crit{j}}$, this shows that $\VV_{\crit{j}}\models\varphi(j^{{-}1}(x))$. 
   
 (iii) Let $\vec{d}=\seq{d_\alpha}{\alpha<\kappa}$ be a $\kappa$-list in $\ran{j}$ and let $d$ denote the set of all $\alpha<\crit{j}$ with the property that the set $D_\alpha=\Set{\beta<\crit{j}}{\alpha\in d_\beta}$ is contained in $U$. By our assumptions, the set $d$ is an element of $M$. Then we have $$\alpha\in d ~ \Longleftrightarrow ~ D_\alpha\in U ~ \Longleftrightarrow ~ \crit{j}\in j(D_\alpha) ~ \Longleftrightarrow ~ \alpha\in d_{\crit{j}}$$ for all $\alpha<\crit{j}$ and this shows that $d=d_{\crit{j}}\in M$. 
\end{proof}

Lemma \ref{lemma:MeasureableSmallChar} directly generalizes to a small embedding characterization of certain degrees of supercompactness.

\begin{lemma}\label{lemma:SupercompactLevelSmallChar}
 The following statements are equivalent for all cardinals $\kappa\leq\lambda$:  
 \begin{enumerate}[leftmargin=0.7cm]
  \item $\kappa$ is $\lambda$-supercompact. 

   \item For all sufficiently large cardinals $\theta$,  there is a small embedding $\map{j}{M}{\HH{\theta}}$ for $\kappa$ and $\delta\in M\cap\kappa$ such that $j(\delta)=\lambda$ and $$\Set{A\in\POT{\Poti{\delta}{\crit{j}}}^M}{j[\delta]\in j(A)} ~ \in ~  M.$$
 \end{enumerate} 
\end{lemma}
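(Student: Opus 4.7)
The plan is to closely mirror the proof of Lemma~\ref{lemma:MeasureableSmallChar}, replacing a normal ultrafilter on $\kappa$ by a normal fine ultrafilter on $\Poti{\lambda}{\kappa}$.

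For the direction (i) $\Rightarrow$ (ii), I would assume $\kappa$ is $\lambda$-supercompact, let $U$ be a normal fine ultrafilter on $\Poti{\lambda}{\kappa}$ and let $\map{j_U}{\VV}{N}$ denote the associated ultrapower embedding, where $N=\Ult{\VV}{U}$ is closed under $\lambda$-sequences in $\VV$, and recall that $A\in U$ if and only if $j_U[\lambda]\in j_U(A)$ for every $A\in\POT{\Poti{\lambda}{\kappa}}$. Fix a cardinal $\theta>2^{\lambda^{{<}\kappa}}$, pick an elementary submodel $X\prec\HH{\theta}$ of cardinality $\lambda$ with $\{U\}\cup(\lambda+1)\subseteq X$, and let $\map{\pi}{X}{N_0}$ denote the corresponding transitive collapse. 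Since $|N_0|=\lambda$, closure of $N$ under $\lambda$-sequences yields $N_0\in N$. Setting $k=j_U\circ\pi^{-1}$, the fact that $\pi$ is the identity on $\lambda+1$ implies that $\map{k}{N_0}{\HH{j_U(\theta)}^N}$ is a small embedding for $j_U(\kappa)$ in $N$ with $\crit{k}=\kappa$, $k(\kappa)=j_U(\kappa)$, $k(\lambda)=j_U(\lambda)$, and $k[\lambda]=j_U[\lambda]$.

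The main technical step is then to identify the corresponding measure set inside $N_0$. Using that $\lambda+1\subseteq X$, I would verify that $\pi$ fixes every element of $X\cap\Poti{\lambda}{\kappa}$ pointwise, and hence that every $A\in\POT{\Poti{\lambda}{\kappa}}^{N_0}$ has the form $A=\pi(B)=B\cap X$ for a unique $B\in X\cap\POT{\Poti{\lambda}{\kappa}}$, with $k(A)=j_U(B)$. Consequently $k[\lambda]\in k(A)$ if and only if $B\in U$, giving
$$\Set{A\in\POT{\Poti{\lambda}{\kappa}}^{N_0}}{k[\lambda]\in k(A)} ~ = ~ \pi(U) ~ \in ~ N_0.$$
Setting $\delta=\lambda\in N_0\cap j_U(\kappa)$, the embedding $k$ thus witnesses statement~(ii) in $N$ for the parameters $j_U(\kappa)$, $j_U(\lambda)$, and $j_U(\theta)$, and the elementarity of $j_U$ then yields the corresponding witness in $\VV$ for $\kappa$, $\lambda$, and $\theta$.

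For the converse, given a small embedding $\map{j}{M}{\HH{\theta}}$ as in~(ii), one verifies in the standard way that $\bar U=\Set{A\in\POT{\Poti{\delta}{\crit{j}}}^M}{j[\delta]\in j(A)}$ is a normal fine ultrafilter on $\Poti{\delta}{\crit{j}}$ in $M$: non-principality, $\crit{j}$-completeness, fineness (from $j(\alpha)\in j[\delta]$ for $\alpha<\delta$), normality (via the standard regressive function argument), and the ultrafilter property all follow from elementarity of $j$. For $\theta$ sufficiently large that being a normal fine ultrafilter on $\Poti{\lambda}{\kappa}$ is absolute between $\HH{\theta}$ and $\VV$, elementarity then implies that $j(\bar U)$ is such an ultrafilter in $\VV$, witnessing the $\lambda$-supercompactness of $\kappa$. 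The main obstacle I anticipate is the careful bookkeeping in the forward direction, specifically verifying that $\pi(B)=B\cap X$ for $B\in X\cap\POT{\Poti{\lambda}{\kappa}}$, that this correctly identifies $\POT{\Poti{\lambda}{\kappa}}^{N_0}$, and that the measure set computed from $k$ coincides with $\pi(U)$; this relies on having $\lambda+1\subseteq X$ rather than merely $\lambda\in X$, and on the closure of $N$ under $\lambda$-sequences, which ensures $N_0\in N$.
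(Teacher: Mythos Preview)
Your proposal is correct and follows essentially the same approach as the paper's proof: both directions use the normal fine ultrafilter on $\Poti{\lambda}{\kappa}$ and the associated ultrapower, collapse an elementary submodel $X\prec\HH{\theta}$ of size $\lambda$ containing $\lambda+1$ and $U$, and identify the induced measure set with $\pi(U)$ before reflecting via elementarity of $j_U$. Your write-up is slightly more explicit about why $\pi$ fixes elements of $X\cap\Poti{\lambda}{\kappa}$ and why $k\in N$, but the argument is the same.
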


\begin{proof}
   Assume that there is a normal ultrafilter $U$ on $\Poti{\lambda}{\kappa}$ witnessing the $\lambda$-supercompactness of $\kappa$. Let $\map{j_U}{\VV}{\Ult{\VV}{U}}$ denote the corresponding ultrapower embedding. Then $\lambda<j_U(\kappa)$. Fix a cardinal $\theta$ with $U\in\HH{\theta}$ and an elementary submodel $X$ of $\HH{\theta}$ of cardinality $\lambda$  with $\{U\}\cup(\lambda+1)\subseteq X$.  Let $\map{\pi}{X}{N}$ denote the corresponding transitive collapse. Then the closure of $\Ult{\VV}{U}$ under $\lambda$-sequences in $\VV$ implies that the map $\map{k=j_U\circ\pi^{{-}1}}{N}{\HH{j_U(\theta)}^{\Ult{\VV}{U}}}$ is an element of $\Ult{\VV}{\calU}$, and this map is a small embedding for $j_U(\kappa)$ with $\crit{k}=\kappa$ and  $k(\lambda)=j_U(\lambda)$ in $\Ult{\VV}{U}$. Then $k[\lambda]=j_U[\lambda]$ and therefore we have $$k[\lambda]\in k(A) ~ \Longleftrightarrow ~  j_U[\lambda]\in j_U(\pi^{{-}1}(A))
 ~ \Longleftrightarrow ~ \pi^{{-}1}(A)\in U ~ \Longleftrightarrow ~ A\in\pi(U)$$ for all $A\in \POT{\Poti{\lambda}{\kappa}}^N$. These computations show that $$\pi(U) ~ = ~ \Set{A\in\POT{\Poti{\lambda}{\kappa}}^N}{k[\lambda]\in k(A)} ~ \in ~ N.$$ In this situation, we can use elementarity between $\VV$ and $\Ult{\VV}{U}$ to find a small embedding $\map{j}{M}{\HH{\theta}}$ for $\kappa$ and $\delta\in M$ such that $\delta<\kappa$, $j(\delta)=\lambda$ and $\Set{A\in\POT{\Poti{\delta}{\crit{j}}}^{M}}{j[\delta]\in j(A)}\in M$.

 Now, assume that (ii) holds. Fix a cardinal $\theta$ such that $\POT{\Poti{\lambda}{\kappa}}\in\HH{\theta}$ and such that there is a small embedding $\map{j}{M}{\HH{\theta}}$ for $\kappa$ and $\delta\in M\cap\kappa$ as in (ii). Then the set $U$ of all $A\in\POT{\POTI{\delta}{\crit{j}}}^M$ with $j[\delta]\in j(A)$ is an element of $M$ and the assumption $\delta<\kappa$ implies that this set is a normal ultrafilter on $\Poti{\delta}{\crit{j}}$ in $M$. Since $\POT{\Poti{\lambda}{\kappa}}\in\HH{\theta}$, we can conclude that $j(U)$ is a normal filter on $\Poti{\lambda}{\kappa}$ that witnesses the $\lambda$-supercompactness of $\kappa$.  
\end{proof}


\begin{lemma}\label{lemma:ImplicationsSECLambdaSupercompact}
 Let $\kappa$ be a $\lambda$-supercompact cardinal, let  $\theta$ be a sufficiently large cardinal such that there is a small embedding $\map{j}{M}{\HH{\theta}}$ for $\kappa$ witnessing the $\lambda$-supercompactness of $\kappa$, as in statement (ii) of Lemma \ref{lemma:SupercompactLevelSmallChar}. 
 \begin{enumerate}[leftmargin=0.7cm]
  \item The embedding $j$ witnesses the measurability of $\kappa$, as in Lemma \ref{lemma:MeasureableSmallChar}. 

  \item If $\kappa\leq\lambda_0<\lambda$ is a cardinal in \ran{j}, then the embedding $j$ witnesses the $\lambda_0$-supercompactness of $\kappa$, as in statement (ii) of Lemma \ref{lemma:SupercompactLevelSmallChar}. 

  \item The embedding $j$ witnesses the $\lambda$-superineffabilty of $\kappa$. 
 \end{enumerate}
\end{lemma}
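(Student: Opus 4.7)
The plan is to derive all three items from the single object $U = \Set{A \in \POT{\POTI{\delta}{\crit{j}}}^M}{j[\delta] \in j(A)}$, which lies in $M$ by hypothesis. The one arithmetical fact I would isolate first is that, for every ordinal $\mu \in \ran{j}$ with $\kappa \leq \mu \leq \lambda$, one has $j[\delta] \cap \mu = j[j^{-1}(\mu)]$, using only that $j \restriction \crit{j} = \id_{\crit{j}}$ and that $j(\alpha) \geq \kappa$ for every $\alpha \geq \crit{j}$ in $M$; in particular $j[\delta] \cap \kappa = \crit{j}$.

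For (i), I would pull $U$ back along the map $x \mapsto x \cap \crit{j}$: given $A \in \POT{\crit{j}}^M$, set $A^{*} = \Set{x \in \POTI{\delta}{\crit{j}}}{x \cap \crit{j} \in A} \in M$. Elementarity gives $j(A^{*}) = \Set{x \in \POTI{\lambda}{\kappa}}{x \cap \kappa \in j(A)}$, and combined with $j[\delta] \cap \kappa = \crit{j}$ this yields $A^{*} \in U$ if and only if $\crit{j} \in j(A)$. Hence the witnessing set from Lemma \ref{lemma:MeasureableSmallChar}(ii) is definable in $M$ from $U$, and thus lies in $M$. Item (ii) would be the exact same move with $\crit{j}$, $\kappa$ replaced by $\delta_{0} = j^{-1}(\lambda_{0}) \in M \cap \kappa$ and $\lambda_{0}$: for $B \in \POT{\POTI{\delta_{0}}{\crit{j}}}^M$ set $B^{+} = \Set{x \in \POTI{\delta}{\crit{j}}}{x \cap \delta_{0} \in B}$, and use $j[\delta] \cap \lambda_{0} = j[\delta_{0}]$ to conclude that $B^{+} \in U$ if and only if $j[\delta_{0}] \in j(B)$, producing the required witness from Lemma \ref{lemma:SupercompactLevelSmallChar}(ii) at the parameter $\lambda_{0}$.

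For (iii), given a $\POTI{\lambda}{\kappa}$-list $\vec{d} = \seq{d_{a}}{a \in \POTI{\lambda}{\kappa}}$ with $\vec{d} \in \ran{j}$, I would let $\vec{e} = \seq{e_{x}}{x \in \POTI{\delta}{\crit{j}}}^{M} = j^{-1}(\vec{d})$ and, working inside $M$, form
\[ d ~ = ~ \Set{\alpha < \delta}{\Set{x \in \POTI{\delta}{\crit{j}}}{\alpha \in e_{x}} \in U}. \]
Elementarity then gives, for each $\alpha < \delta$, that $j(\Set{x \in \POTI{\delta}{\crit{j}}}{\alpha \in e_{x}}) = \Set{x \in \POTI{\lambda}{\kappa}}{j(\alpha) \in d_{x}}$, so $\alpha \in d$ if and only if $j(\alpha) \in d_{j[\delta]}$. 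Since $d_{j[\delta]} \subseteq j[\delta]$ by the list property, this identifies $d$ with $j^{-1}[d_{j[\delta]}]$, yielding the witness required by Definition \ref{superineffable}.

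The main obstacle I anticipate is bookkeeping rather than conceptual difficulty: one must track the three ordinals $\crit{j} \leq \delta_{0} < \delta$ below $\kappa$ together with their images $\kappa \leq \lambda_{0} < \lambda$, and keep straight which of the \emph{seeds} $\crit{j}$, $j[\delta_{0}]$, $j[\delta]$ belongs to each of the measures and lists in play. Once the single identity $j[\delta] \cap \mu = j[j^{-1}(\mu)]$ is isolated, each of the three items reduces to a one-line projection from $U$.
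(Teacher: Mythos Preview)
Your proposal is correct and follows essentially the same route as the paper: in each item you project the single filter $U\in M$ along an $M$-definable map and read off the required witness from the seed $j[\delta]$. The only cosmetic difference is in (i), where the paper projects via $a\mapsto\otp{a\cap\crit{j}}$ while you use the simpler $a\mapsto a\cap\crit{j}$; both succeed because $j[\delta]\cap\kappa=\crit{j}$ is already an ordinal, and your isolated identity $j[\delta]\cap\mu=j[j^{-1}(\mu)]$ makes this transparent.
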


\begin{proof}
 Pick $\delta\in M\cap\kappa$ with $j(\delta)=\lambda$ and let $U\in M$ denote the set of all $A\in\POT{\Poti{\delta}{\crit{j}}}^M$ with $j[\delta]\in j(A)$. 

 (i) Define $F$ to be the set of all $x\in\POT{\crit{j}}^M$ with the property that the set $F_x=\Set{a\in\Poti{\delta}{\crit{j}}^M}{\otp{a\cap\crit{j}}\in x}$ is an element of $U$. Then our assumptions imply that $F$ is an element of $M$ and we have $$x\in F ~ \Longleftrightarrow ~ F_x\in U ~ \Longleftrightarrow ~ j[\delta]\in j(F_x) ~ \Longleftrightarrow ~ \crit{j}=\otp{j[\delta]\cap\kappa}\in j(x)$$ for all $x\in\POT{\crit{j}}^M$. 

 (ii) Pick $\delta_0\in M$ with $j(\delta_0)=\lambda_0$. Let $F$ denote the set of all $A\in\POT{\Poti{\delta_0}{\crit{j}}}^M$ with the property that the set $\Set{a\in\Poti{\delta}{\crit{j}}^M}{a\cap\delta_0\in A}$ is an element of $U$. Then $F$ is an element of $M$ and it is equal to the set of all $A\in\POT{\Poti{\delta_0}{\crit{j}}}^M$ with $j[\delta_0]\in j(A)$.  

 (iii) Pick a $\Poti{\lambda}{\kappa}$-list $\vec{d}=\seq{d_a}{a\in\Poti{\lambda}{\kappa}}$ in $\ran{j}$ and a $\Poti{\delta}{\crit{j}}$-list $\vec{e}=\seq{e_a}{a\in\Poti{\delta}{\crit{j}}}$ in $M$ with $j(\vec{e})=\vec{d}$. Let $D$ denote the set of all $\gamma<\delta$ with the property that the set $D_\gamma=\Set{a\in\Poti{\delta}{\crit{j}}^M}{\gamma\in e_a}$ is contained in $U$. Then $D$ is an element of $M$ and we have $$\gamma\in D ~ \Longleftrightarrow ~ D_\gamma\in U ~ \Longleftrightarrow ~ j[\delta]\in j(D_\gamma) ~ = ~ j(\gamma)\in d_{j[\delta]}$$ for all $\gamma<\delta$. This shows that $D=j^{{-}1}[d_{j[\delta]}]\in M$. 
\end{proof}

The next proposition shows that the domain models of small embeddings witnessing $\lambda$-supercompactness possess certain closure properties. These closure properties will allow us to connect the characterization of supercompactness provided by Lemma \ref{lemma:SupercompactLevelSmallChar} with Magidor's characterization in Corollary \ref{corollary:SupercompactMagidorStyle}.

\begin{proposition}\label{proposition:ClosureLambdaSupercompactSEC}
 Let $\kappa$ be a $\lambda$-supercompact cardinal and let  $\map{j}{M}{\HH{\theta}}$ be a small embedding for $\kappa$ witnessing the $\lambda$-supercompactness of $\kappa$, as in statement (ii) of Lemma \ref{lemma:SupercompactLevelSmallChar}.  If $\delta\in M\cap\kappa$ with $j(\delta)=\lambda$ and $x\in\POT{\crit{j}}^M$, then $j(x)\cap\delta\in M$. Moreover, if $\lambda$ is a strong limit cardinal, then $\delta$ is a strong limit cardinal and $\HH{\delta}\in M$. 
\end{proposition}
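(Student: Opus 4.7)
The plan for the first part is to exploit the normal ultrafilter
$$U \,=\, \Set{A\in\POT{\Poti{\delta}{\crit{j}}}^M}{j[\delta]\in j(A)} \,\in\, M$$
provided by the witnessing hypothesis. Given $x\in\POT{\crit{j}}^M$, I would define inside $M$ the sequence $\seq{A_\gamma}{\gamma<\delta}$ by
$$A_\gamma \,=\, \Set{a\in\Poti{\delta}{\crit{j}}^M}{\gamma\in a \text{ and } \otp{a\cap\gamma}\in x},$$
which lies in $M$ as it is definable there from $x$, $\delta$ and $\crit{j}$. By elementarity,
$$j(A_\gamma) \,=\, \Set{a\in\Poti{\lambda}{\kappa}}{j(\gamma)\in a \text{ and } \otp{a\cap j(\gamma)}\in j(x)}.$$
Since $j$ restricts to an order-isomorphism between $\delta$ and $j[\delta]$ one has $\otp{j[\delta]\cap j(\gamma)}=\gamma$, so $A_\gamma\in U$ is equivalent to $\gamma\in j(x)$; consequently $j(x)\cap\delta=\Set{\gamma<\delta}{A_\gamma\in U}\in M$.

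For the second part, I would first argue that $\crit{j}$ is a strong limit cardinal in $V$ by applying Lemma \ref{lemma:ImplicationsSECLambdaSupercompact}(i): the embedding $j$ simultaneously witnesses the measurability of $\kappa$, so the derived normal measure on $\crit{j}$ in $M$ makes $\crit{j}$ measurable, and hence a strong limit cardinal, in $M$; Lemma \ref{lemma:ModelsContainInitialSegment} transfers this to $V$ and yields $\HH{\crit{j}}\subseteq M$. Elementarity of $j$ then shows that $\delta$ is a strong limit cardinal in $M$. To lift this to $V$ and to obtain $\HH{\delta}\in M$, I would pass to the internal supercompact ultrapower $\map{\bar{j}}{M}{N:=\Ult{M}{U}}$ computed inside $M$, together with the induced factor embedding $\map{k}{N}{\HH{\theta}}$ satisfying $k\circ\bar{j}=j$ and $\crit{k}=\crit{j}$; since $N$ is $\delta$-closed in $M$, one has $\HH{\delta}^M\in N\subseteq M$. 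A step-by-step verification, using the first part in combination with an $M$-internal bijection $\HH{\crit{j}}^M\leftrightarrow\crit{j}$ to reduce general small sets to subsets of $\crit{j}$, should then give $\POT{\nu}^V=\POT{\nu}^M$ for every cardinal $\nu<\delta$, from which $\HH{\delta}^V=\HH{\delta}^M\in M$ and $\delta$ being a strong limit cardinal in $V$ both follow.

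The hardest step I anticipate is showing $\POT{\nu}^V=\POT{\nu}^M$ for cardinals $\nu\in[\crit{j},\delta)$, since the first part only exhibits subsets of $\delta$ in the form $j(x)\cap\delta$ with $x\in\POT{\crit{j}}^M$, while a priori there are many more subsets of $\nu$ in $V$. My plan to bridge this gap is to combine the factor embedding $k$ with the $\delta$-closure of $N$ in $M$, so that every $y\subseteq\nu$ in $V$ can be represented by an $M$-definable function on $\Poti{\delta}{\crit{j}}^M$ and therefore placed inside $M$.
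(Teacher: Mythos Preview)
Your argument for the first part is correct and essentially identical to the paper's: define the sets $A_\gamma$ in $M$, observe that $j[\delta]\in j(A_\gamma)$ is equivalent to $\gamma\in j(x)$ via $\otp{j[\delta]\cap j(\gamma)}=\gamma$, and conclude that $j(x)\cap\delta=\Set{\gamma<\delta}{A_\gamma\in U}\in M$.

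Your second part, however, contains genuine errors and a gap. First, two factual slips: the critical point of the factor map $k\colon N\to\HH{\theta}$ is \emph{not} $\crit{j}$ but strictly above $\delta$ (since $k(\gamma)=\gamma$ for all $\gamma\leq\delta$, as you implicitly use elsewhere), and the inclusion $N\subseteq M$ is false in general --- the ultrapower $N=\Ult{M}{U}$ is typically longer than $M$. More importantly, your plan for the crucial step ``$\POT{\nu}^\VV=\POT{\nu}^M$ for $\nu\in[\crit{j},\delta)$'' does not work: the $\delta$-closure of $N$ is closure under $M$-sequences, not $\VV$-sequences, so an arbitrary $y\in\POT{\nu}^\VV$ cannot be placed inside $N$ or $M$ by that route. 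The factor map $k$ does not help either, since nothing guarantees that such a $y$ lies in $\ran{k}$.

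The paper's argument avoids the ultrapower entirely and instead applies the first part once to a single well-chosen $x$. Fix in $M$ a sequence $s=\seq{s_\alpha}{\alpha<\crit{j}}$ where each $s_\alpha\colon(2^{|\alpha|})^M\to\POT{\alpha}^M$ is a bijection, and let $x\subseteq\crit{j}$ code $s$ via G\"odel pairing. Then $j(x)$ codes $j(s)$, and $j(s)_\alpha$ enumerates the true $\POT{\alpha}$ for each $\alpha<\kappa$. Since $\delta$ is closed under G\"odel pairing, the set $j(x)\cap\delta\in M$ already encodes enough of $j(s)$ to define, inside $M$, the injection $j(s)_\nu\restriction\delta\colon\delta\to\POT{\nu}$ for any $\nu<\delta$; if $\delta$ were not a strong limit in $\VV$ this would contradict $\delta$ being a strong limit in $M$. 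Once $\delta$ is a strong limit in $\VV$, the full sequence $\seq{j(s)_\alpha}{\alpha<\delta}$ is recoverable from $j(x)\cap\delta$, and this sequence enumerates $\HH{\delta}$, placing it in $M$ as an element.
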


\begin{proof}
 Fix some $x\in\POT{\crit{j}}^M$. Given $\gamma<\delta$, set $$A_\gamma=\Set{a\in\Poti{\delta}{\crit{j}}^M}{\gamma\in a, ~ \otp{a\cap\gamma}\in x}.$$ Then $$j[\delta]\in j(A_\gamma) ~ \Longleftrightarrow ~ \otp{ j[\delta]\cap j(\gamma)}\in j(x) ~ \Longleftrightarrow ~ \gamma\in j(x)$$ for all $\gamma<\delta$. By our assumptions, these equivalences imply that the subset $j(x)\cap\delta$ is definable in $M$. 
 
 Now, assume that $\lambda$ is a strong limit cardinal. Fix a sequence $s=\seq{s_\alpha}{\alpha<\crit{j}}$ in $M$ such that $\map{s_\alpha}{(2^{\betrag{\alpha}})^M}{\POT{\alpha}^M}$ is a bijection for every $\alpha<\crit{j}$. Define $$x ~ = ~ \Set{\goedel{\alpha}{\goedel{\beta}{\gamma}}}{\alpha<\crit{j}, ~ \beta<2^{\betrag{\alpha}}, \gamma\in s_\alpha(\beta)} ~ \in ~ \POT{\crit j}^M.$$ Elementarity implies that $\delta$ is a strong limit cardinal in $M$, and the above computations show that $j(x)\cap\delta$ is an element of $M$.  Assume for a contradiction that $\delta$ is not a strong limit cardinal. Pick a cardinal $\nu<\delta$ with $2^\nu\geq\delta$. Then the injection $\map{j(s)_\nu\restriction\delta}{\delta}{\POT{\nu}}$ can be defined from $j(x)\cap\delta$, and therefore this function is contained in $M$, a contradiction. Since the above computations show that the sequence $\seq{j(s)_\alpha}{\alpha<\delta}$ can be defined from the subset $j(x)\cap\delta$ of $\delta$ and this subset is contained in $M$, it follows that $\HH{\delta}$ is an element of $M$. 
\end{proof}

\begin{corollary}
 Let $\kappa$ be a supercompact cardinal. 
 \begin{enumerate}[leftmargin=0.7cm]
  \item Let $\lambda\geq\kappa$ be a cardinal and let $\theta>2^{(\lambda^{{<}\kappa})}$ be a sufficiently large cardinal such that there is a small embedding $\map{j}{M}{\HH{\theta}}$ for $\kappa$ witnessing the supercompactness of $\kappa$, as in statement (ii) of Corollary \ref{corollary:SupercompactMagidorStyle}. If $\lambda\in\ran{j}$, then $j$ witnesses the $\lambda$-supercompactness of $\kappa$, as in statement (ii) of Lemma \ref{lemma:SupercompactLevelSmallChar}. 
 
  \item Let $\theta>\kappa$ be a cardinal, let $\lambda\geq\theta$ be a strong limit cardinal and let $\vartheta$ be sufficiently large such that there exists a small embedding $\map{j}{M}{\HH{\vartheta}}$ for $\kappa$ witnessing the $\lambda$-supercompactness of $\kappa$, as in statement (ii) of Lemma \ref{lemma:SupercompactLevelSmallChar}. If there is a $\delta\in M$ with $j(\delta)=\theta$, then $\map{j\restriction\HH{\delta}^M}{\HH{\delta}}{\HH{\theta}}$ witnesses the supercompactness of $\kappa$, as in statement (ii) of Corollary \ref{corollary:SupercompactMagidorStyle}.  \qed
 \end{enumerate}
\end{corollary}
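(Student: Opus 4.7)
For statement (i), let $\delta = j^{-1}(\lambda) \in M \cap \kappa$ (using that $M = \HH{\delta^*}$ for some cardinal $\delta^* < \kappa$ per Corollary~\ref{corollary:SupercompactMagidorStyle}). The only substantive point needed for the criterion of Lemma~\ref{lemma:SupercompactLevelSmallChar} is that
$$U ~ := ~ \Set{A \in \POT{\Poti{\delta}{\crit{j}}}^M}{j[\delta] \in j(A)} ~ \in ~ M.$$
The hypothesis $\theta > 2^{\lambda^{<\kappa}}$ yields $\POT{\Poti{\lambda}{\kappa}} \in \HH{\theta}$, and elementarity transfers this to $\POT{\Poti{\delta}{\crit{j}}}^M \in M$. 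Since $U$ is a subset of this element and $\HH{\delta^*}$ is closed under subsets of its elements (any $Y \subseteq X \in \HH{\delta^*}$ satisfies $\tc{Y} \subseteq \tc{X}$ and hence $|\tc{Y}| < \delta^*$), we conclude $U \in M$.

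For statement (ii), let $\delta_\lambda = j^{-1}(\lambda) \in M \cap \kappa$, which exists because $j$ witnesses $\lambda$-supercompactness via Lemma~\ref{lemma:SupercompactLevelSmallChar}. Since $\lambda$ is a strong limit, Proposition~\ref{proposition:ClosureLambdaSupercompactSEC} yields that $\delta_\lambda$ is a strong limit cardinal in $M$ and that $\HH{\delta_\lambda}$, computed in $V$, agrees with $\HH{\delta_\lambda}^M$ and is an element of $M$. Writing $\delta$ for the given preimage of $\theta$, we have $\crit{j} < \delta \leq \delta_\lambda$. Elementarity makes $\delta$ a cardinal in $M$, and any $V$-bijection between $\delta$ and a smaller ordinal would already lie in $\HH{\delta_\lambda}^V \subseteq M$; so $\delta$ is in fact a cardinal in $V$, and the same cardinal-counting argument upgrades to the equality $\HH{\delta}^V = \HH{\delta}^M$.

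It remains to verify that $j' := j \restriction \HH{\delta}^M$ is a small embedding as in Corollary~\ref{corollary:SupercompactMagidorStyle}. The elementarity of $j \colon M \to \HH{\vartheta}$ combined with the identifications $\HH{\delta}^M = \HH{\delta}^V$ and $\HH{\theta}^{\HH{\vartheta}} = \HH{\theta}^V$ (the latter holding because $\vartheta$ is sufficiently large) transfers by standard relativization to the elementarity of $\map{j'}{\HH{\delta}}{\HH{\theta}}$. Its critical point is $\crit{j} < \delta < \kappa$ with $j'(\crit{j}) = \kappa$, and the inaccessibility of $\kappa$ gives $\HH{\delta} \in \HH{\theta}$. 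Hence $j'$ is a small embedding for $\kappa$ with domain $\HH{\delta}$ for the $V$-cardinal $\delta < \kappa$, exactly as required. The principal obstacle is the identification $\HH{\delta}^V = \HH{\delta}^M$ in part (ii); once Proposition~\ref{proposition:ClosureLambdaSupercompactSEC} supplies $\HH{\delta_\lambda}^V = \HH{\delta_\lambda}^M \in M$, everything else is routine elementarity bookkeeping, and part (i) reduces to the trivial observation that $\HH{\delta^*}$ is closed under subsets of its elements.
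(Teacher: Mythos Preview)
Your proposal is correct and follows precisely the route the paper intends: the corollary is stated without proof (marked \qed) because part (i) reduces to the closure of $M=\HH{\delta^*}$ under subsets of its elements, and part (ii) reduces to Proposition \ref{proposition:ClosureLambdaSupercompactSEC} supplying $\HH{\delta_\lambda}\in M$ and hence $\HH{\delta}^M=\HH{\delta}$. You have filled in exactly the details the authors left implicit, with no deviation in approach.
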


In the remainder of this section, we turn our attention to \emph{huge} cardinals. Remember that, given $0<n<\omega$, an uncountable cardinal $\kappa$ is \emph{$n$-huge} if there is a sequence $\kappa=\lambda_0<\lambda_1<\ldots<\lambda_n$ of cardinals and a $\kappa$-complete normal ultrafilter $U$ on $\POT{\lambda_n}$ with $\Set{a\in\POT{\lambda_n}}{\otp{a\cap\lambda_{i+1}}=\lambda_i}\in U$ for all $i<n$.  A cardinal  is \emph{huge} if it is $1$-huge. Note that, if $\lambda_0<\lambda_1<\ldots<\lambda_n$ and $U$ witness the $n$-hugeness of $\kappa$ and $\map{j_U}{\VV}{\Ult{\VV}{U}}$  is the induced ultrapower embedding, then $\crit{j_U}=\kappa$, $j_U(\lambda_i)=\lambda_{i+1}$ for all $i<n$, $U=\Set{A\in\POT{\POT{\lambda_n}}}{j_U[\lambda_n]\in j_U(A)}$ and $\Ult{\VV}{U}$ is closed under $\lambda_n$-sequences.  In particular, each $\lambda_i$ is measurable. 
Moreover, since $U$ concentrates on the subset  $[\lambda_n]^{\lambda_{n-1}}$ of all subsets of $\lambda_n$ of order-type $\lambda_{n-1}$, we may as well identify $U$ with an ultrafilter on this set of size   $\lambda_n$.

\begin{lemma}\label{lemma:CharacterizeNHuge}
  Given $0<n<\omega$, the following statements are equivalent for all cardinals $\kappa$: 
 \begin{enumerate}[leftmargin=0.7cm]
  \item $\kappa$ is $n$-huge. 

   \item For all sufficiently large cardinals $\theta$, there is a small embedding $\map{j}{M}{\HH{\theta}}$ for $\kappa$ such that $j^i(\crit{j})\in M$ for all $i\leq n$ and  $$\Set{A\in\POT{\POT{j^n(\crit{j})}}^M}{j[j^n(\crit{j})]\in j(A)} ~ \in ~ M.$$ 
 \end{enumerate} 
\end{lemma}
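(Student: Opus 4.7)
The plan is to follow the same two-step pattern as the small embedding characterizations of measurable cardinals (Lemma \ref{lemma:MeasureableSmallChar}) and $\lambda$-supercompact cardinals (Lemma \ref{lemma:SupercompactLevelSmallChar}), now adapted to capture all iterated images $j^i(\crit{j})$ for $i\leq n$. The main new ingredient is the bookkeeping of the $n+1$-term sequence.

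For the forward direction (i) $\Rightarrow$ (ii), suppose $\kappa$ is $n$-huge, witnessed by $\kappa=\lambda_0<\lambda_1<\ldots<\lambda_n$ and a normal ultrafilter $U$ on $\POT{\lambda_n}$. Let $\map{j_U}{\VV}{\Ult{\VV}{U}}$ be the induced ultrapower embedding, so $j_U(\lambda_i)=\lambda_{i+1}$ for $i<n$ and $\Ult{\VV}{U}$ is closed under $\lambda_n$-sequences in $\VV$. Pick a sufficiently large cardinal $\theta$ with $U\in\HH{\theta}$ and form an elementary submodel $X\prec \HH{\theta}$ of cardinality $\lambda_n$ with $\{U\}\cup(\lambda_n+1)\subseteq X$. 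Let $\map{\pi}{X}{N}$ denote the transitive collapse; note that $\pi$ fixes every ordinal $\leq\lambda_n$, hence in particular each $\lambda_i$. Setting $k=j_U\circ\pi^{{-}1}$, the closure of $\Ult{\VV}{U}$ under $\lambda_n$-sequences gives $k,N\in\Ult{\VV}{U}$, and in $\Ult{\VV}{U}$ the map $\map{k}{N}{\HH{j_U(\theta)}^{\Ult{\VV}{U}}}$ is a small embedding for $j_U(\kappa)=\lambda_1$ with $\crit{k}=\kappa$. Because $\pi$ fixes each $\lambda_i$, one computes $k^i(\crit{k})=\lambda_i\in N$ for every $i\leq n$, and a direct calculation shows that $\Set{A\in\POT{\POT{\lambda_n}}^N}{k[\lambda_n]\in k(A)}=\pi(U)\in N$. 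Applying elementarity of $j_U$ backwards then yields a small embedding in $\VV$ with the required properties.

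For the reverse direction (ii) $\Rightarrow$ (i), given a small embedding $\map{j}{M}{\HH{\theta}}$ as in (ii) for a suitably large $\theta$, set $\lambda_i=j^i(\crit{j})$ for $i\leq n$, so that each $\lambda_i\in M$ and $j(\lambda_i)=\lambda_{i+1}$ for $i<n$. Let $U_0\in M$ be the set appearing in (ii). I would verify inside $M$ that $U_0$ is a $\crit{j}$-complete normal ultrafilter on $\POT{\lambda_n}^M$ which concentrates on $\Set{a\in\POT{\lambda_n}}{\otp{a\cap\lambda_{i+1}}=\lambda_i}$ for each $i<n$. The ultrafilter, fineness, and normality conditions are routine from the equivalence $A\in U_0 \Leftrightarrow j[\lambda_n]\in j(A)$. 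The key new computation is that for the concentration set $B_i$, one has
$$j[\lambda_n]\cap j(\lambda_{i+1}) ~ = ~ j[\lambda_{i+1}],$$
which has order type $\lambda_{i+1}=j(\lambda_i)$; hence $j[\lambda_n]\in j(B_i)$ and so $B_i\in U_0$. Thus $U_0$ witnesses the $n$-hugeness of $\crit{j}$ in $M$ with sequence $\crit{j}<\lambda_1<\ldots<\lambda_n$. Since $\theta$ is large enough that $\HH{\theta}$ correctly recognizes normal ultrafilters on $\POT{j(\lambda_n)}$, applying elementarity to the statement \lanf $U_0$ witnesses $n$-hugeness of $\crit{j}$\ranf{} shows that $j(U_0)$ is a normal ultrafilter in $\VV$ witnessing the $n$-hugeness of $\kappa=j(\crit{j})$ with sequence $\kappa<j(\lambda_1)<\ldots<j(\lambda_n)$.

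The main technical obstacle is a minor one: ensuring in the forward direction that the collapse $\pi$ fixes the entire sequence $\lambda_0,\ldots,\lambda_n$ (which forces the choice $(\lambda_n+1)\subseteq X$ and $\betrag{X}=\lambda_n$), and in the reverse direction carrying out the order-type computation $\otp{j[\lambda_n]\cap j(\lambda_{i+1})}=\lambda_{i+1}=j(\lambda_i)$ cleanly enough to match the definition of $n$-hugeness. Everything else parallels the measurable and $\lambda$-supercompact cases already treated.
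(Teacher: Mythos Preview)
Your proposal is correct and follows essentially the same approach as the paper: in both directions it mirrors the measurable/supercompact proofs, constructing $k=j_U\circ\pi^{-1}$ from a size-$\lambda_n$ hull in the forward direction and reading off the $n$-hugeness witness $j(U_0)$ by elementarity in the reverse direction.

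One point where the paper is slightly more careful than your write-up: in the reverse direction you assert that ``$\theta$ is large enough that $\HH{\theta}$ correctly recognizes normal ultrafilters on $\POT{j(\lambda_n)}$''. This cannot be arranged in advance, since $j(\lambda_n)$ depends on the embedding $j$, which in turn depends on $\theta$. The paper resolves this circularity by observing that $j(\lambda_n)$ is inaccessible (by elementarity, since $\lambda_n$ is measurable in $M$), so that $[j(\lambda_n)]^{\lambda_n}$ has size $j(\lambda_n)<\theta$ and hence $\POT{[j(\lambda_n)]^{\lambda_n}}\subseteq\HH{\theta}$ automatically; since $j(U_0)$ concentrates on $[j(\lambda_n)]^{\lambda_n}$, this suffices for $j(U_0)$ to be a genuine ultrafilter in $\VV$. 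You should make this step explicit.
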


\begin{proof}
  First, assume that $\lambda_0<\lambda_1<\ldots<\lambda_n$ and $U$ witness the $n$-hugeness of $\kappa$ and let $\map{j_U}{\VV}{\Ult{\VV}{U}}$ denote the corresponding ultrapower embedding. Pick a cardinal $\theta$ with $U\in\HH{\theta}$ and an elementary submodel $X$ of $\HH{\theta}$ of cardinality $\lambda_n$ with $\HH{\lambda_n}\cup\{U\}\subseteq X$. Let $\map{\pi}{X}{N}$ denote the corresponding transitive collapse. Then $k=\map{j_U\circ\pi}{N}{\HH{j_U(\theta)}^{\Ult{\VV}{U}}}$ is a non-trivial elementary embedding and $\pi^{{-}1}\restriction\HH{\lambda_n}=\id_{\HH{\lambda_n}}$ implies that $\crit{k}=\kappa$, $k[\lambda_n]=j_U[\lambda_n]$ and $k(\lambda_i)=\lambda_{i+1}$ for all $i<n$. Since $N\in\HH{\lambda_n^+}\subseteq\Ult{\VV}{U}$, the closure of $\Ult{\VV}{U}$ under $\lambda$-sequences implies that $k$ is an element of $\Ult{\VV}{U}$ and the above computations show that $k$ is a small embedding for $j_U(\kappa)$ in $\Ult{\VV}{U}$. Then $$k[\lambda_n]\in k(A) ~ \Longleftrightarrow ~  j_U[\lambda_n]\in j_U(\pi^{{-}1}(A)) ~ \Longleftrightarrow ~ \pi^{{-}1}(A)\in U ~ \Longleftrightarrow ~ A\in\pi(U).$$
for all $A\in\POT{\POT{\lambda_n}}^N$. This shows that $$\pi(U) ~ = ~ \Set{A\in\POT{\POT{k^n(\crit{k})}}^N}{k[k^n(\crit{k})]\in k(A)} ~ \in ~ N$$ and, by elementarity, we find a small embedding for $\kappa$ as in (ii).  

  Now, assume that (ii) holds. Fix a sufficiently large cardinal $\theta$ and a small embedding $\map{j}{M}{\HH{\theta}}$ for $\kappa$ as in (ii). Let  $U$ denote the set of all $A$ in $\POT{\POT{j^n(\crit{j})}}^M$ with $j[j^n(\crit{j})]\in j(A)$ and set $\lambda_i=j^i(\crit{j})$ for all $i\leq n$.  Then $U$ is an element of $M$ and our assumptions imply that $U$ is a $\crit{j}$-complete, normal ultrafilter on $\POT{\lambda_n}$ with $\Set{a\in\POT{\lambda_n}^M}{\otp{a\cap\lambda_{i+1}}=\lambda_i}\in U$ for all $i<n$. Hence $\lambda_0<\lambda_1<\ldots<\lambda_n$ and $U$ witness that $\crit{j}$ is an $n$-huge cardinal in $M$. This allows us to conclude that $\lambda_1<\ldots<\lambda_n<j(\lambda_n)$ and $j(U)$ witness that $\crit{j}$ is an $n$-huge cardinal in $\HH{\theta}$. Then $j(U)$ concentrates on the subset $[j(\lambda_n)]^{\lambda_n}$ of $\POT{j(\lambda_n)}$. Since $j(\lambda_n)$ is inaccessible and therefore $\POT{[j(\lambda_n)]^{\lambda_n}}$ is contained in $\HH{\theta}$, we can conclude that $\kappa$ is $n$-huge. 
\end{proof}

The next lemma shows that the domain models of small embeddings witnessing $n$-hugeness also possess certain closure properties. These closure properties will directly imply that these  embeddings also witness weaker large cardinal properties.

\begin{lemma}\label{hugeclosure}
 Let $0<n<\omega$, let $\kappa$ be an $n$-huge cardinal and let $\map{j}{M}{\HH{\theta}}$ be a small embedding for $\kappa$ witnessing the $n$-hugeness of $\kappa$, as in statement (ii) of  Lemma \ref{lemma:CharacterizeNHuge}. Then $\POT{j^n(\crit{j})}\cap\ran{j}$ is contained in $M$. In particular, $\HH{j^n(\crit{j})}$ is an element of $M$. 
\end{lemma}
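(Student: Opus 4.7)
The plan is to use the normal ultrafilter $U = \{A\in\POT{\POT{\lambda_n}}^M : j[\lambda_n]\in j(A)\}\in M$, where I abbreviate $\lambda_i=j^i(\crit{j})$ for $i\le n$ and $\lambda_{n+1}=j(\lambda_n)$, to reconstruct every subset of $\lambda_n$ in $\ran{j}$ from inside $M$. As observed in the proof of Lemma \ref{lemma:CharacterizeNHuge}, in $M$ the ultrafilter $U$ witnesses the $n$-hugeness of $\crit{j}$ with critical sequence $\crit{j} < \lambda_1 < \ldots < \lambda_n$; in particular, $U$ concentrates on the $M$-family $X$ of subsets of $\lambda_n$ of order type $\lambda_{n-1}$, each of which comes equipped (in $M$) with its order isomorphism $\pi_a:\lambda_{n-1}\to a$.

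Given $x\in\POT{\lambda_n}\cap\ran{j}$, pick $y\in M$ with $j(y)=x$; elementarity and $j(\lambda_{n-1})=\lambda_n$ force $y\subseteq\lambda_{n-1}$ in $M$. For each $\beta<\lambda_n$ set $A_\beta=\{a\in X : \beta\in a \wedge \pi_a^{-1}(\beta)\in y\}$, and define $z=\{\beta<\lambda_n : A_\beta\in U\}$; both are definable in $M$ from $U$, $y$, $\lambda_n$ and $\lambda_{n-1}$, so $z\in M$. The claim is that $z=x$. By definition of $U$, $\beta\in z$ iff $j[\lambda_n]\in j(A_\beta)$; applying $j$ elementarily to the defining formula of $A_\beta$ and evaluating at $a=j[\lambda_n]$, this unwinds to the conjunction: $j[\lambda_n]$ is a subset of $\lambda_{n+1}$ of order type $\lambda_n$ (true, since $j$ is order-preserving on ordinals), $j(\beta)\in j[\lambda_n]$ (trivial), and $\pi_{j[\lambda_n]}^{-1}(j(\beta))\in j(y)$. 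The crucial point is that the order isomorphism $\pi_{j[\lambda_n]}:\lambda_n\to j[\lambda_n]$ must coincide with $j\restriction\lambda_n$, because the latter is already an order isomorphism onto $j[\lambda_n]$; hence $\pi_{j[\lambda_n]}^{-1}(j(\beta))=\beta$ and the condition reduces to $\beta\in j(y)$. Thus $z=j(y)=x$, and therefore $x\in M$.

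For the \emph{in particular} clause, note that $\lambda_{n-1}$ is inaccessible in $M$ (being on the critical sequence of the $n$-huge ultrafilter $U$), so in $M$ we may fix a bijection $g:\lambda_{n-1}\to\HH{\lambda_{n-1}}^M$ together with a pairing function on $\lambda_{n-1}$, and form $G=\{\goedel{\alpha}{\gamma} : g(\alpha)\in g(\gamma)\}\subseteq\lambda_{n-1}$, an element of $M$ that codes the $\in$-relation on $\HH{\lambda_{n-1}}^M$. Then $j(G)\subseteq\lambda_n$ lies in $\ran{j}$, hence in $M$ by the first part. By elementarity, $j(G)$ codes $(\HH{\lambda_n},\in)$ in $\HH{\theta}$; the Mostowski collapse of an extensional well-founded relation is absolute, so $M$ computes the collapse of $(\lambda_n,\in_{j(G)})$ to be the unique transitive set of the right isomorphism type, namely $\HH{\lambda_n}$, yielding $\HH{\lambda_n}\in M$. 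The main technical obstacle is the middle computation---identifying $\pi_{j[\lambda_n]}$ with $j\restriction\lambda_n$ and correctly translating $j[\lambda_n]\in j(A_\beta)$ through elementarity; once that is pinned down, the rest of the argument is essentially bookkeeping.
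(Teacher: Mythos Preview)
Your proof is correct and follows essentially the same approach as the paper. Your use of the order isomorphism $\pi_a$ and the identity $\pi_{j[\lambda_n]}^{{-}1}(j(\beta))=\beta$ is exactly the paper's computation $\otp{j[\gamma]}=\gamma$ in disguise (since $\pi_a^{{-}1}(\beta)=\otp{a\cap\beta}$ for $\beta\in a$), and your treatment of the \emph{in particular} clause via a coding subset of $\lambda_n$ and Mostowski collapse expands on what the paper merely sketches in one sentence.
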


\begin{proof}
 Fix $A\in\POT{j^{n-1}(\crit{j})}^M$. Given $\gamma<j^n(\crit{j})$, define $$A_\gamma ~ = ~ \Set{a\in\POT{j^n(\crit{j})}^M}{\gamma\in a, ~ \otp{a\cap\gamma}\in A}.$$ 
For each $\gamma<j^n(\crit{j})$, we then have 
\begin{equation*}
 \begin{split}
  A_\gamma\in U ~ & \Longleftrightarrow ~ j[j^n(\crit{j})]\in j(A_\gamma) ~ \Longleftrightarrow ~ \otp{j(\gamma)\cap j[j^n(\crit{j})]}\in j(A) \\
  & \Longleftrightarrow ~ \otp{j[\gamma]}\in j(A) ~ \Longleftrightarrow ~ \gamma \in j(A).
 \end{split}
\end{equation*}
This shows that $j(A)$ is equal to the set $\Set{\gamma<j^n(\crit{j})}{A_\gamma\in U}$. Since the sequence $\seq{A_\gamma}{\gamma<j^n(\crit{j})}$ is an element of $M$, this shows that $j(A)\in M$. 

The final statement of the lemma follows from the fact that elementarity implies that there is a subset of $j^n(\crit{j})$ in $\ran{j}$ that codes all elements of $\HH{j^n(\crit{j})}$. 
\end{proof}

\begin{corollary}
  Let $0<n<\omega$, let $\kappa$ be an $n$-huge cardinal and let $\theta$ be a sufficiently large cardinal such that there is a small embedding $\map{j}{M}{\HH{\theta}}$ for $\kappa$ witnessing the $n$-hugeness of $\kappa$, as in statement (ii) of  Lemma \ref{lemma:CharacterizeNHuge}. 
  \begin{enumerate}[leftmargin=0.7cm]
    \item If $0<m<n$, then $j$ also witnesses the $m$-hugeness of $\kappa$, as in statement (ii) of  Lemma \ref{lemma:CharacterizeNHuge}. 
    
    \item If $\kappa\leq \lambda<j(\kappa)$ with $\lambda\in\ran{j}$, then $j$ also witnesses the $\lambda$-supercompactness of $\kappa$, as in statement (ii) of Lemma \ref{lemma:SupercompactLevelSmallChar}. \qed
   \end{enumerate}
\end{corollary}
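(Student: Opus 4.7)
The plan is to handle both parts uniformly by projecting the $n$-huge measure
\[
U \;=\; \{\,B \in \POT{\POT{\lambda_n}}^M \;:\; j[\lambda_n] \in j(B)\,\} \;\in\; M
\]
(writing $\lambda_i = j^i(\crit{j})$, as in the proof of Lemma~\ref{lemma:CharacterizeNHuge}) down to the required smaller support, by means of a single set-theoretic identity. The identity I would verify at the outset is that, since $j$ is strictly increasing on ordinals, for every ordinal $x \leq \lambda_n$ in $M$ one has
\[
j[\lambda_n] \cap j(x) \;=\; \{\,j(\alpha) \;:\; \alpha<\lambda_n,\; j(\alpha)<j(x)\,\} \;=\; \{\,j(\alpha) \;:\; \alpha<x\,\} \;=\; j[x].
\]

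For statement (i), fix $0<m<n$; the hypothesis $j^i(\crit{j}) \in M$ for $i\leq n$ already gives the same for $i\leq m$. I would associate to each $A \in \POT{\POT{\lambda_m}}^M$ the set
\[
\tilde A \;=\; \{\,a\in\POT{\lambda_n}^M \;:\; a \cap \lambda_m \in A\,\} \;\in\; M.
\]
By elementarity, $j(\tilde A) = \{a\in\POT{j(\lambda_n)}^{\HH{\theta}} : a \cap j(\lambda_m) \in j(A)\}$, and the displayed identity with $x = \lambda_m$ then yields $j[\lambda_n] \in j(\tilde A) \Longleftrightarrow j[\lambda_m] \in j(A)$. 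Therefore
\[
\{A\in\POT{\POT{\lambda_m}}^M : j[\lambda_m]\in j(A)\} \;=\; \{A : \tilde A \in U\},
\]
which is definable in $M$ from $U$ and hence lies in $M$. This is precisely the set required by statement (ii) of Lemma~\ref{lemma:CharacterizeNHuge} for $m$-hugeness.

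For statement (ii), since $\lambda \in \ran{j}$ with $\lambda < j(\kappa) = j(j(\crit{j}))$, I would pick $\delta \in M$ with $j(\delta) = \lambda$; elementarity together with $\kappa \leq \lambda < j(\kappa)$ forces $\crit{j} \leq \delta < \kappa$, supplying the $\delta \in M \cap \kappa$ demanded by statement (ii) of Lemma~\ref{lemma:SupercompactLevelSmallChar}. To each $A \in \POT{\POTI{\delta}{\crit{j}}}^M$ I now associate
\[
\hat A \;=\; \{\,a\in\POT{\lambda_n}^M \;:\; a \cap \delta \in A\,\} \;\in\; M,
\]
observing that the clause $a \cap \delta \in A$ already forces $|a\cap\delta|<\crit{j}$, since $A \subseteq \POTI{\delta}{\crit{j}}$. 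Elementarity again gives $j(\hat A) = \{a\in\POT{j(\lambda_n)}^{\HH{\theta}} : a \cap j(\delta) \in j(A)\}$, while the identity with $x=\delta$ yields $j[\lambda_n] \cap j(\delta) = j[\delta]$, and $|j[\delta]|=\delta<\kappa$ ensures $j[\delta] \in j(\POTI{\delta}{\crit{j}}) = \POTI{\lambda}{\kappa}$. Consequently $j[\lambda_n] \in j(\hat A) \Longleftrightarrow j[\delta] \in j(A)$, so
\[
\{A\in\POT{\POTI{\delta}{\crit{j}}}^M : j[\delta]\in j(A)\} \;=\; \{A : \hat A \in U\} \;\in\; M,
\]
the witness required for $\lambda$-supercompactness.

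The whole argument really only turns on the identity $j[\lambda_n]\cap j(x) = j[x]$, and both projections $A\mapsto \tilde A$ and $A\mapsto \hat A$ are entirely routine inside $M$. The main temptation I would deliberately avoid is to work through the ultrapower $\Ult{M}{U}$ and a factoring $j = k \circ j_U^M$: that route rapidly entangles one with the question of how far above $\crit{j}$ the critical point of $k$ sits (and, in part (ii), whether $k$ fixes $\delta$), whereas the direct projection via $U$ sidesteps this issue completely.
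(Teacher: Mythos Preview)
Your proof is correct and follows the standard projection argument: the identity $j[\lambda_n]\cap j(x)=j[x]$ together with the definability in $M$ of $A\mapsto\tilde A$ (resp.\ $A\mapsto\hat A$) from $U$, $\lambda_m$ (resp.\ $\delta$), and $\lambda_n$ yields exactly the required sets. The paper gives no proof for this corollary, presenting it as an immediate consequence of the preceding closure result (Lemma~\ref{hugeclosure}); your direct projection of $U$ is precisely the routine computation this is meant to abbreviate.
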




\section{Internally Supercompact Cardinals}\label{section:internal}

In the following sections of this paper, we will formulate set-theoretic principles, called \emph{internal large cardinals}, that capture properties of large cardinals that accessible cardinals can possess and that are motivated by the above small embedding characterizations. In addition, we will use the theory developed in the earlier sections to derive the consistency of these principles. These principles are motivated by the observation that, in many cases, the small embeddings characterizing certain large cardinals may be lifted to suitable forcing extensions, and that those lifted embeddings retain most of the combinatorial properties of the original small embeddings. Our principles then describe the properties of these lifted embeddings to capture a strong fragment of what is left of certain large cardinals after their inaccessibility has been destroyed.  The name \emph{internal large cardinals} was chosen since the domain models of the  embeddings constructed in the consistency proofs of these principles will usually be elements of some intermediate forcing extension, i.e.\ they are \emph{internal} to the actual forcing extension. 
 In the remainder of this paper, we will study certain internal versions of ineffable, subtle and supercompact cardinals and their relationship to certain generalized tree properties. An extensive study of internal versions of many large cardinal properties and their consequences will be contained in the upcoming \cite{hl}.

In this paper, we will limit ourselves to the case of internal large cardinals with respect to forcing notions that satisfy the $\omega_1$-approximation property. For this reason, we recall the definition of this property.

\begin{definition}\label{definition:appcover}
  Given transitive classes $M\subseteq N$, the pair $(M,N)$ satisfies the \emph{$\omega_1$-approxi\-ma\-tion property} if $A\in M$ whenever $A\in N$ is such that $A\subseteq B$ for some $B\in M$, and $A\cap x\in M$ for every $x\in M$ which is countable in $M$.\footnote{In case $M$ and $N$ have the same ordinals and satisfy enough set theory, this definition is equivalent to the more common definition of the $\omega_1$-approximation property where rather than requiring $A\subseteq B$ for some $B\in M$, one only requires that $A\subseteq M$.
} 
\end{definition}

The results of this section are supposed to demonstrate that this approach leads to fruitful concepts by showing that the internal large cardinal notion corresponding to Magidor's small embedding characterization of supercompactness in Corollary \ref{corollary:SupercompactMagidorStyle} and the $\omega_1$-approximation property is equivalent to a well-studied generalized tree property.

 \begin{definition}
  A cardinal $\kappa$ is \emph{internally AP supercompact} if for all sufficiently large regular cardinals $\theta$ and all $x\in\HH{\theta}$, there is a small embedding $\map{j}{M}{\HH{\theta}}$ for $\kappa$ and a transitive model $N$ of $\ZFC^-$ such that $x\in\ran j$ and the following statements hold: 
  \begin{enumerate}[leftmargin=0.7cm]
   \item $N\subseteq\HH{\theta}$ and the pair $(N,\HH{\theta})$ satisfies the $\omega_1$-approximation property. 
   
   \item $M\in N$ and $M=\HH{\delta}^N$ for some $N$-cardinal $\delta<\kappa$. 
  \end{enumerate}
\end{definition}

%
%
%

In the remainder of this section, we will use results of Matteo Viale and Christoph Wei\ss\ from \cite{MR2838054} to show that internal AP supercompactness is equivalent to a generalized tree property. Since the results of \cite{MR2838054} show that the \emph{Proper Forcing Axiom $\PFA$} implies this tree property for $\omega_2$, these arguments will also yield a consistency proof for the internal AP supercompactness of $\omega_2$. In order to formulate these results, we need to recall the definitions of \emph{slender lists} and \emph{ineffable tree properties}.  The concept of \emph{slenderness} originates from work of Saharon Shelah, and was isolated and studied by Christoph Wei\ss\ in \cite{weissthesis}. 

\begin{definition}
 Let $\kappa$ be an uncountable regular cardinal and let $\lambda\geq\kappa$ be a cardinal. 
 \begin{enumerate}[leftmargin=0.7cm]
  \item A $\Poti{\lambda}{\kappa}$-list $\seq{d_a}{a\in\Poti{\lambda}{\kappa}}$ is \emph{slender} if for every sufficiently large cardinal $\theta$, there is a club $C$ in $\Poti{\HH{\theta}}{\kappa}$ with $b\cap d_{X\cap\lambda}\in X$ for all $X\in C$ and all $b\in X\cap\Poti{\lambda}{\omega_1}$.  
  
  \item $\ISP(\kappa,\lambda)$ is the statement that for every slender $\Poti{\lambda}{\kappa}$-list $\seq{d_a}{a\in\Poti{\lambda}{\kappa}}$, there exists $D\subseteq\lambda$ such that the set $\Set{a\in\Poti{\lambda}{\kappa}}{d_a=D\cap a}$ is stationary in $\Poti{\lambda}{\kappa}$.
 \end{enumerate}
\end{definition}

The following lemma is the main result of this section.

\begin{lemma}\label{lemma:intscisp}
  A regular cardinal $\kappa>\omega_1$ is internally approximating supercompact if and only if $\ISP(\kappa,\lambda)$ holds for all cardinals $\lambda\geq\kappa$.
\end{lemma}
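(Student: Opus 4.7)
The plan is to prove both directions, with the forward implication mirroring the proof of Lemma \ref{lemma:LambdaIneffableSmallChar} for $\lambda$-ineffability (with slenderness and $\omega_1$-approximation taking the place of the full list hypothesis), and the backward implication using the machinery of \cite{MR2838054} that relates $\ISP$ to the existence of guessing models.

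For the forward direction, fix a cardinal $\lambda\geq\kappa$, a slender $\Poti{\lambda}{\kappa}$-list $\vec{d}=\seq{d_a}{a\in\Poti{\lambda}{\kappa}}$, and a cardinal $\theta$ large enough that a slenderness club $C$ for $\vec{d}$ is definable in $\HH{\theta}$. Apply internal AP supercompactness with $x=(\lambda,\vec{d})$ to obtain a small embedding $\map{j}{M}{\HH{\theta}}$ for $\kappa$ with $\lambda,\vec{d}\in\ran{j}$, together with an intermediate $N\subseteq\HH{\theta}$ having $M=\HH{\delta}^N$ for some $N$-cardinal $\delta<\kappa$ and $(N,\HH{\theta})$ satisfying the $\omega_1$-approximation property; by using Lemma \ref{lemma:addxtorange} and taking $\theta$ sufficiently large, we may moreover assume $\ran{j}\in C$. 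Set $\tilde\lambda=j^{-1}(\lambda)$ and $d^*=j^{-1}[d_{j[\tilde\lambda]}]\subseteq\tilde\lambda$.

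The key claim is that $d^*\in M$. Any $b\in\ran{j}\cap\Poti{\lambda}{\omega_1}$ is of the form $b=j[c]$ for some countable $c\in M$ with $c\subseteq\tilde\lambda$ (here we use $\ran{j}\cap\lambda=j[\tilde\lambda]$ and that $j$ agrees with $j[\cdot]$ on countable sets of ordinals). Slenderness gives $j[c]\cap d_{j[\tilde\lambda]}=j[c\cap d^*]\in\ran{j}$, and the injectivity of $j$ on $\tilde\lambda$ then yields $c\cap d^*\in M$. Now let $x\in N$ be countable in $N$; we may assume $x\subseteq\tilde\lambda$, and since $|x|^N=\aleph_0$ and $x$ has transitive closure bounded in $\delta$, it follows that $x\in\HH{\delta}^N=M$. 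Hence by the previous step $x\cap d^*\in M\subseteq N$, so the $\omega_1$-approximation property of $(N,\HH{\theta})$ applied to $d^*\subseteq\tilde\lambda\in N$ gives $d^*\in N$, and a size argument (again using $M=\HH{\delta}^N$) yields $d^*\in M$. Setting $D=j(d^*)\subseteq\lambda$, the stationarity of $\Set{a\in\Poti{\lambda}{\kappa}}{d_a=D\cap a}$ follows exactly as in the second half of the proof of Lemma \ref{lemma:LambdaIneffableSmallChar}, using the elementarity of $j$ and that $\crit{j}\in C$ in a club-reflection argument.

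For the backward direction, I would invoke the characterization of $\ISP(\kappa,\lambda)$ in \cite{MR2838054}: assuming $\ISP(\kappa,\lambda)$ for all $\lambda\geq\kappa$, for every sufficiently large regular $\theta$ the set of \emph{guessing models} $X\prec\HH{\theta}$ of size less than $\kappa$ with $X\cap\kappa\in\kappa$ is stationary in $\Poti{\HH{\theta}}{\kappa}$. Given $x\in\HH{\theta}$, pick such a guessing $X$ with $x\in X$, let $\map{\pi}{X}{M}$ be the transitive collapse, and set $j=\pi^{-1}$. The guessing property then provides an intermediate model $N\subseteq\HH{\theta}$ with $M\in N$, $M=\HH{\delta}^N$ for some $N$-cardinal $\delta<\kappa$, and $(N,\HH{\theta})$ satisfying the $\omega_1$-approximation property, giving internal AP supercompactness at $x$ and $\theta$. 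The main obstacle here is producing the intermediate model $N$ from the guessing model $X$; this is exactly the content of the Viale--Wei\ss\ bridge between guessing models and models with the $\omega_1$-approximation property, and is the place where the equivalence relies essentially on \cite{MR2838054}.
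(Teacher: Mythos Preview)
Your forward direction (internally AP supercompact $\Rightarrow$ $\ISP(\kappa,\lambda)$) is essentially correct and is a genuine alternative to the paper's route. The paper proves both directions by passing through the Viale--Wei{\ss} characterization of $\ISP$ in terms of guessing models (together with Proposition~\ref{proposition:collap}), whereas your argument is a direct one and is, in fact, precisely the argument the paper later gives for Lemma~\ref{lemma:InternalLambdaIneffableImpliesISP} in the $\lambda$-ineffable setting. One small correction: the appeal to Lemma~\ref{lemma:addxtorange} is unnecessary and not obviously applicable here (that lemma is formulated for downwards-absolute correctness properties, and the extra model $N$ does not fit that framework). You do not need it: the definition of internal AP supercompactness already quantifies over all $x\in\HH{\theta}$, so simply include in $x$ a function $f$ whose closure points generate the slenderness club $C$, and use elementarity to conclude $\ran{j}\in C$.

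Your backward direction, however, has a real gap. If you collapse a guessing model $X\prec\HH{\theta}$ directly to $M$ and set $j=\pi^{-1}$, there is no room left for the intermediate model $N$: the definition requires $M\in N$ and $M=\HH{\delta}^N$ for some $N$-cardinal $\delta$, and nothing in \cite{MR2838054} manufactures such an $N$ from $M$ alone. The ``bridge'' you cite is exactly Proposition~\ref{proposition:collap}, which only tells you that $(M,\HH{\theta})$ has the $\omega_1$-approximation property; it does not produce a proper extension $N$ of $M$. The missing idea, and what the paper actually does, is to first pass to a larger $\vartheta>\betrag{\HH{\theta}}$, take a guessing model $X\prec\HH{\vartheta}$ with $\kappa,\theta,x\in X$, and let $N$ be the transitive collapse of $X$. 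Then $M:=\HH{\pi(\theta)}^N$ is a proper initial segment of $N$, $j:=\pi^{-1}\restriction M\colon M\to\HH{\theta}$ is a small embedding for $\kappa$, and Proposition~\ref{proposition:collap} yields that $(N,\HH{\vartheta})$---hence also $(N,\HH{\theta})$---has the $\omega_1$-approximation property.
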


In order to prove this statement, we need to introduce more concepts from \cite{MR2838054}.

\begin{definition}
  Let $\vartheta$ be an uncountable cardinal and let $X\prec\HH{\vartheta}$. 
  \begin{enumerate}[leftmargin=0.7cm]
   \item A set $d$ is \emph{$X$-appro\-xi\-ma\-ted} if $b\cap d\in X$ for all $b\in X\cap\Poti{X}{\omega_1}$. 
   
   \item A set $d$ is \emph{$X$-guessed} if $d\cap X=e\cap X$ for some $e\in X$. 
   
   \item Given $\rho\in\On$, $X$ is a \emph{$\rho$-guessing model} if every $X$-approximated $d\subseteq\rho$ is $X$-guessed. 
   
   \item $X$ is  a \emph{guessing model} if it is $\rho$-guessing for every $\rho\in X\cap\On$. 
  \end{enumerate} 
\end{definition}

\begin{proposition}\label{proposition:collap}
  Let $\vartheta>\omega_1$ be a cardinal, let $X\prec \HH{\vartheta}$ 
and let $\map{\pi}{X}{M}$ be the corresponding transitive collapse. Then $X$ is a guessing model if and only if the pair $(M,\HH{\vartheta})$ satisfies the $\omega_1$-approximation property. 
\end{proposition}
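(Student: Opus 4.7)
The plan is to transfer between the guessing condition on $X$ and the $\omega_1$-approximation property for $(M,\HH{\vartheta})$ across the collapse $\pi:X\to M$, using the elementary embedding $\pi^{{-}1}:M\to\HH{\vartheta}$ inherited from $X\prec\HH{\vartheta}$. The key technical observation driving both directions is that for any $b\in X$, elementarity yields $\betrag{b}^X=\betrag{b}^{\HH{\vartheta}}=\betrag{b}^\VV$ (the latter equality using $\vartheta>\omega_1$), so any $b\in X\cap\Poti{X}{\omega_1}$ is countable inside $X$, whence $\pi(b)$ is countable in $M$; symmetrically, for $x\in M$ countable in $M$, the preimage $b:=\pi^{{-}1}(x)$ is countable in $X$, and a bijection $\omega\to b$ inside $X$ forces $b\subseteq X$.

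For the forward direction, I would start with $A\in\HH{\vartheta}$ satisfying the approximation hypothesis: $A\subseteq B$ for some $B\in M$ and $A\cap x\in M$ for every $x\in M$ countable in $M$. Using that $M$ satisfies enough of $\ZFC$, I fix a bijection $g:\alpha\to B$ in $M$ and replace $A$ by $g^{{-}1}[A]\subseteq\alpha$, which inherits the approximation hypothesis. Setting $\rho=\pi^{{-}1}(\alpha)\in X$ and $d:=\pi^{{-}1}[A]\subseteq\rho\cap X$ (pointwise), I verify that $d$ is $X$-approximated: for $b\in X\cap\Poti{X}{\omega_1}$, WLOG $b\subseteq\rho$; the core observation makes $\pi(b)$ countable in $M$, so $A\cap\pi(b)\in M$ by hypothesis, and then $\pi^{{-}1}(A\cap\pi(b))\in X$ is countable in $X$, hence a subset of $X$, and is equal to $b\cap d$ by a direct diagram chase. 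Guessing now produces $e\in X$ with $e\cap X=d\cap X=d$, and unfolding gives $\pi(e)=\pi[e\cap X]=\pi[d]=A$, so $A\in M$ as required.

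The reverse direction proceeds analogously. Given $\rho\in X\cap\On$ and an $X$-approximated $d\subseteq\rho$, I set $A:=\pi[d\cap X]\subseteq\pi(\rho)$ and verify the approximation hypothesis for $A$: for any $x\in M$ countable in $M$, WLOG $x\subseteq\pi(\rho)$, and the core observation yields $b:=\pi^{{-}1}(x)\in X\cap\Poti{X}{\omega_1}$ with $b\subseteq\rho$; then $b\cap d\in X$ by $X$-approximation, and $A\cap x=\pi(b\cap d)\in M$. The $\omega_1$-approximation property then forces $A\in M$, so $e:=\pi^{{-}1}(A)\in X$ satisfies $\pi[e\cap X]=A=\pi[d\cap X]$; injectivity of $\pi$ on $X$ yields $e\cap X=d\cap X$, witnessing that $d$ is $X$-guessed.

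The subtle point to track throughout is the distinction between the pointwise preimage $\pi^{{-}1}[\cdot]$ and the set-theoretic application $\pi^{{-}1}(\cdot)$ of the collapse; these coincide only when the preimage lies entirely inside $X$, which is precisely what the core cardinality observation guarantees for the sets appearing in the above computations.
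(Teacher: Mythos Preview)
Your proof is correct and follows essentially the same route as the paper's: in both directions you transport the data across $\pi$, reduce to subsets of ordinals via a bijection in $M$ (you use $g:\alpha\to B$, the paper uses $f:B\to\rho$, a cosmetic difference), and exploit that for countable $b\in X$ one has $\pi(b)=\pi[b]$ to match up the approximation and guessing conditions. Your explicit emphasis on when $\pi^{{-}1}(\cdot)$ and $\pi^{{-}1}[\cdot]$ coincide is a nice touch that the paper leaves more implicit.
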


\begin{proof}
  First, assume that $X$ is a guessing model. Pick $B\in M$ and $A\in\HH{\vartheta}$ with the property that $A\subseteq B$ and $A\cap x\in M$ for every $x\in M$ that is countable in $M$. Pick a bijection $\map{f}{B}{\rho}$ in $M$ with $\rho\in\On$. We define $$d ~ = ~ (\pi^{{-}1}\circ f)[A] ~ \subseteq ~ \pi^{{-}1}(\rho) ~ \in ~  X\cap\On.$$ Fix $b\in X\cap\Poti{X}{\omega_1}$. Then $f^{{-}1}[\pi(b)\cap\rho]\in M$ is countable in $M$ and this implies that $A\cap f^{{-}1}[\pi(b)\cap\rho]\in M$.  Since elementarity implies that $\pi(b)=\pi[b]$, this yields $$b\cap d ~ =  ~ (\pi^{{-}1}\circ f)[A\cap f^{{-}1}[\pi(b)\cap\rho]]~ \in ~ X.$$ These computations show that $d\subseteq\pi^{{-}1}(\rho)$ is $X$-approximated. 
  Since $X$ is a guessing model, $d$ is $X$-guessed and there is an $e\in X$ with $d=d\cap X=e\cap X$. But then $\pi(e)=\pi[d]=f[A]\in M$ and hence $A$ is an element of $M$. 
  
  For the other direction, assume that the pair $(M,\HH{\vartheta})$ satisfies the $\omega_1$-approxi\-ma\-tion property. Pick $\rho\in X\cap\On$ and  $d\subseteq\rho$ that is $X$-approximated. Set $A=\pi[d\cap X]\subseteq\pi(\rho)\in M$. Fix an $x\in M$ that is countable in $M$. Then $\pi^{{-}1}(x)\in X\cap\Poti{X}{\omega_1}$,   $d\cap\pi^{{-}1}(x)\in X$ and therefore $$A\cap x ~ = ~ \pi[d\cap X]\cap\pi[\pi^{{-}1}(x)] ~ = ~ \pi[d\cap\pi^{{-}1}(x)] ~ = ~ \pi(d\cap\pi^{{-}1}(x))\in M,$$ because $\pi^{{-}1}(x)=\pi^{{-}1}[x]$ and $\pi(d\cap\pi^{{-}1}(x))=\pi[d\cap\pi^{{-}1}(x)]$. By our assumption, it follows that $A\in M$. Since $\pi^{{-}1}(A)\cap X=d\cap X$, we can conclude that $d$ is $X$-guessed.   
\end{proof}

We are now ready to show that the internal AP supercompactness of a cardinal $\kappa$ is equivalent to the statement that $\ISP(\kappa,\lambda)$ holds for all cardinals $\lambda\geq\kappa$.

\begin{proof}[Proof of Lemma \ref{lemma:intscisp}]
  By {\cite[Proposition 3.2 \& 3.3]{MR2838054}}, the following statements are equivalent for every uncountable regular cardinal $\kappa$ and all cardinals $\lambda\geq\kappa$:
\begin{enumerate}[leftmargin=0.7cm]
  \item $\ISP(\kappa,\lambda)$. 
  
  \item If $\vartheta$ is a cardinal with $\betrag{\HH{\vartheta}}=\lambda$, then the set of all guessing models $X\prec\HH{\vartheta}$ with $\betrag{X}<\kappa$ and $X\cap\kappa\in\kappa$ is stationary in $\Poti{\HH{\vartheta}}{\kappa}$. 
  
  \item For sufficiently large cardinals $\vartheta$, there exists a $\lambda$-guessing model $X\prec\HH{\vartheta}$ with $\betrag{X}<\kappa$, $X\cap\kappa\in\kappa$ and $\lambda^+\in X$.
\end{enumerate}

First, assume that $\kappa>\omega_1$ is a regular cardinal with the property that $\ISP(\kappa,\lambda)$ holds for all cardinals $\lambda\geq\kappa$. Fix some regular  cardinal $\theta>\kappa$ and $x\in\HH{\theta}$. Pick some $\vartheta>\betrag{\HH{\theta}}$ and use (ii) to find a guessing model $X\prec\HH{\vartheta}$ of cardinality less than $\kappa$ with $\kappa,\theta,x\in X$ and $X\cap\kappa\in\kappa$. Let $\map{\pi}{X}{N}$ denote the corresponding transitive collapse. Define  $M=\HH{\pi(\theta)}^M$ and $\map{j=\pi^{{-}1}\restriction M}{M}{\HH{\theta}}$. Then $j$ is a small embedding for $\kappa$ with $x\in\ran{j}$ and $N$ is a transitive model of $\ZFC^-$ with $N\subseteq\HH{\theta}$ and $M=\HH{\delta}^N$ for some $N$-cardinal $\delta$. Since Proposition \ref{proposition:collap} shows that the pair $(N,\HH{\theta})$ has the $\omega_1$-approximation property, we can conclude that $\kappa$ is internally AP supercompact.

In the other direction, assume that $\kappa$ is internally AP supercompact. Fix cardinals $\lambda\geq\kappa$ and $\theta>\lambda^+$. Let $\vartheta>\theta$ be a sufficiently large strong limit cardinal such that there is a small embedding $\map{j}{M}{\HH{\vartheta^+}}$ for $\kappa$ and a transitive $\ZFC^-$-model $N$ witnessing the internal AP supercompactness of $\kappa$ with respect to the pair $\langle\lambda^+,\theta\rangle$. Then there is an $N$-cardinal $\varepsilon<\kappa$ with $M=\HH{\varepsilon}^N$. Pick $\delta\in M$ with $j(\delta)=\theta$. In this situation,  elementarity implies that $\delta<\varepsilon$, $\betrag{\HH{\delta}^M}^N<\varepsilon<\kappa$ and $\HH{\delta}^M=\HH{\delta}^N$. Since the pair $(N,\HH{\vartheta})$ satisfies the $\omega_1$-approximation property, this implies that the pair $(\HH{\delta}^M,\HH{\theta})$ satisfies the $\omega_1$-approximation property. If we define $X=j[\HH{\delta}^M]$, then $X\prec\HH{\theta}$, $j^{{-}1}\restriction X$ is the transitive collapse of $X$ and Proposition \ref{proposition:collap} shows that $X$ is a guessing model satisfying $\betrag{X}\leq\betrag{\HH{\delta}^M}^N<\kappa$, $X\cap\kappa=\crit{j}\in\kappa$ and $\lambda^+\in X$. By the above equivalences, this shows that  $\ISP(\kappa,\lambda)$ holds. 
\end{proof}

\begin{corollary}
 The following statements are equivalent for every inaccessible cardinal $\kappa$: 
 \begin{enumerate}
  \item $\kappa$ is supercompact. 
  
  \item $\kappa$ is internally AP supercompact. 
 \end{enumerate}
\end{corollary}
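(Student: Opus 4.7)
I would prove each direction separately, with Lemma~\ref{lemma:intscisp} serving as the bridge to the combinatorial principle $\ISP$.

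For the forward direction (i)$\Rightarrow$(ii), the plan is to start from Magidor's small embedding characterization of supercompactness (Corollary~\ref{corollary:SupercompactMagidorStyle}), which, for every sufficiently large cardinal $\theta$, yields a small embedding $\map{j}{M}{\HH{\theta}}$ for $\kappa$ with the property that $M=\HH{\delta}$ for some cardinal $\delta<\kappa$. First I would observe that the correctness property ``$\dom(j)=\HH{\delta}$ for some cardinal $\delta<\kappa$'' is downwards-absolute, because $\HH{\nu}^M=\HH{\nu}$ for any cardinal $\nu$ with $\crit{j}<\nu<\delta$; consequently, Lemma~\ref{lemma:addxtorange} allows us to additionally arrange $x\in\ran{j}$ for any prescribed $x\in\HH{\theta}$. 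Next I would simply take $N=\HH{\theta}$: since $\theta$ is regular and uncountable, $N$ is a transitive model of $\ZFC^-$ with $N\subseteq\HH{\theta}$; the pair $(N,\HH{\theta})$ trivially satisfies the $\omega_1$-approximation property because the two classes coincide; $\delta<\kappa<\theta$ implies $\delta$ is an $N$-cardinal and $M=\HH{\delta}=\HH{\delta}^N\in N$. This witnesses internal AP supercompactness of $\kappa$ for $(\theta, x)$.

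For the backward direction (ii)$\Rightarrow$(i), the plan is to combine Lemma~\ref{lemma:intscisp} with an external input from Wei{\ss}. First, Lemma~\ref{lemma:intscisp} translates internal AP supercompactness into the assertion that $\ISP(\kappa,\lambda)$ holds for every cardinal $\lambda\geq\kappa$. Then I would invoke Wei{\ss}'s theorem (see \cite[Chapter 4]{weissthesis} and \cite{MR2959668}) that, for $\kappa$ inaccessible, the conjunction of $\ISP(\kappa,\lambda)$ for all $\lambda\geq\kappa$ implies that $\kappa$ is supercompact. Together these give (i).

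The hardest step, at least conceptually, is that the backward direction ultimately leans on Wei{\ss}'s external theorem rather than being derived purely from the embedding machinery of this paper. A direct proof would attempt to upgrade a witness of internal AP supercompactness $(j,N)$ at $\lambda$ to a witness of $\lambda$-supercompactness in the sense of Lemma~\ref{lemma:SupercompactLevelSmallChar}, i.e.\ to produce the normal measure $\{A\in\POT{\POTI{\delta}{\crit{j}}}^M:j[\delta]\in j(A)\}$ inside $M$ itself. The $\omega_1$-approximation property gives such a set in $N$, but pushing it down into $M=\HH{\delta}^N$ using only inaccessibility of $\kappa$ in $V$ is not transparent; this is the combinatorial core that Wei{\ss}'s argument handles, and which I would simply cite. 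The restriction to inaccessible $\kappa$ in the statement is natural, as inaccessibility is precisely what Wei{\ss}'s converse theorem requires, while the forward direction imposes no such hypothesis.
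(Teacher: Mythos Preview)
Your proposal is correct and follows essentially the same route as the paper. For (i)$\Rightarrow$(ii) the paper likewise combines Corollary~\ref{corollary:SupercompactMagidorStyle} with Lemma~\ref{lemma:addxtorange} (your choice $N=\HH{\theta}$ is exactly the intended, trivial witness); for (ii)$\Rightarrow$(i) the paper also passes through Lemma~\ref{lemma:intscisp}, but then spells out the external input a bit more explicitly than you do: inaccessibility of $\kappa$ makes every $\Poti{\lambda}{\kappa}$-list slender (\cite[Proposition~2.2]{MR2959668}), so $\ISP(\kappa,\lambda)$ yields $\lambda$-ineffability for all $\lambda\geq\kappa$, and then Magidor's result from \cite{MR0295904} gives supercompactness---this is the content of the ``Wei{\ss} theorem'' you cite.
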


\begin{proof}
 The implication from (i) to (ii) directly follows from a combination of Corollary \ref{corollary:SupercompactMagidorStyle} and Lemma \ref{lemma:addxtorange}. In the other direction, assume that $\kappa$ is internally AP supercompact. Then Lemma \ref{lemma:intscisp} shows that $\ISP(\kappa,\lambda)$ holds for all $\lambda\geq\kappa$. Since $\kappa$ is inaccessible, every $\Poti{\lambda}{\kappa}$-list is slender (see {\cite[Proposition 2.2]{MR2959668}}) and therefor $\kappa$ is $\lambda$-ineffable for all $\lambda\geq\kappa$. By the results of \cite{MR0295904}, this implies that $\kappa$ is supercompact.  
\end{proof}

\begin{corollary}\label{corollary:PFAinternalSupercompact}
 $\PFA$ implies that $\omega_2$ is internally AP supercompact.
\end{corollary}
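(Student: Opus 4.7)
The plan is to reduce the statement to the already-established characterization in Lemma \ref{lemma:intscisp} and then invoke the known consequences of $\PFA$ from Viale and Wei\ss's work. Concretely, Lemma \ref{lemma:intscisp} asserts that a regular cardinal $\kappa > \omega_1$ is internally AP supercompact if and only if $\ISP(\kappa,\lambda)$ holds for every cardinal $\lambda \geq \kappa$. Since $\omega_2$ is regular and $\omega_2 > \omega_1$, it therefore suffices to verify $\ISP(\omega_2,\lambda)$ for all cardinals $\lambda \geq \omega_2$ under $\PFA$.

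For the second step, I would cite the main result of \cite{MR2838054}, where Viale and Wei\ss\ establish that $\PFA$ implies $\ISP(\omega_2,\lambda)$ for every cardinal $\lambda \geq \omega_2$. This is precisely the content of their generalized tree property, which they prove using a standard $\PFA$ argument over the class of proper forcings producing suitable guessing models. Combining this with the equivalence from Lemma \ref{lemma:intscisp} immediately yields the desired conclusion: under $\PFA$, $\omega_2$ is internally AP supercompact.

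I do not expect any genuine obstacle here, because the bulk of the work has been done in the preceding Lemma \ref{lemma:intscisp} (which translates internal AP supercompactness into the tree property $\ISP$ via the correspondence between guessing models and the $\omega_1$-approximation property, as in Proposition \ref{proposition:collap}), and the remaining ingredient is a direct quotation from \cite{MR2838054}. The only minor point worth checking is the hypothesis $\kappa > \omega_1$ in Lemma \ref{lemma:intscisp}, which is automatically satisfied for $\kappa = \omega_2$. Thus the proof is essentially a two-line deduction, and should be presented as such.
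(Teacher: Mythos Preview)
Your proposal is correct and follows exactly the same route as the paper's proof: cite \cite[Theorem 4.8]{MR2838054} for the fact that $\PFA$ implies $\ISP(\omega_2,\lambda)$ for all $\lambda\geq\omega_2$, and combine this with Lemma \ref{lemma:intscisp}.
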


\begin{proof}
  By {\cite[Theorem 4.8]{MR2838054}}, $\PFA$ implies that $\ISP(\omega_2,\lambda)$ holds for all cardinals $\lambda\ge\omega_2$. In combination with Lemma \ref{lemma:intscisp}, this yields the statement of the corollary. 
\end{proof}

The results contained in {\cite[Section 5]{MR2959668}} and Section \ref{section:application} of this paper provide alternative consistency proofs for the internal supercompactness of $\omega_2$. In addition, it is also possible to establish this property by using Itay Neeman's \emph{pure side condition forcing} (see \cite[Definition 2.4]{MR3201836}) to turn a supercompact cardinal into $\omega_2$. This argument will be presented in detail in the upcoming \cite{hln}.  


\section{Internally Subtle and Ineffable Cardinals}\label{section:intsubtleineffable}

This section contains the formulation of internal large cardinal concepts for subtlety and $\lambda$-ineffability. The following definition provides such a principle based on the small embedding characterization of subtlety in Lemma \ref{lemma:SubtleSmallChar} and the $\omega_1$-approximation property.

\begin{definition}\label{internallysubtle}
  A cardinal $\kappa$ is \emph{internally AP subtle} if for all sufficiently large regular cardinals $\theta$, all $x\in\HH{\theta}$, every club $C$ in $\kappa$, and every $\kappa$-list $\vec{d}=\seq{d_\alpha}{\alpha<\kappa}$, there is a small embedding $\map{j}{M}{\HH{\theta}}$ for $\kappa$ and a transitive model $N$ of $\ZFC^-$ such that $x,\vec{d},C\in\ran j$ and the following statements hold: 
  \begin{enumerate}[leftmargin=0.7cm]
   \item $N\subseteq\HH{\theta}$ and the pair $(N,\HH{\theta})$ satisfies the $\omega_1$-approximation property. 
   
   \item $M\in N$ and $\Poti{\crit j}{\omega_1}^N\subseteq M$. 
   
   \item If $d_{\crit{j}}\in N$, then there is an $\alpha<\crit{j}$ with $\alpha\in C$ and $d_\alpha=d_{\crit{j}}\cap\alpha$.
  \end{enumerate}
\end{definition}

In the following, we will show that the above principle implies a generalized tree property corresponding to subtlety that was formulated and studied by Christoph Wei{\ss} in his \cite{weissthesis}.

\begin{definition}
 Let $\kappa$ be an uncountable regular cardinal. 
  \begin{enumerate}[leftmargin=0.7cm]
   \item  A $\kappa$-list $\seq{d_\alpha}{\alpha<\kappa}$ is \emph{slender} if there is a club $C$ in $\kappa$ with the property that for every $\gamma\in C$ and every $\alpha<\gamma$, there is a $\beta<\gamma$ with $d_\gamma\cap\alpha=d_\beta\cap\alpha$. 
   
   \item $\SSP(\kappa)$ is the statement that for every slender $\kappa$-list $\seq{d_\alpha}{\alpha<\kappa}$ and every club $C$ in $\kappa$, there are $\alpha,\beta\in C$ such that $\alpha<\beta$ and $d_\alpha=d_\beta\cap\alpha$.  
 \end{enumerate}  
\end{definition}

\begin{lemma}\label{lemma:InternalAPSubtleImpliesSSP}
 If $\kappa$ is an internally AP subtle cardinal, then $\SSP(\kappa)$ holds. 
\end{lemma}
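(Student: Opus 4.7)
The plan is to feed the internal AP subtleness of $\kappa$ a carefully chosen quadruple $(\theta, x, C, \vec d)$ and then use slenderness together with the $\omega_1$-approximation property to verify that the hypothesis of clause (iii) of Definition \ref{internallysubtle} is met.

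Fix a slender $\kappa$-list $\vec d = \seq{d_\alpha}{\alpha<\kappa}$ together with its slenderness-club $C'$, and let $C$ be any club in $\kappa$. I would apply the internal AP subtleness of $\kappa$ to sufficiently large $\theta$ with $x = C'$, the club $C$, and the list $\vec d$, obtaining $\map{j}{M}{\HH{\theta}}$ and a transitive $\ZFC^-$-model $N$ as in Definition \ref{internallysubtle}, with $C', C, \vec d\in\ran j$. By elementarity, $\crit j$ is a limit point of each of the clubs $C, C'\in\ran j$, so $\crit j\in C\cap C'$. Note also that for every $\beta<\crit j$, writing $\vec d = j(\vec e)$ with $\vec e\in M$, the entry $e_\beta$ lies in $\HH{\crit j}\cap M$ and is pointwise fixed by $j$, so $d_\beta = j(e_\beta) = e_\beta\in M$.

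The core step is to show that $d_{\crit j}\in N$; once this is in hand, clause (iii) of Definition \ref{internallysubtle} yields an $\alpha\in C\cap\crit j$ with $d_\alpha = d_{\crit j}\cap\alpha$, and taking $\beta := \crit j\in C$ finishes $\SSP(\kappa)$. To prove $d_{\crit j}\in N$, I apply the $\omega_1$-approximation property of the pair $(N, \HH{\theta})$ to $A := d_{\crit j}\subseteq\crit j\in M\subseteq N$. So I need to verify that $d_{\crit j}\cap x\in N$ for every $x\in N$ that is countable in $N$. Replacing $x$ by $x\cap\crit j$, which still lies in $N$ and is countable there, clause (ii) of Definition \ref{internallysubtle} gives $x\cap\crit j\in\Poti{\crit j}{\omega_1}^N\subseteq M$.

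The main obstacle is handling the case that $x\cap\crit j$ could be unbounded in $\crit j$. The key observation is that this case is vacuous: if $\cof^N(\crit j) = \omega$, then a cofinal $\omega$-sequence in $\crit j$ would live in $\Poti{\crit j}{\omega_1}^N\subseteq M$, so $M$ would contain a cofinal function from $\omega$ into $\crit j$; but then $j$ would pointwise fix such a function, contradicting the regularity of $\kappa = j(\crit j)$ exactly as in the proof of Lemma \ref{lemma:CharStatLimit}. Hence $\crit j$ has uncountable cofinality in $N$, so $x\cap\crit j$ is bounded by some $\alpha<\crit j$, and slenderness at $\gamma = \crit j\in C'$ supplies a $\beta<\crit j$ with $d_{\crit j}\cap\alpha = d_\beta\cap\alpha$. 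Since $d_\beta, x\cap\crit j\in M$ as noted above, we obtain
\[
d_{\crit j}\cap x \;=\; d_\beta\cap(x\cap\crit j)\;\in\; M\;\subseteq\; N,
\]
completing the verification and thereby the proof.
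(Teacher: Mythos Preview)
Your proof is correct and follows the same approach as the paper's: show $d_{\crit j}\in N$ via the $\omega_1$-approximation property together with slenderness, then invoke clause (iii). One minor imprecision: a cofinal $\omega$-sequence is a function, not literally an element of $\Poti{\crit j}{\omega_1}^N$, but the range of an increasing such sequence is, and from that range (which has order type $\omega$, an absolute fact) $M$ recovers a cofinal map $\omega\to\crit j$; with this small tweak your cofinality argument is actually more explicit than the paper's, which simply asserts boundedness of the countable $x$ from the regularity of $\crit j$ in $M$ without isolating this step.
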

 
 \begin{proof}
  Fix a slender $\kappa$-list $\vec{d}=\seq{d_\alpha}{\alpha<\kappa}$   and a club $C_0$ in $\kappa$. Let $C\subseteq C_0$ be a club witnessing the slenderness of $\vec{d}$ and let $\theta$ be a sufficiently large regular cardinal such that there is a small embedding $\map{j}{M}{\HH{\theta}}$ and a transitive $\ZFC^-$-model $N$ witnessing the internal AP subtlety with respect to $\vec{d}$ and $C$. Then elementarity implies that $\crit{j}\in C\subseteq C_0$.  
  
  Assume for a contradiction that $d_\crit{j}\notin N$. Then the $\omega_1$-approximation property yields an $x\in\Poti{\crit{j}}{\omega_1}^N$ with $d_\crit{j}\cap x\notin N$. Then $x\in M$ and, since $\crit{j}$ is a regular cardinal in $M$, there is an $\alpha<\crit{j}\in C$ with $x\subseteq \alpha$. In this situation, the slenderness of $\vec{d}$ yields a $\beta<\crit{j}$ with $d_{\crit{j}}\cap\alpha=d_\beta\cap\alpha$. But then we have $$d_\crit{j} \cap x ~ = Ê~ d_\crit{j} \cap x\cap\alpha ~ = ~ d_\beta\cap x\cap\alpha.$$ Since $\vec{d}\in\ran{j}$, we have $d_\beta\in M\subseteq N$ and hence $d_\crit{j} \cap x\in N$, a contradiction. 
  
  The above computations show that $d_\crit{j}\in N$ and therefore our assumptions yield an $\alpha<\crit{j}$ with $\alpha\in C\subseteq C_0$ and $d_\alpha=d_{\crit{j}}\cap\alpha$. 
 \end{proof}

\begin{corollary}
 If $\kappa$ is an internally AP subtle cardinal, then $\kappa$ is a subtle cardinal in $\LL$. 
\end{corollary}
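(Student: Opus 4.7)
My plan is to run a variant of the proof of Lemma \ref{lemma:InternalAPSubtleImpliesSSP}, letting the constructibility of $\vec{d}$ play the role that slenderness played there, and then invoking absoluteness to pass the conclusion from $V$ to $L$. By absoluteness of ordinals and of the statement "$d_\alpha = d_\beta \cap \alpha$", it suffices to prove: for every $\kappa$-list $\vec{d} = \seq{d_\alpha}{\alpha<\kappa} \in L$ and every club $C \subseteq \kappa$ with $C \in L$, there are $\alpha < \beta$ in $C$ with $d_\alpha = d_\beta \cap \alpha$ (the same pair then witnesses subtleness in $L$).

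Given such $\vec{d}$ and $C$, I will apply internal AP subtleness of $\kappa$ in $V$ at a sufficiently large regular cardinal $\theta$, with $\vec{d}, C \in \ran j$ (using Lemma \ref{lemma:addxtorange}), to obtain a small embedding $\map{j}{M}{\HH{\theta}}$ and a transitive $\ZFC^-$-model $N$ satisfying clauses (i)--(iii) of Definition \ref{internallysubtle}. Since $C \in \ran j$, elementarity gives $\crit j \in C$, so setting $\beta = \crit j$ reduces the problem to producing $\alpha \in C \cap \crit j$ with $d_\alpha = d_{\crit j} \cap \alpha$. By condition (iii), this will follow once we have $d_{\crit j} \in N$.

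The main step is verifying $d_{\crit j} \in N$ via the $\omega_1$-approximation property of $(N,\HH{\theta})$. Given $x \in N$ that is countable in $N$, I may replace $x$ by $x \cap \crit j$, so assume $x \subseteq \crit j$; then $x \in \Poti{\crit j}{\omega_1}^N \subseteq M$ by (ii), and since $\crit j$ is regular in $M$, there is $\alpha < \crit j$ with $x \subseteq \alpha$. Now the constructibility of $\vec{d}$ enters: $d_{\crit j} \cap \alpha$ is a subset of $\alpha$ lying in $L$, hence in $L_{(\alpha^+)^L}$; since $\crit j$ is a cardinal in $V$, a bijection below $\crit j$ in $L$ would live in $V$, so $\crit j$ is also a cardinal in $L$, whence $(\alpha^+)^L \leq \crit j$ and thus $d_{\crit j} \cap \alpha \in L_{\crit j}$. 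As $\crit j \in N$ and the $L$-construction is absolute for transitive $\ZFC^-$-models, $L_{\crit j} \subseteq L^N \subseteq N$, so $d_{\crit j} \cap \alpha \in N$, and consequently $d_{\crit j} \cap x = (d_{\crit j} \cap \alpha) \cap x \in N$. Thus the $\omega_1$-approximation property yields $d_{\crit j} \in N$.

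The hardest part will be the bookkeeping in this last step, in particular confirming that the boundedness of $x$ in $\crit j$ from $M$'s perspective suffices (paralleling Lemma \ref{lemma:InternalAPSubtleImpliesSSP}, where the analogous point is used without further comment), and verifying that $\crit j$ is indeed a cardinal in $V$ (and hence in $L$), which should follow from the elementarity of $j$ combined with the fact that $\kappa$ is regular uncountable. Once $d_{\crit j} \in N$ is established, condition (iii) directly gives the required $\alpha$, completing the proof.
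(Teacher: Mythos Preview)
Your approach is genuinely different from the paper's: the paper simply combines Lemma~\ref{lemma:InternalAPSubtleImpliesSSP} (internal AP subtlety implies $\SSP(\kappa)$) with Wei{\ss}'s {\cite[Theorem 2.4.1]{weissthesis}} ($\SSP(\kappa)$ implies $\kappa$ is subtle in $\LL$). You instead try to run the approximation argument directly against a constructible list, bypassing $\SSP$.

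There is, however, a real gap. The step ``$\crit j$ is a cardinal in $\VV$, hence in $\LL$'' is not justified. Elementarity of $j$ only tells you that $\crit j$ is regular \emph{in $M$}; nothing in Definition~\ref{internallysubtle} forces $\crit j$ to be a cardinal in $\VV$ or in $\LL$. (Indeed, $M$ can be countable, with $\crit j$ a countable ordinal.) Without this, you cannot conclude $(\alpha^+)^\LL \le \crit j$, so you cannot place $d_{\crit j}\cap\alpha$ inside $\LL_{\crit j}$, and the approximation argument stalls.

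This is not a cosmetic issue. Consider the possibility that $\kappa$ is a successor cardinal in $\LL$, say $\kappa=(\mu^+)^\LL$. Then the $\LL$-cardinals below $\kappa$ are bounded by $\mu$, so $\crit j$ will typically fail to be an $\LL$-cardinal, and for $\alpha\in(\mu,\crit j)$ one has $(\alpha^+)^\LL=\kappa>\crit j$. Your argument does nothing to exclude this scenario; yet excluding it is exactly what is needed, since subtle cardinals are inaccessible in $\LL$. Unlike the slender case in Lemma~\ref{lemma:InternalAPSubtleImpliesSSP}, where $d_{\crit j}\cap\alpha$ is recovered from some $d_\beta$ with $\beta<\crit j$ that already lies in $\ran j$, mere constructibility of $\vec d$ gives no such local witness below $\crit j$. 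Closing this gap essentially amounts to reproving Wei{\ss}'s result, which is why the paper simply cites it after reducing to $\SSP(\kappa)$.
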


\begin{proof}
 This statement follows directly from {\cite[Theorem 2.4.1]{weissthesis}} and the above lemma. 
\end{proof}

The results of the next section will show that a subtle cardinal is also the upper bound for the consistency strength of the internal AP subtlety of $\omega_2$.

Next, we consider internal versions of $\lambda$-ineffability provided by  Lemma \ref{lemma:LambdaIneffableSmallChar}, and their corresponding generalized tree properties.

\begin{definition}\label{internallylambdaineffable} 
   Given cardinals $\lambda\ge\kappa$, the cardinal $\kappa$ is \emph{internally AP $\lambda$-ineffable} if for all sufficiently large regular cardinals $\theta$, all $x\in\HH{\theta}$, and every $\Poti{\lambda}{\kappa}$-list $\vec{d}=\seq{d_a}{a\in\Poti{\lambda}{\kappa}}$, there is a small embedding $\map{j}{M}{\HH{\theta}}$ for $\kappa$, an ordinal $\delta\in M$ and a transitive model $N$ of $\ZFC^-$ such that $j(\delta)=\lambda$, $x,\vec{d}\in\ran j$, $\delta<\kappa$ and the following statements hold: 
 \begin{enumerate}[leftmargin=0.7cm]
   \item $N\subseteq\HH{\theta}$ and the pair $(N,\HH{\theta})$ satisfies the $\omega_1$-approximation property. 
   
   \item $M\in N$ and $\Poti{\delta}{\omega_1}^N\subseteq M$. 
   
   \item If $j^{{-}1}[d_{j[\delta]}]\in N$, then $j^{{-}1}[d_{j[\delta]}]\in M$. 
  \end{enumerate} 
\end{definition}

\begin{lemma}\label{lemma:InternalLambdaIneffableImpliesISP}
 If $\kappa$ is an internally AP $\lambda$-ineffable cardinal, then $\ISP(\kappa,\lambda)$ holds. 
\end{lemma}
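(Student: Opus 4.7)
The plan is to adapt the strategy of Lemma \ref{lemma:InternalAPSubtleImpliesSSP} to the $\lambda$-ineffable setting, using slenderness together with properties (i)--(iii) of Definition \ref{internallylambdaineffable} to force $d:=j^{-1}[d_{j[\delta]}]$ into $M$; once this is achieved, the stationarity argument of Lemma \ref{lemma:LambdaIneffableSmallChar} transfers essentially verbatim.

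Fix a slender $\Poti{\lambda}{\kappa}$-list $\vec d$, and choose a regular $\theta$ large enough that slenderness of $\vec d$ is witnessed by a club $C\subseteq\Poti{\HH{\theta}}{\kappa}$ and that internal AP $\lambda$-ineffability applies at $\theta$. Invoking the latter with $C$ among the parameters yields a small embedding $\map{j}{M}{\HH{\theta}}$ with $|M|<\kappa$, an ordinal $\delta\in M\cap\kappa$ with $j(\delta)=\lambda$, and a transitive $\ZFC^-$-model $N$ as in the definition. First I would verify $\ran j\in C$: since $C\in\ran j$, elementarity yields a function $f\in\ran j$ with $\clo{f}\subseteq C$, and $\ran j\prec\HH{\theta}$ is closed under every function it contains. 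As $j$ is strictly increasing with $j(\delta)=\lambda$, we also have $\ran j\cap\lambda=j[\delta]$, so the slenderness condition at $X=\ran j$ becomes: for every $b\in\ran j\cap\Poti{\lambda}{\omega_1}$, $b\cap d_{j[\delta]}\in\ran j$.

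The main step is showing $d\in N$ via the $\omega_1$-approximation property. It suffices to check $d\cap x\in N$ for every $x\in N$ that is countable in $N$; after intersecting with $\delta\in M\subseteq N$, we may assume $x\subseteq\delta$, and then property (ii) places $x$ in $M$. Since $x$ is countable in $M$ and $\crit j>\omega$, $j[x]=j(x)\in\ran j\cap\Poti{\lambda}{\omega_1}$, so slenderness gives $j[x]\cap d_{j[\delta]}\in\ran j$. The injectivity of $j$ on $\delta$ identifies $j[x]\cap d_{j[\delta]}=j[x\cap d]$, and pulling back through $j$ yields $x\cap d\in M\subseteq N$. Property (iii) then upgrades $d\in N$ to $d\in M$.

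Setting $D:=j(d)\subseteq\lambda$, we have $D\cap j[\delta]=j[d]=d_{j[\delta]}$, so $j[\delta]$ lies in $S:=\Set{a\in\Poti{\lambda}{\kappa}}{d_a=D\cap a}$, and $S\in\ran j$. Stationarity of $S$ then follows by the contradiction argument of Lemma \ref{lemma:LambdaIneffableSmallChar}: if $\map{f}{\Poti{\lambda}{\omega}}{\Poti{\lambda}{\kappa}}$ satisfied $\clo{f}\cap S=\emptyset$, elementarity would yield $f_0\in M$ with $j(f_0)=f$, and for every finite $b=j(b')\subseteq j[\delta]$ we would have $f(b)=j(f_0(b'))=j[f_0(b')]\subseteq j[\delta]$, placing $j[\delta]\in\clo{f}\cap S$, a contradiction. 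The principal obstacle is Step 1 --- arranging the slenderness condition to apply precisely at $X=\ran j$, which hinges on the smallness $|M|<\kappa$ together with the identification $\ran j\cap\lambda=j[\delta]$; the remainder is a careful but routine application of $\omega_1$-approximation and elementarity.
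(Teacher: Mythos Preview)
Your overall strategy matches the paper's, but there is a genuine gap in how you set up the embedding. You fix a single regular $\theta$ serving simultaneously as the level at which the slenderness club $C\subseteq\Poti{\HH{\theta}}{\kappa}$ lives and as the codomain of the small embedding, and you then ask for $C\in\ran j$. This is impossible: any such club $C$, and likewise any function $f$ with $\clo f\subseteq C$, has domain or underlying set of size at least $|\HH{\theta}|$ (for instance $f$ has domain $\Poti{\HH{\theta}}{\omega}$), so neither $C$ nor $f$ is an element of $\HH{\theta}$. Hence neither can lie in $\ran j\subseteq\HH{\theta}$, and the step ``since $C\in\ran j$, elementarity yields $f\in\ran j$ with $\clo f\subseteq C$'' cannot be carried out. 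A related issue is your assertion that $|M|<\kappa$: Definition~\ref{internallylambdaineffable} only provides $\delta<\kappa$, not a cardinality bound on $M$, and this bound is exactly what you need to place $\ran j$ inside $\Poti{\HH{\theta}}{\kappa}$.

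The paper avoids this by separating the two roles of $\theta$. One first fixes $\theta$ large enough for slenderness and a function $f$ with $\clo f$ witnessing it, and only then applies internal AP $\lambda$-ineffability at a strictly larger regular $\vartheta$, placing $f$ among the parameters. The small embedding then targets $\HH{\vartheta}$, and the role of your ``$\ran j$'' is played instead by $X=j[\HH{\varepsilon}^M]\prec\HH{\theta}$, where $j(\varepsilon)=\theta$; one has $X\cap\lambda=j[\delta]$ exactly as you computed. After this adjustment the remainder of your argument---the $\omega_1$-approximation computation showing $j^{-1}[d_{j[\delta]}]\in N$, and the stationarity of $S$ via the contradiction with a hypothetical $f_0$---goes through essentially verbatim and coincides with the paper's proof.
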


\begin{proof}
   Fix a slender $\Poti{\lambda}{\kappa}$-list $\vec{d}=\seq{d_a}{a\in\Poti{\lambda}{\kappa}}$ and a sufficiently large cardinal $\theta$ such that there is a function $\map{f}{\Poti{\HH{\theta}}{\omega}}{\Poti{\HH{\theta}}{\kappa}}$ with the property that $\clo{f}$ is a club in $\Poti{\HH{\theta}}{\kappa}$ witnessing the slenderness of $\vec{d}$. Let $\vartheta$ be a sufficiently large regular cardinal such that there is a small embedding $\map{j}{M}{\HH{\vartheta}}$ with $f\in\ran{j}$, $\delta\in M$ and a transitive $\ZFC^-$-model $N$ witnessing the internal AP $\lambda$-ineffability of $\kappa$ with respect to $\vec{d}$. Pick $\varepsilon\in M$ with $j(\varepsilon)=\theta$. As in the proof of Lemma \ref{lemma:LambdaIneffableSmallChar}, we then have $X=j[\HH{\varepsilon}^M]\in\clo{f}$. 
 
 Assume for a contradiction that $j^{{-}1}[d_{j[\delta]}]\notin N$. Then the $\omega_1$-approximation property yields an $x\in\Poti{\delta}{\omega_1}^N$ with $x\cap j^{{-}1}[d_{j[\delta]}]\notin N$. Then our assumptions imply that $x$ is an element of $\Poti{\delta}{\omega_1}^M$. But then $j(x)\in X\cap\Poti{\lambda}{\omega_1}$ and the slenderness of $\vec{d}$ implies that $j(x)\cap d_{j[\delta]}\in X$. Then we can conclude that $$x\cap j^{{-}1}[d_{j[\delta]}] ~ = ~ j^{{-1}}[j(x)\cap d_{j[\delta]}] ~ = ~ j^{{-}1}(j(x)\cap d_{j[\delta]}) ~ \in ~  M ~ \subseteq ~ N,$$ a contradiction. 
 
 The above computations show that $j^{{-}1}[d_{j[\delta]}]\in N$ and hence our assumptions imply that this set is also an element of $M$. Let $D=j(j^{{-}1}[d_{j[\delta]}])$ and assume for a contradiction that the set $S=\Set{a\in\Poti{\lambda}{\kappa}}{d_a=D\cap a}$ is not stationary in $\Poti{\lambda}{\kappa}$. By elementarity, there is a function $\map{f_0}{\Poti{\delta}{\omega}}{\Poti{\delta}{\crit{j}}}$ in $M$ such that  $\clo{j(f_0)}\cap S=\emptyset$. But then $j[\delta]\in\clo{j(f_0)}\cap S$, a contradiction.  
\end{proof}

\begin{corollary}
  If $\kappa$ is an internally AP $\kappa$-ineffable cardinal, then $\kappa$ is an ineffable cardinal in $\LL$. 
\end{corollary}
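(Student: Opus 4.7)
The proof plan follows the template of the immediately preceding corollary on internal AP subtlety. First, applying Lemma \ref{lemma:InternalLambdaIneffableImpliesISP} with $\lambda = \kappa$ gives that $\ISP(\kappa,\kappa)$ holds in $V$. So it suffices to establish the ineffable analogue of \cite[Theorem 2.4.1]{weissthesis}: namely, that $\ISP(\kappa,\kappa)$ implies that $\kappa$ is ineffable in $\LL$.

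To argue this analogue, observe first that internal AP $\kappa$-ineffability already forces $\kappa$ to be regular and uncountable in $V$ (this can be read off from the witnessing small embeddings, exactly as in statement (i) of Corollary \ref{corollary:SmallCardsSmallChar}), hence in $\LL$; combined with the GCH in $\LL$, this yields that $\kappa$ is inaccessible in $\LL$. Therefore, by \cite[Proposition 2.2]{MR2959668} applied inside $\LL$, every $\kappa$-list in $\LL$ is slender there. Fix such a list $\vec{d}\in\LL$. The key step is to transfer slenderness from $\LL$ to $V$: given an $\LL$-cardinal $\rho>\kappa$ with $\vec{d}\in\LL_\rho$, consider elementary submodels $X\prec\HH{\theta}^V$ with $\{\vec{d},\LL_\rho\}\subseteq X$ and $X\cap\kappa\in\kappa$. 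Condensation of $\LL$ implies that the transitive collapse of $X\cap\LL_\rho$ is some initial segment $\LL_{\bar\rho}$, which allows one to compute $b\cap d_{X\cap\kappa}$ from parameters in $X$ whenever $b\in X\cap\Poti{\kappa}{\omega_1}$, thereby witnessing slenderness of $\vec{d}$ in $V$.

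Once this transfer is accomplished, $\ISP(\kappa,\kappa)$ applied in $V$ produces $D\subseteq\kappa$ with $\Set{\alpha<\kappa}{d_\alpha=D\cap\alpha}$ stationary in $V$. A further condensation argument, applied to an elementary submodel containing $D$, places $D$ itself in $\LL$, and since stationarity in $V$ implies stationarity in $\LL$, this establishes the ineffability of $\kappa$ in $\LL$. The main obstacle is precisely the passage between $V$-slenderness and $\LL$-slenderness, since the definitions involve clubs in $\Poti{\HH{\theta}}{\kappa}$ which are not absolute between the two models; this is overcome via the condensation properties of $\LL$, and this is also the step for which we would need to consult \cite{weissthesis} for the detailed fine-structural bookkeeping.
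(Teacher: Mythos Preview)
Your approach matches the paper's exactly: both proofs combine the preceding lemma (which yields $\ISP(\kappa,\kappa)$) with the appropriate result from \cite{weissthesis}; the paper simply cites {\cite[Theorem 2.4.3]{weissthesis}} and stops, while you go further and sketch its proof.

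In that sketch, however, you seem to have the difficulties inverted. The slenderness transfer is actually rather direct: if $\vec d\in\LL$ and $\kappa$ is inaccessible in $\LL$, then there is a club $E\in\LL$ witnessing slenderness of $\vec d$ in the $\kappa$-list sense, and for $X\prec\HH{\theta}$ with $\vec d,E\in X$ and $\alpha=X\cap\kappa\in E$, any $b\in X\cap\Poti{\kappa}{\omega_1}$ is bounded in $\alpha$, so $b\cap d_\alpha=b\cap d_{\beta'}$ for some $\beta'<\alpha\subseteq X$, whence $b\cap d_\alpha\in X$; no fine-structural bookkeeping is needed here.

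By contrast, the step you describe as a routine ``further condensation argument'' placing $D$ in $\LL$ is the genuinely delicate point. If you collapse an $X\prec\HH{\theta}$ with $D\in X$ and $X\cap\kappa=\alpha\in S$, then $\pi(D)=d_\alpha$; but to conclude $M\models\pi(D)\in\LL$ you need $d_\alpha\in\LL_{\mathrm{Ord}(M)}$, and nothing prevents $d_\alpha$ from appearing only high in the $\LL$-hierarchy, above the ordinals of $M$. In general a subset of $\kappa$ all of whose proper initial segments lie in $\LL$ need not itself lie in $\LL$. The actual argument in \cite{weissthesis} handles this with additional input, so it is precisely this step, not the slenderness transfer, for which one should consult the reference.
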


\begin{proof}
 This statement follows directly from {\cite[Theorem 2.4.3]{weissthesis}} and the above lemma. 
\end{proof}

The results of the next section will also show that the consistency of the internal AP ineffability of $\omega_2$ can be established from an ineffable cardinal.


\section{On a theorem by Christoph Wei\ss}\label{section:application}

The principles $\SSP(\kappa)$ and $\ISP(\kappa,\lambda)$ mentioned above were first formulated and studied in detail by Christoph Wei{\ss} in \cite{weissthesis} and \cite{MR2959668}. The following theorem summarizes the upper bounds for the consistency strength of these statements presented there. Remember that, given transitive classes $M\subseteq N$, the pair $(M,N)$ satisfies the \emph{$\omega_1$-covering property} if whenever $A\in N$ is countable in $N$ and $A\subseteq M$, then there is a $B\in M$ which is countable in $M$ and satisfies $A\subseteq B$.

\begin{theorem}[{\cite[Theorem 2.3.1]{weissthesis}} \& {\cite[Theorem 5.4]{MR2959668}}]\label{theorem:WeissConsResults}
  Let $\tau<\kappa\leq\lambda$ be cardinals with $\tau$ uncountable and regular, and let $\vec{\PPP}=\langle\seq{\vec{\PPP}_{{<}\alpha}}{\alpha\leq\kappa},\seq{\dot{\PPP}_\alpha}{\alpha<\kappa}\rangle$ be a forcing iteration such that the following statements hold for all inaccessible cardinals $\eta\le\kappa$:  
\begin{enumerate}
  \item $\vec{\PPP}_{{<}\eta}\subseteq\HH{\eta}$\footnote{Following \cite{MR2959668}, we make use of the convention that conditions in forcing iterations are only defined on their support.} is the direct limit of $\langle\seq{\PPP_{{<}\alpha}}{\alpha<\eta},\seq{\dot{\PPP}_\alpha}{\alpha<\eta}\rangle$ and satisfies the $\eta$-chain condition. 
  
  \item If $G$ is $\vec{\PPP}_{{<}\kappa}$-generic over $\VV$ and $G_\eta$ is the filter on $\vec{\PPP}_{{<}\eta}$ induced by $G$, then the pair $(\VV[G_\eta],\VV[G])$ satisfies the $\omega_1$-approximation property. 
  
  \item If $\alpha<\eta$, then $\PPP_{{<}\alpha}$ is definable in $\HH{\eta}$ from the parameters $\tau$ and $\alpha$. 
\end{enumerate}
 Then the following statements hold:
 \begin{enumerate}[leftmargin=0.7cm]
  \item[(1)] If $\kappa$ is a subtle cardinal, then $\mathbbm{1}_{\vec{\PPP}_{{<}\kappa}}\Vdash\SSP(\check{\kappa})$. 
  
  \item[(2)] If $\kappa$ is an ineffable cardinal, then $\mathbbm{1}_{\vec{\PPP}_{{<}\kappa}}\Vdash\ISP(\check{\kappa},\check{\kappa})$. 
  
  \item[(3)] Assume that $\vec{\PPP}$ also satisfies the following statement for all inaccessible cardinals $\eta\le\kappa$:  
   \begin{enumerate}
    \item[(iv)] If $G_\eta$ is $\vec{\PPP}_{{<}\eta}$-generic over $\VV$, then the pair $(\VV,\VV[G_\eta])$ satisfies the $\omega_1$-covering property. 
   \end{enumerate}
  Then, if $\kappa$ is $\lambda^{{<}\kappa}$-ineffable  for some cardinal $\lambda\geq\kappa$, then $\mathbbm{1}_{\vec{\PPP}_{{<}\kappa}}\Vdash\ISP(\check{\kappa},\check{\lambda})$. 
 \end{enumerate}
\end{theorem}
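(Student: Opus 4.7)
The plan is to reduce each part to establishing the appropriate internal AP large cardinal property in $V[G]$, thereby bringing the small embedding machinery of Sections~\ref{section:internal} and~\ref{section:intsubtleineffable} to bear. Specifically, for (1) I aim to show $\kappa$ is internally AP subtle in $V[G]$, for (2) internally AP $\kappa$-ineffable, and for (3) internally AP $\lambda$-ineffable; Lemma~\ref{lemma:InternalAPSubtleImpliesSSP} and Lemma~\ref{lemma:InternalLambdaIneffableImpliesISP} then deliver $\SSP(\kappa)$ and the two instances of $\ISP$.

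The engine is a lifting construction. Given a list and (in part (1)) a club in $V[G]$, by the $\kappa$-cc hypothesis (i) these admit nice $\PPP_{{<}\kappa}$-names $\dot{\vec d}, \dot C$ in $V$. Applying the relevant small embedding characterization inside $V$, that is Lemma~\ref{lemma:SubtleSmallChar} for (1), Lemma~\ref{lemma:IneffableSmallChar} for (2), and Lemma~\ref{lemma:LambdaIneffableSmallChar} for (3), combined with Lemma~\ref{Lemma:CritIndes} to force $\bar\kappa = \crit{j}$ to be inaccessible, produces $\map{j}{M}{\HH{\theta}^V}$ in $V$ with these names in $\ran{j}$ and the appropriate combinatorial property at $\bar\kappa$. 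By Lemma~\ref{lemma:ModelsContainInitialSegment}, $\HH{\bar\kappa}^V \subseteq M$, so $\PPP_{{<}\bar\kappa} \in M$; hypothesis (iii) applied inside both $M$ and $V$ yields $j(\PPP_{{<}\bar\kappa}) = \PPP_{{<}\kappa}$, and (i) ensures that the induced filter $G_{\bar\kappa}$ is $\PPP_{{<}\bar\kappa}^M$-generic over $M$. The standard lifting lemma then produces $\map{j^*}{M[G_{\bar\kappa}]}{\HH{\theta}^{V[G]}}$ via $j^*(\dot x^{G_{\bar\kappa}}) = j(\dot x)^G$. Setting $N = \HH{\theta}^{V[G_{\bar\kappa}]}$, hypothesis (ii) directly delivers the $\omega_1$-approximation property of $(N, \HH{\theta}^{V[G]})$.

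Verifying the combinatorial clauses of Definitions~\ref{internallysubtle} and~\ref{internallylambdaineffable} amounts to pulling the $V$-side witness across $j^*$. For (1), assuming $d_{\bar\kappa} \in N$, one uses $\bar\kappa$-cc names inside $M[G_{\bar\kappa}]$ to extract a $V$-side combinatorial shadow of $d_{\bar\kappa}$ to which the ground-model subtlety witness applies, producing the required $\alpha \in C \cap \bar\kappa$ with $d_\alpha = d_{\bar\kappa} \cap \alpha$. Case (2) is analogous but uses that ineffability in $V$ yields $d_{\bar\kappa} \in M$ directly, which after lifting gives the corresponding $D$ in $M[G_{\bar\kappa}]$. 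Case (3) requires $j^{{-}1}[d_{j[\delta]}] \in M[G_{\bar\kappa}]$ whenever this set lies in $N$, and the $\lambda^{{<}\kappa}$-ineffability of $\kappa$ in $V$ is used, rather than mere $\lambda$-ineffability, because the possible values of $j^{{-}1}[d_{j[\delta]}]$ in $V[G]$ are indexed by a space of $V$-names of size $\lambda^{{<}\kappa}$, and ineffability must be applied to a corresponding $\Poti{\lambda^{{<}\kappa}}{\kappa}$-list in $V$.

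The main obstacle will be the second clause of both definitions, namely that $\Poti{\bar\kappa}{\omega_1}^N \subseteq M[G_{\bar\kappa}]$, and in (3) also $\Poti{\delta}{\omega_1}^N \subseteq M[G_{\bar\kappa}]$. For (1) and (2) this is manageable because $\HH{\bar\kappa}^V \subseteq M$ together with the $\bar\kappa$-cc renders every countable subset of $\bar\kappa$ in $V[G_{\bar\kappa}]$ the interpretation of a name in $M$. For (3) the additional $\omega_1$-covering hypothesis (iv) is precisely what is needed: a countable subset of $\delta$ in $V[G_{\bar\kappa}]$ is covered by a countable set in $V$, which lies in $\HH{\bar\kappa}^V \subseteq M$, after which the $\omega_1$-approximation property captures the subset inside $M[G_{\bar\kappa}]$. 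Balancing the interplay between this covering hypothesis, the $\lambda^{{<}\kappa}$-sized name structure, and the approximation property is where the real technical content of part (3) resides.
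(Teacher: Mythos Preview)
Your overall plan---reduce each part to the corresponding internal AP property and then invoke Lemmas~\ref{lemma:InternalAPSubtleImpliesSSP} and~\ref{lemma:InternalLambdaIneffableImpliesISP}---matches the paper's, and the lifting skeleton (put the iteration and the names in $\ran{j}$, lift along $G_{\bar\kappa}$, take $N=\HH{\theta}^{V[G_{\bar\kappa}]}$) is correct. The gap is in the combinatorial core, and it is exactly the gap this paper was written to close. Lemmas~\ref{lemma:SubtleSmallChar}, \ref{lemma:IneffableSmallChar} and \ref{lemma:LambdaIneffableSmallChar} each require a \emph{ground-model} list as input; you only have names $\dot{\vec d}$. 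You never say which $\kappa$-list in $V$ you feed into subtlety, and your phrase ``extract a $V$-side combinatorial shadow of $d_{\bar\kappa}$ to which the ground-model subtlety witness applies'' describes a circular procedure: the subtlety witness is fixed in $V$ before $G$ exists, but the shadow (a $\vec{\PPP}_{{<}\bar\kappa}$-name for $d_{\bar\kappa}$) depends on $G$ and on $\bar\kappa=\crit j$. If instead you intend to choose in $V$ a uniform sequence of early names $\langle\dot e_\alpha\rangle$ with $\dot e_\alpha^G=d_\alpha$, that is precisely the step Section~\ref{section:application} shows cannot be carried out.

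The paper's repair (Theorems~\ref{theorem:ForceInternalAPLambdaIneffable} and~\ref{correctedlambdaineffable}) is a density argument you do not mention: below each condition $q$ one chooses not only early names $\dot e_\alpha$ but also \emph{tail conditions} $\dot r_\alpha$ forcing $\dot d_\alpha=\dot e_\alpha$; one encodes the full forcing description of $\dot e_\alpha$ (the sets $E_\alpha$ of triples $\langle s,\beta,i\rangle$) as a subset of $\alpha$, applies the large-cardinal property to \emph{that} ground-model list to obtain $E_\alpha=E_{\bar\kappa}\cap\alpha$, and then builds a single condition $u\le q$ below both $\langle q_\alpha,\dot r_\alpha\rangle$ and $\langle q_{\bar\kappa},\dot r_{\bar\kappa}\rangle$. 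One then passes to a generic $H\ni u$ (not the original $G$) in which $d_\alpha^H=d_{\bar\kappa}^H\cap\alpha$. A smaller separate issue in your treatment of (3): when $\lambda>\kappa$ one has $\delta>\bar\kappa$, so a countable subset of $\delta$ produced by hypothesis (iv) does \emph{not} lie in $\HH{\bar\kappa}^V$, and your argument for $\Poti{\delta}{\omega_1}^N\subseteq M[G_{\bar\kappa}]$ breaks down. The paper avoids covering entirely by invoking Lemma~\ref{lemma:LambdaIneffableClosureProperties}, which arranges $\Poti{\delta}{\crit j}^V\subseteq M$ already in $V$.
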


As pointed out in {\cite[Section 5]{MR2959668}}, William Mitchell's classical proof of the consistency of the tree property at successors of regular cardinals in \cite{MR0313057} shows that for every uncountable regular cardinal $\tau$ and every inaccessible cardinal $\kappa>\tau$, there is a forcing iteration $\vec{\PPP}$ satisfying the statements (i)-(iv) listed in Theorem \ref{theorem:WeissConsResults} such that $\mathbbm{1}_{\vec{\PPP}_{{<}\kappa}}\Vdash\anf{\check{\kappa}=\check{\tau}^+}$ and forcing with $\vec{\PPP}_{{<}\kappa}$ preserves all cardinals less than or equal to $\tau$.

In the following, we discuss what appears to be a serious problem in the arguments used to derive the above statements in \cite{weissthesis} and \cite{MR2959668}. Afterwards, we present new proofs for the statements listed in Theorem \ref{theorem:WeissConsResults} in the next section. These arguments use the small embedding characterizations of subtlety and of $\lambda$-ineffability from Section \ref{section:subtleineffable} to derive the consistency of the internal large cardinal principles from the corresponding large cardinal assumption.

We would first like to point out where the problematic step in Wei\ss's proof of statements (2) and (3) seems to be, and argue that it is indeed a problem, for Wei\ss's proof would in fact show a stronger result, one that is provably wrong. Let $\kappa$ be a $\lambda$-ineffable cardinal with $\lambda=\lambda^{{<}\kappa}$, let $\vec{\PPP}=\langle\seq{\vec{\PPP}_{{<}\alpha}}{\alpha\leq\kappa},\seq{\dot{\PPP}_\alpha}{\alpha<\kappa}\rangle$ be a forcing iteration satisfying the statements (i)-(iv) listed in Theorem \ref{theorem:WeissConsResults}, let $G$ be $\vec{\PPP}_{{<}\kappa}$-generic over $\VV$ and let $\vec{d}=\seq{d_a}{a\in\Poti{\lambda}{\kappa}^{\VV[G]}}$ be a slender $\Poti{\lambda}{\kappa}$-list in $\VV[G]$. The proofs of {\cite[Theorem 2.3.1]{weissthesis}} and {\cite[Theorem 5.4]{MR2959668}} then claim that there is a stationary subset $T$ of $\Poti{\lambda}{\kappa}$ in $\VV$ and $d\in\POT{\lambda}^{\VV[G]}$ such that $d_a=d\cap a$ holds for all $a\in T$. Since $\vec{\PPP}_{{<}\kappa}$ satisfies the $\kappa$-chain condition in $\VV$ and therefore preserves the stationarity of $T$, this argument would actually yield a strengthening of $\ISP(\kappa,\lambda)$ stating that every instance of the principle is witnessed by a stationary subset of $\Poti{\lambda}{\kappa}$ contained in the ground model $\VV$. In particular, this conclusion would imply that if $G$ is $\vec{\PPP}_{{<}\kappa}$-generic over $\VV$ and $\seq{d_\alpha}{\alpha<\kappa}$ is a $\kappa$-list in $\VV[G]$, then there is a stationary subset $S$ of $\kappa$ in $\VV$ such that $d_\alpha=d_\beta\cap\alpha$ holds for all $\alpha,\beta\in S$ with $\alpha<\beta$. The following observation shows that this statement provably fails if forcing with $\vec{\PPP}_{{<}\kappa}$ destroys the ineffability of $\kappa$.

\begin{proposition}
 Let $\langle\seq{\vec{\PPP}_{{<}\alpha}}{\alpha\leq\kappa},\seq{\dot{\PPP}_\alpha}{\alpha<\kappa}\rangle$ be a forcing iteration with the property that $\kappa$ is an uncountable regular cardinal,  $\vec{\PPP}_{{<}\kappa}$ is a direct limit and $\vec{\PPP}_{{<}\kappa}$ satisfies the $\kappa$-chain condition. Let $G$ be $\vec{\PPP}_{{<}\kappa}$-generic over $\VV$ and, given $\alpha<\kappa$, let $G_\alpha$ denote the filter on $\vec{\PPP}_{{<}\alpha}$ induced by $G$. Then one of the following statements holds: 
 \begin{enumerate}[leftmargin=0.7cm]
  \item There is an $\alpha<\kappa$ such that for all $\alpha\leq\beta<\kappa$, the partial order $\dot{\PPP}_\alpha^{G_\alpha}$ is trivial. 
  
  \item There is a slender $\kappa$-list $\seq{d_\alpha}{\alpha<\kappa}$ in $\VV[G]$ with the property that for every stationary subset $S$ of $\kappa$ in $\VV$, there are $\alpha,\beta\in S$ with $\alpha<\beta$ and $d_\alpha\neq d_\beta\cap\alpha$. 
 \end{enumerate} 
\end{proposition}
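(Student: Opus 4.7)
I prove the contrapositive, assuming (i) fails. First, observe that the set
\[
A=\{\alpha<\kappa : \text{some } p\in\vec{\PPP}_{<\alpha+1} \text{ forces } \dot{\PPP}_\alpha \text{ non-trivial}\}
\]
lies in $V$ and is unbounded in $\kappa$: otherwise, above some $\alpha_0$, every stage is forced trivial by $\mathbbm{1}$ and (i) would hold. For each $\alpha\in A$ fix a $\vec{\PPP}_{<\alpha+1}$-name $\dot{r}_\alpha$ for an element of $\{0,1\}$ such that, below some condition forcing non-triviality of $\dot{\PPP}_\alpha$, both values of $\dot{r}_\alpha$ are forced by incompatible extensions. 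For arbitrary $\alpha<\kappa$ set $\alpha^{*}=\min(A\cap[\alpha,\kappa))$ and $\dot{r}_\alpha'=\dot{r}_{\alpha^{*}}$. The direct-limit and $\kappa$-cc hypotheses yield $|\vec{\PPP}_{<\kappa}|=\kappa$; fix $e:\kappa\to\vec{\PPP}_{<\kappa}$ in $V$ with $e[\alpha]=\vec{\PPP}_{<\alpha}$ on a club and put $D=e^{-1}[G]\subseteq\kappa$. Define in $V[G]$
\[
d_\alpha=\begin{cases}D\cap\alpha, & \text{if } r_\alpha'=0,\\ (D\cap\alpha)\triangle\{0\}, & \text{if } r_\alpha'=1,\end{cases}
\]
with $d_0=\emptyset$.

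\emph{Slenderness.} By density arguments using the mutual independence of the factors $\dot{\PPP}_{\alpha^{*}}$ for distinct $\alpha^{*}\in A$, both $\{\alpha<\kappa : r_\alpha'=0\}$ and $\{\alpha<\kappa : r_\alpha'=1\}$ are unbounded in $V[G]$. Hence the set $C$ of $\gamma<\kappa$ below which both are cofinal is a club in $V[G]$. For $\gamma\in C$ and $\alpha<\gamma$, pick $\beta\in[\alpha,\gamma)$ with $r_\beta'=r_\gamma'$; a direct computation then gives $d_\beta\cap\alpha=d_\gamma\cap\alpha$, witnessing slenderness via $C$.

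\emph{No ground-model stationary witness.} Fix $V$-stationary $S$. Given $p\in\vec{\PPP}_{<\kappa}$ not forcing (i), the direct-limit hypothesis bounds the support of $p$ below some $\gamma_0<\kappa$. Using the unboundedness of $A$ and $S$, pick $\alpha\in S\cap(\gamma_0,\kappa)$, then $\alpha_1\in A\cap(\alpha,\kappa)$, and then $\beta\in S\cap(\alpha_1,\kappa)$, so that $\alpha^{*}\leq\alpha_1<\beta\leq\beta^{*}$, and in particular $\alpha^{*}<\beta^{*}$ with both above $\gamma_0$. Since $p$ does not force (i), there exist arbitrarily large $\alpha^{*}\in A$ admitting extensions of $p$ below which $\dot{\PPP}_{\alpha^{*}}$ is non-trivial; taking $\alpha^{*}$ sufficiently large and using the independence of the two distinct factors, we extend $p$ to $q\leq p$ forcing $\dot{r}_{\alpha^{*}}=0$ and $\dot{r}_{\beta^{*}}=1$, hence $\dot{r}_\alpha'\neq \dot{r}_\beta'$, and so $\dot{d}_\alpha\neq \dot{d}_\beta\cap\alpha$. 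Since $G$ itself contains no condition forcing (i), the set of conditions forcing ``$\exists\alpha<\beta\in\check{S},\ \dot{d}_\alpha\neq\dot{d}_\beta\cap\alpha$'' meets $G$ and the conclusion follows.

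\emph{Main obstacle.} The principal subtlety is that a $V$-stationary $S$ may be disjoint from $A$, since $A$ is only unbounded and need not be stationary; a bit-per-stage construction indexed directly by $A$ would then vanish on $S$. The shift $\alpha\mapsto\alpha^{*}=\min(A\cap[\alpha,\kappa))$ moves the bit attached to $\alpha$ to a later stage in $A$, and a cardinality/cofinality calculation shows that any $V$-stationary $S\subseteq\kappa$ must meet at least two of the half-open intervals determined by consecutive elements of $A$, yielding the required pair $\alpha<\beta\in S$ with $\alpha^{*}\neq\beta^{*}$. A secondary concern is that non-triviality of $\dot{\PPP}_\alpha$ at $\alpha\in A$ may itself depend on the generic below $\alpha$, so that conditions witnessing both values of $\dot{r}_\alpha$ need not be compatible with an arbitrary $p$; this is handled by restricting the density step to conditions $p$ that do not themselves force (i), which is sufficient because our $V[G]$ satisfies the negation of (i) and hence $G$ contains no such offending condition.
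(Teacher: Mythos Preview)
Your approach is close in spirit to the paper's, but the density argument in the ``no ground-model stationary witness'' step has a genuine gap. The problem is that your shift $\alpha\mapsto\alpha^{*}=\min(A\cap[\alpha,\kappa))$ is computed in $\VV$, using the set $A$ of stages at which \emph{some} condition forces non-triviality. For a given condition $p$ (even one not forcing (i)), it may happen that $p$ forces $\dot{\PPP}_{\alpha^{*}}$ trivial for every $\alpha$ in a given $\VV$-stationary $S$. Below such a $p$ your name $\dot r_{\alpha^{*}}$ is no longer controllable, since the two incompatible witnesses you fixed live only below a condition that is incompatible with $p$; you therefore cannot extend $p$ to force $\dot r_{\alpha^{*}}=0$ and $\dot r_{\beta^{*}}=1$.

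A concrete instance: let $\kappa=\omega_1$, let $\dot{\PPP}_0=\Add(\omega,2)$ add Cohen reals $c_0,c_1$, and for $\alpha>0$ let $\dot{\PPP}_\alpha$ be Cohen forcing if ($\alpha$ is a successor and $c_0(0)=1$) or ($\alpha$ is a limit and $c_1(0)=1$), and trivial otherwise. Then $A=[1,\omega_1)$, so $\alpha^{*}=\alpha$ for $\alpha>0$. Take $p$ to be the stage-$0$ condition forcing $c_0(0)=1$ and $c_1(0)=0$. Then $p$ does not force (i) (all successor stages are non-trivial below $p$), yet $p$ forces every limit stage trivial. For the $\VV$-stationary set $S$ of limit ordinals, every $\alpha^{*}=\alpha\in S$ is a stage forced trivial by $p$, and your density step cannot proceed. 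In a generic $G\ni p$ your list is coherent on $S$, so it fails to witness (ii).

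The paper circumvents this by defining the stage-assignment \emph{in $\VV[G]$}: it introduces $g\colon\kappa\to\kappa$ where $g(\alpha)$ is (essentially) the least stage $\geq\alpha$ that is actually non-trivial in $\VV[G]$, and lets $d_\alpha$ record which of two fixed incompatible conditions lies in the generic at stage $g(\alpha)$. Since $p\in G$ must force $\neg$(i) once it forces the coherence statement, any $q\leq p$ can be extended to decide $g(\alpha)=\beta$ for some $\beta$ with $\dot{\PPP}_\beta$ forced non-trivial, and then further extended at coordinate $\beta$ to force $d_\alpha=1-i$, yielding the contradiction. The essential difference is thus that the paper's assignment is self-correcting with respect to the actual generic, whereas your $\alpha^{*}$ is fixed in advance and can be systematically ``disabled'' by a single condition on a stationary set. (As a side remark, your use of $D=e^{-1}[G]$ is a red herring: since $d_\beta\cap\alpha$ and $d_\alpha$ agree on $D\cap\alpha$ and differ only at $0$ according to $r'_\alpha$ vs.\ $r'_\beta$, the set $D$ cancels entirely and your list is equivalent to the two-valued list $d_\alpha\in\{0,1\}$.)
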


\begin{proof}
 Pick a sequence $\seq{\langle\dot{q}^0_\alpha,\dot{q}^1_\alpha\rangle}{\alpha<\kappa}$ in $\VV$ such that the following statements hold for all $\alpha<\kappa$: 
 \begin{enumerate}[leftmargin=0.7cm]
  \item $\dot{q}_{\alpha,0}$ and $\dot{q}_{\alpha,1}$ are both $\vec{\PPP}_{{<}\alpha}$-names for a condition in $\dot{\PPP}_{\alpha}$. 
  
  \item If $H$ is $\vec{\PPP}_{{<}\alpha}$-generic over $\VV$, then the conditions $\dot{q}_{\alpha,0}^H$ and $\dot{q}_{\alpha,1}^H$ are compatible in $\dot{\PPP}_\alpha^H$ if and only if the partial order $\dot{\PPP}_\alpha^H$ is trivial. 
 \end{enumerate}

 Now, assume that (i) fails, and work in $\VV[G]$. Let $\map{g}{\kappa}{\kappa}$ denote the unique function with the property that for all $\beta<\kappa$, $g(\beta)$ is the minimal ordinal greater than or equal to $\sup_{\alpha<\beta}g(\alpha)$ such that $\dot{\PPP}_{g(\beta)}^{G_{g(\beta)}}$ is a non-trivial partial order. Since $\vec{\PPP}_{{<}\kappa}$ satisfies the $\kappa$-chain condition, there is a club subset $C$ of $\kappa$ in $\VV$ with $g(\alpha)<\beta$ for all $\alpha<\beta$ whenever $\beta\in C$.  Let $\vec{d}=\seq{d_\alpha}{\alpha<\kappa}$ denote the unique $\kappa$-list with the property that $$d_\alpha=0 ~ \Longleftrightarrow ~ d_\alpha\neq 1 ~ \Longleftrightarrow ~ \dot{q}_{g(\alpha),0}^{G_{g(\alpha)}}\in G^{g(\alpha)}$$ holds for every $\alpha<\kappa$, where $G^\beta$ denotes the filter on $\dot{\PPP}_\beta^{G_\beta}$ induced by $G$ for all $\beta<\kappa$. Then $\vec{d}$ is a slender $\kappa$-list. 
 
  Assume for a contradiction that there is a stationary subset $S$ of $\kappa$ in $\VV$ such that $d_\alpha=d_\beta\cap\alpha$ holds for all $\alpha,\beta\in S$ with $\alpha<\beta$. Then there is an $i<2$ with $d_\alpha=i$ for all $\alpha\in S$. Let $\dot{g}$ be a $\vec{\PPP}_{{<}\kappa}$-name for a function from $\kappa$ to $\kappa$ with $g=\dot{g}^G$ and let $\dot{d}$ be a $\vec{\PPP}_{{<}\kappa}$-name for a $\kappa$-list with $\vec{d}=\dot{d}^G$. Let $p$ be a condition in $G$ forcing all of the above statements. Pick a condition $q$ in $\vec{\PPP}_{{<}\kappa}$ below $p$
. Then there is $\alpha\in C\cap S$ with $q\in\vec{\PPP}_{{<}\alpha}$. 
By density, we can find a condition $s\in G$ below $q$, and $\alpha\leq\beta<\kappa$ with $g(\alpha)=\beta$, $s\in\vec{\PPP}_{{<}\beta+1}$ and $s(\beta)=\dot{q}_{\beta,1-i}$. But then $\dot{\PPP}_\beta^{G_\beta}$ is non-trivial, $\dot{q}_{\beta,1-i}^{G_\beta}\in G^\beta$ and $d_\alpha=1-i$, a contradiction. 
\end{proof}

In the argument that is supposed to prove the above statement, Wei{\ss} constructs a club $C$ in $\Poti{\lambda}{\kappa}$ in $\VV$ such that $d_a\in\VV[G_{a\cap\kappa}]$ holds for every $a\in C$ with the property that $a\cap\kappa$ is an inaccessible cardinal in $\VV$.  The problematic step then seems to be his conclusion that there exists a sequence $\seq{\dot{d}_a}{a\in C}$ in $\VV$ with the property that for all  $a\in C$ with $a\cap\kappa$ inaccessible in $\VV$, $\dot{d}_a$ is a $\vec{\PPP}_{{<}(a\cap\kappa)}$-name with $d_a=\dot{d}_a^G$.  This conclusion is problematic, because assuming the existence of such a sequence of names in $\VV$, it is easy to code the name $\dot{d}_a$ as a subset of $a$ and then use the $\lambda$-ineffability of $\kappa$ in $\VV$ to obtain a stationary subset of $\Poti{\lambda}{\kappa}$ in $\VV$ that witnesses the strengthening of $\ISP(\kappa,\lambda)$ formulated above. Therefore the above observation shows that such a sequence cannot exist in the ground model $\VV$. Since a similar argument is used in the proof of statement (1) of Theorem \ref{theorem:WeissConsResults} in \cite{weissthesis}, it is also not clear if these arguments can be modified to produce a correct proof of the statement.

In the next section, we will use the theory of small embeddings developed in this paper to present a different proof of the three statements listed in Theorem \ref{theorem:WeissConsResults}.

\section{The consistency of internal subtlety and of internal $\lambda$-ineffability}\label{section:ConsInternal}

Based on our small embedding characterizations of subtlety and of $\lambda$-ineffability, we will provide consistency proofs of the internal large cardinal principles introduced in Section \ref{section:intsubtleineffable}. By the results of that section, these proofs will in particular yield a slight strengthening of the statements listed in Theorem \ref{theorem:WeissConsResults}. In contrast to these statements, the results of this section do not rely on any kind of definability assumption and, in the case of $\lambda$-ineffable cardinals, we will not need to assume any kind of covering property of our iteration.

In combination with Lemma \ref{lemma:InternalAPSubtleImpliesSSP}, the following theorem directly yields a proof of statement (1) of Theorem \ref{theorem:WeissConsResults}. As already mentioned above, the results of \cite{MR0313057} show that there are forcing iterations with these properties that turn inaccessible cardinals into the successor of an uncountable regular cardinal. In particular, it is possible to establish the consistency of $\SSP(\omega_2)$ from a subtle cardinal.

\begin{theorem}\label{theorem:ForceInternalAPLambdaIneffable}
  Let $\vec{\PPP}=\langle \seq{\vec{\PPP}_{{<}\alpha}}{\alpha\leq\kappa},\seq{\dot{\PPP}_\alpha}{\alpha<\kappa}\rangle$ be a forcing iteration with $\kappa$ an uncountable and regular cardinal, such that the following statements hold for all inaccessible $\nu\leq\kappa$: 
  
  \begin{enumerate}[leftmargin=0.7cm] 
    \item $\vec{\PPP}_{{<}\nu}\subseteq\HH{\nu}$ is the direct limit of $\langle \seq{\vec{\PPP}_{{<}\alpha}}{\alpha<\nu},\seq{\dot{\PPP}_\alpha}{\alpha<\nu}\rangle$ and satisfies the $\nu$-chain condition. 
    
    \item If $G$ is $\vec{\PPP}_{{<}\kappa}$-generic over $\VV$ and $G_\nu$ is the filter on $\vec{\PPP}_{{<}\nu}$ induced by $G$, then the pair $(\VV[G_\nu],\VV[G])$ satisfies the $\omega_1$-approximation property.  
  \end{enumerate} 
If $\kappa$ is a subtle cardinal, then $\mathbbm{1}_{\vec{\PPP}_{{<}\kappa}}\Vdash\anf{\textit{$\kappa$ is internally AP subtle}}$.
\end{theorem}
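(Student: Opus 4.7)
The plan is to invoke the subtlety of $\kappa$ in $\VV$ via Lemma~\ref{lemma:SubtleSmallChar} to produce a ground-model small embedding, to lift it generically through $G$, and to use the intermediate extension $\VV[G_\delta]$, where $\delta$ is the critical point, as the witnessing model $N$ required in Definition~\ref{internallysubtle}. Working in $\VV[G]$, fix a sufficiently large regular $\theta$, some $x\in\HH{\theta}$, a club $C$ in $\kappa$, and a $\kappa$-list $\vec{d}$, and choose $\vec{\PPP}_{{<}\kappa}$-names $\dot{x}, \dot{C}, \dot{\vec{d}}$ living in $\HH{\vartheta}^\VV$ for some suitably large $\vartheta$ in $\VV$. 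By the $\kappa$-chain condition, there is a club $C^*\subseteq\kappa$ in $\VV$, consisting of $\VV$-inaccessible cardinals, with $\eins\Vdash\check{C^*}\subseteq\dot{C}$. Fixing a coherent family of bijections $b_\alpha:\HH{\alpha}^\VV\to\alpha$ for inaccessible $\alpha<\kappa$, define a $\kappa$-list $\vec{e}\in\VV$ by letting $e_\alpha$ code, via $b_\alpha$, the canonical reduction of $\dot{\vec{d}}_\alpha$ to a $\vec{\PPP}_{{<}\alpha}$-name when such a reduction exists, and $e_\alpha=\emptyset$ otherwise. Applying Lemma~\ref{lemma:SubtleSmallChar} to $\vec{e}$ and $C^*$ in $\VV$, with all the above parameters in the range, then yields a small embedding $\iota:M_0\to\HH{\vartheta}^\VV$ in $\VV$ with critical point $\delta\in C^*$ and some $\alpha_0\in C^*\cap\delta$ with $e_{\alpha_0}=e_\delta\cap\alpha_0$.

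\textbf{Generic lifting and choice of $N$, $M$.} By hypothesis~(i) and elementarity, $\vec{Q}:=\iota^{-1}(\vec{\PPP}_{{<}\kappa})$ is a direct-limit iteration of length $\delta$ in $M_0$, so every condition in $\vec{Q}$ has $M_0$-support bounded in $\delta$; since $\iota$ is the identity on $\delta$ and preserves supports by elementarity, $\iota[\vec{Q}]\subseteq\vec{\PPP}_{{<}\delta}$. Hence $H:=\iota^{-1}[\iota[\vec{Q}]\cap G_\delta]$ is an $M_0$-generic filter on $\vec{Q}$ that lies in $\VV[G_\delta]$, and the lift $\tilde\iota:M_0[H]\to\HH{\vartheta}^{\VV[G]}$, defined by $\tilde\iota(\tau^H):=\iota(\tau)^G$, is elementary with $\crit{\tilde\iota}=\delta$, $\tilde\iota(\delta)=\kappa$, and $x, C, \vec{d}\in\ran{\tilde\iota}$. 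I would take $N:=\HH{\theta}^{\VV[G_\delta]}$; this is a transitive model of $\ZFC^-$ sitting inside $\HH{\theta}^{\VV[G]}$ by the $\kappa$-chain condition, and the pair $(N,\HH{\theta}^{\VV[G]})$ inherits the $\omega_1$-approximation property from hypothesis~(ii). For $M$, take a transitive set $M\in N$ containing both $M_0[H]$ and $\Poti{\delta}{\omega_1}^N$, obtained by a Skolem-hull construction internal to $N$, and extend the appropriate restriction of $\tilde\iota$ to a small embedding $j:M\to\HH{\theta}^{\VV[G]}$; since every countable subset of $\delta$ is bounded below $\crit{\tilde\iota}=\delta$, the extension will fix such sets, preserving elementarity.

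\textbf{Verifying coherence, and main obstacle.} For the coherence clause of Definition~\ref{internallysubtle}, suppose $d_\delta\in N=\HH{\theta}^{\VV[G_\delta]}$; then $\dot{\vec{d}}_\delta$ admits a $\vec{\PPP}_{{<}\delta}$-reduction, so $e_\delta$ indeed codes it. Coherence of the family $b$ together with $e_{\alpha_0}=e_\delta\cap\alpha_0$ should translate into agreement of the corresponding $\vec{\PPP}$-name reductions on their initial segments up to $\alpha_0$; interpreting via $G$ then yields $d_{\alpha_0}=d_\delta\cap\alpha_0$, with $\alpha_0\in C^*\cap\delta\subseteq C$. The principal technical challenge will be the construction of $M$: simultaneously ensuring $M\in N$, $\Poti{\delta}{\omega_1}^N\subseteq M$, and that $j:M\to\HH{\theta}^{\VV[G]}$ extends $\tilde\iota$ elementarily on $M_0[H]$ while fixing countable subsets of $\delta$, requires combining the generic lift with an internal Skolem-hull construction in $N$ that crucially exploits that $\tilde\iota$ is the identity on $\delta$. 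A secondary point requiring attention is the precise design of the coherent coding family $b_\alpha$, so that the combinatorial identity $e_{\alpha_0}=e_\delta\cap\alpha_0$ genuinely corresponds to a coherent restriction of $\vec{\PPP}$-name reductions from $\vec{\PPP}_{{<}\delta}$ to $\vec{\PPP}_{{<}\alpha_0}$.
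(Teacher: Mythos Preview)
Your proposal contains the very gap that the paper's Section~\ref{section:application} identifies in Wei{\ss}'s original argument, and which the paper's proof is specifically designed to avoid.

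The problem is your definition of the ground-model list $\vec{e}$: you set $e_\alpha$ to code ``the canonical reduction of $\dot{\vec{d}}_\alpha$ to a $\vec{\PPP}_{{<}\alpha}$-name when such a reduction exists''. This is a $\VV$-definition, so whether $e_\alpha$ is nontrivial is decided in $\VV$, independently of $G$. But your verification step reasons: ``suppose $d_\delta\in N=\HH{\theta}^{\VV[G_\delta]}$; then $\dot{\vec{d}}_\delta$ admits a $\vec{\PPP}_{{<}\delta}$-reduction''. This inference is false. The hypothesis $d_\delta\in\VV[G_\delta]$ only yields a $\vec{\PPP}_{{<}\delta}$-name $\sigma$ and a condition $p\in G$ with $p\Vdash\dot{\vec{d}}_\delta=\sigma$; it does not yield a name that the trivial condition forces equal to $\dot{\vec{d}}_\delta$, and in general no such name exists. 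Consequently $e_\delta$ may well be $\emptyset$ even when $d_\delta\in\VV[G_\delta]$, and the coherence clause cannot be verified. The Proposition in Section~\ref{section:application} shows that any argument of this shape, producing coherence for the \emph{given} generic $G$ from a ground-model sequence of early names, would prove a statement that is outright false.

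The paper's proof handles this by a two-case density argument below an arbitrary condition $q$. In the interesting case one fixes, for each relevant $\nu$, not only a $\vec{\PPP}_{{<}\nu}$-name $\dot{e}_\nu$ and a condition $q_\nu\leq q$ in $\vec{\PPP}_{{<}\nu}$, but crucially also a \emph{tail} condition $\dot{r}_\nu$ in $\dot{\PPP}_{[\nu,\kappa)}$, so that $\langle q_\nu,\dot{r}_\nu\rangle$ forces $\dot{d}_\nu=\dot{e}_\nu$. Subtlety is then applied to a list coding both $q_\nu$ and the forcing behaviour of $\dot{e}_\nu$ (together with the list from Lemma~\ref{Lemma:CritIndes} to guarantee inaccessibility of the critical point). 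Coherence $E_\alpha\subseteq E_\nu$ and $q_\alpha=q_\nu$ then allow one to build a single condition $u$ extending both $\langle q_\alpha,\dot{r}_\alpha\rangle$ and $\langle q_\nu,\dot{r}_\nu\rangle$, and one passes to a generic $H\ni u$ rather than the original $G$; the conclusion for $G$ follows by density. Your argument omits the tail conditions entirely.

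Two smaller points: there is no club in $\kappa$ consisting of inaccessible cardinals, so your $C^*$ does not exist as described; and your proposed enlargement of $M_0[H]$ to some $M\supseteq\Poti{\delta}{\omega_1}^N$ via a Skolem hull, followed by an extension of $\tilde\iota$, is not well-defined --- elementary embeddings cannot be extended to larger domains at will. Once $\delta=\crit{j}$ is known to be inaccessible, Lemma~\ref{lemma:ModelsContainInitialSegment} and the $\delta$-chain condition give $\HH{\delta}^N\subseteq M_0[G_\delta]$ directly, so no enlargement is needed.
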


\begin{proof}
 Let $\dot{d}$ be a $\vec{\PPP}_{{<}\kappa}$-name for a $\kappa$-list, let $\dot{C}$ be a $\vec{\PPP}_{{<}\kappa}$-name for a club in $\kappa$ and let $\dot{x}$ be any $\vec{\PPP}_{{<}\kappa}$-name. Since $\vec{\PPP}_{{<}\kappa}$ satisfies the $\kappa$-chain condition, there is a club $C\subseteq\Lim$ in $\kappa$ such that  $\mathbbm{1}_{\vec{\PPP}_{{<}\kappa}}\Vdash\anf{\check{C}\subseteq\dot{C}}$ and all elements of $C$ are closed under the G{\"o}del pairing function $\goedel{\cdot}{\cdot}$. Given $\alpha<\kappa$, let $\dot{d}_\alpha$ be a $\vec{\PPP}_{{<}\kappa}$-nice name for the $\alpha$-th component of $\dot{d}$. Pick a regular cardinal $\theta>2^\kappa$ with $\dot{d},\dot{x},C,\dot{C},\vec{\PPP}\in\HH{\theta}$, which is sufficiently large with respect to Statement (ii) in Lemma \ref{lemma:SubtleSmallChar}.  
 Let $G$ be $\vec{\PPP}_{{<}\kappa}$-generic over $\VV$. 
 
 First, assume that there is an inaccessible cardinal $\nu<\kappa$ in $\VV$ and a small embedding $\map{j}{M}{\HH{\theta}^\VV}$ for $\kappa$ in $\VV$ such that $\dot{d},\dot{x},C,\dot{C},\vec{\PPP}\in\ran{j}$, $\nu=\crit{j}$ and $\dot{d}_\nu^G\notin\VV[G_\nu]$. Then our assumptions on $\vec{\PPP}$ imply that $\vec{\PPP}_{{<}\nu}\in M$, $j(\vec{\PPP}_{{<}\nu})=\vec{\PPP}_{{<}\kappa}$ and $j\restriction\vec{\PPP}_{{<}\nu}=\id_{\vec{\PPP}_{{<}\nu}}$. Hence it is possible to lift $j$ in order to obtain a small embedding $\map{j_*}{M[G_{\nu}]}{\HH{\theta}^{\VV[G]}}$ for $\kappa$ in $\VV[G]$ with $\dot{d}^G,\dot{x}^G,\dot{C}^G\in\ran{j_*}$. Set $N=\HH{\theta}^{\VV[G_{\nu}]}$. Then our assumptions imply that $M[G_\nu]\in N\subseteq\HH{\theta}^{\VV[G]}$ and the pair $(N,\HH{\theta}^{\VV[G]})$ satisfies the $\omega_1$-approximation property. Moreover, we also have $\HH{\nu}^N\subseteq M[G_\nu]$, because Lemma \ref{lemma:ModelsContainInitialSegment} shows that $\HH{\nu}^\VV\subseteq M$ and $\vec{\PPP}_{{<}\nu}$ satisfies the $\nu$-chain condition in $\VV$. Since $\dot{d}_\nu^G\notin\VV[G_\nu]$, this shows that the embedding $j_*$ and the model $N$ witness that $\kappa$ is internally $AP$ subtle with respect to $\dot{d}^G$, $\dot{x}^G$ and $\dot{C}^G$ in $\VV[G]$. 

Next, assume that $\dot{d}_\nu^G\in\VV[G_\nu]$ holds for every $\nu$ contained in the set $A$ of all inaccessible cardinals $\nu<\kappa$ in $\VV$ with the property that there is small embedding $\map{j}{M}{\HH{\theta}^\VV}$ for $\kappa$ in $\VV$ with $\crit{j}=\nu$ and $\dot{d},\dot{x},C,\dot{C},\vec{\PPP}\in\ran{j}$. Let $p\in G$ be a condition forcing this statement. Work in $\VV$ and pick a condition $q$ below $p$ in $\vec{\PPP}_{{<}\kappa}$. We let $A_*$ denote the set of all $\nu\in A$ with $q\in\vec{\PPP}_{{<}\nu}$. With the help of our assumption and the fact that $\vec{\PPP}_{{<}\kappa}$ satisfies the $\kappa$-chain condition, we find a function $\map{g}{A_*}{\kappa}$ and sequences $\seq{q_\nu}{\nu\in A_*}$, $\seq{\dot{r}_\nu}{\nu\in A_*}$ and $\seq{\dot{e}_\nu}{\nu\in A_*}$ such that the following statements hold for all $\nu\in A_*$: 
 \begin{enumerate}[leftmargin=0.7cm] 
  \item[(1)] $g(\nu)>\nu$ and $\dot{d}_\nu$ is a $\vec{\PPP}_{{<}g(\nu)}$-name. 
 
  \item[(2)] $q_\nu$ is a condition in $\vec{\PPP}_{{<}\nu}$ below $q$. 
  
  \item[(3)] $\dot{r}_\nu$ is a $\vec{\PPP}_{{<}\nu}$-name for a condition in the corresponding tail forcing $\dot{\PPP}_{[\nu,g(\nu))}$. \footnote{Let us point out that the problematic argument in Wei\ss's original proof can be seen as him assuming that the name $\dot{r}_\nu$ is just the name for the trivial condition in the corresponding tail forcing.} 
  
  \item[(4)] $\dot{e}_\nu$ is a $\vec{\PPP}_{{<}\nu}$-name for a subset of $\nu$ with $\langle q_\nu,\dot{r}_\nu\rangle\Vdash_{\vec{\PPP}_{{<}\nu}*\dot{\PPP}_{[\nu,g(\nu))}}\anf{\dot{d}_\nu=\dot{e}_\nu}$. 
 \end{enumerate}
 Given $\nu\in A_*$, let $E_\nu$ denote the set of all triples $\langle s,\beta,i\rangle\in\vec{\PPP}_{{<}\nu}\times\nu\times 2\subseteq\HH{\nu}$ with $$s\Vdash_{\vec{\PPP}_{{<}\nu}}\anf{\check{\beta}\in \dot{e}_\nu ~ \longleftrightarrow  ~ i=1}.$$
 
Let $\vec{c}=\seq{c_\alpha}{\alpha<\kappa}$ be the $\kappa$-list, and let $C_*$ be the club in $\kappa$, obtained from an application of Lemma \ref{Lemma:CritIndes}. Fix a bijection $\map{f}{\kappa}{\HH{\kappa}}$ with $f[\nu]=\HH{\nu}$ for every inaccessible cardinal $\nu<\kappa$. Let $\vec{d}=\seq{d_\alpha}{\alpha<\kappa}$ be the unique $\kappa$-list such that the following statements hold for all $\alpha<\kappa$: 
 \begin{enumerate}[leftmargin=0.7cm] 
  \item[(a)] If $\alpha\in A_*$, then $$d_\alpha ~ = ~ \{\goedel{0}{0}\}\cup\{\goedel{f^{{-}1}(q_\alpha)}{1}\}\cup\Set{\goedel{f^{{-}1}(e)}{2}}{e\in E_\alpha} ~ \subseteq ~ \alpha.$$
  
  \item[(b)] If $\omega\subseteq\alpha\notin A_*$ and $\alpha$ is closed under $\goedel{\cdot}{\cdot}$, then $$d_\alpha ~ = ~ \{\goedel{1}{0}\}\cup\Set{\goedel{\beta}{1}}{\beta\in c_\alpha} ~ \subseteq ~ \alpha.$$
  
  \item[(c)] Otherwise, $d_\alpha$ is the empty set. 
 \end{enumerate}

 Let $\map{j}{M}{\HH{\theta}}$ be a small embedding for $\kappa$ witnessing the subtlety of $\kappa$ with respect to $\vec{d}$ and $C\cap C_*$, as in Statement (ii) of Lemma \ref{lemma:SubtleSmallChar}, such that $\vec{c},\dot{d},\vec{e},f,g,q,\dot{x},C,C_*,\dot{C},\vec{\PPP}\in\ran{j}$. Set $\nu=\crit{j}$ and pick $\alpha\in C\cap C_*\cap\nu$ with $d_\alpha=d_\nu\cap\alpha$. Then $\omega\leq\alpha<\nu$ and both $\alpha$ and $\nu$ are closed under $\goedel{\cdot}{\cdot}$. 
 
 Assume for a contradiction that $\nu\notin A_*$. This implies that $\goedel{1}{0}\in d_\alpha$ and therefore $\alpha\notin A_*$. But then $c_\alpha=c_\nu\cap\alpha$, and  $j$ witnesses the subtlety of $\kappa$ with respect to $\vec{c}$ and $C_*$, as in statement (ii) of Lemma \ref{lemma:SubtleSmallChar}. By Lemma \ref{Lemma:CritIndes}, this implies that $\nu$ is inaccessible, and hence $j$ witnesses that $\nu$ is an element of $A_*$, a contradiction. 

Hence $\nu\in A_*$, and this implies that $\goedel{0}{0}\in d_\alpha$, $\alpha\in A_*$, $g(\alpha)<\nu$, $q_\alpha=q_\nu\in\vec{\PPP}_{{<}\alpha}$ and $E_\alpha\subseteq E_\nu$. Pick a condition $u$ in $\vec{\PPP}_{{<}\kappa}$ such that the canonical condition in $\vec{\PPP}_{{<}\alpha}*\dot{\PPP}_{[\alpha,\nu)}$ corresponding to $u\restriction\nu$ is stronger than $\langle q_\alpha,\dot{r}_\alpha\rangle$ and the canonical condition in $\vec{\PPP}_{{<}\nu}*\dot{\PPP}_{[\nu,\kappa)}$ corresponding to $u$ is stronger than $\langle u\restriction\nu,\dot{r}_\nu\rangle$. Let $H$ be $\vec{\PPP}_{{<}\kappa}$-generic over $\VV$ with $u\in H$, let $H_\alpha$ denote the filter on $\vec{\PPP}_{{<}\alpha}$ induced by $H$ and let $H_\nu$ denote the filter on $\vec{\PPP}_{{<}\nu}$ induced by $H$. Then $\dot{d}_\alpha^H=\dot{e}_\alpha^{H_\alpha}\in\VV[H_\alpha]$, and $\dot{d}_\nu^H=\dot{e}_\nu^{H_\alpha}\in\VV[H_\nu]$. 
If $\beta\in \dot{d}_\alpha^H$, then there is $s\in H_\alpha\subseteq H_\nu$ with $\langle s,\beta,1\rangle\in E_\alpha\subseteq E_\nu$, and this implies that $\beta\in\dot{d}_\nu^H$. In the other direction, if $\beta\in\alpha\setminus\dot{d}_\alpha^H$, then there is $s\in H_\alpha$ with $\langle s,\beta,0\rangle\in E_\alpha$, and hence $\beta\notin\dot{d}_\nu^H$. This shows that $\dot{d}_\alpha^H=\dot{d}_{\crit{j_*}}^H\cap\alpha$. Set $N=\HH{\theta}^{\VV[H_\nu]}$ and let $\map{j_*}{M[H_\nu]}{\HH{\theta}^{\VV[H]}}$ denote the lift of $j$ in $\VV[H]$. Then $j_*$ is a small embedding for $\kappa$ in $\VV[H]$ with $\dot{d}^H,\dot{x}^H,\dot{C}^H\in\ran{j}$, $M[H_\nu]\in N\subseteq \HH{\theta}^{\VV[H]}$, and the pair $(N,\HH{\theta}^{\VV[H]})$ satisfies the $\omega_1$-approximation property. Moreover, since Lemma \ref{lemma:ModelsContainInitialSegment} shows that $\HH{\nu}^\VV\subseteq M$ and $\vec{\PPP}_{{<}\nu}$ satisfies the $\nu$-chain condition in $\VV$, we also have $\HH{\nu}^N\subseteq M[H_\nu]$. Finally, we have $\alpha\in\dot{C}^H\cap\crit{j_*}$ with $\dot{d}_\alpha^H=\dot{d}_{\crit{j_*}}^H\cap\alpha$. 

 Since $u\leq_{\vec{\PPP}_{{<}\kappa}}q$ holds in the above computations, a density argument shows that there is a small embedding $\map{j}{M}{\HH{\theta}^{\VV[G]}}$ for $\kappa$ in $\VV[G]$ witnessing that $\kappa$ is internally AP subtle with respect to $\dot{d}^G$, $\dot{x}^G$ and $\dot{C}^G$ in $\VV[G]$. 
\end{proof}

A variation of the above proof, using Lemma \ref{lemma:LambdaIneffableClosureProperties}, allows us to establish the consistency of internal AP $\lambda$-ineffability for accessible cardinals. Note that since $\lambda^{{<}\kappa}=(\lambda^{{<}\kappa})^{{<}\kappa}$ and $\ISP(\kappa,\lambda^{{<}\kappa})$ implies $\ISP(\kappa,\lambda)$ (see {\cite[Proposition 3.4]{MR2959668}}), a combination of the following result and Lemma \ref{lemma:InternalLambdaIneffableImpliesISP} implies the statements (2) and (3) listed in Theorem \ref{theorem:WeissConsResults}. Moreover, note that results of Chris Johnson in \cite{MR1111312} show that if $\kappa$ is $\lambda$-ineffable and $\cof\lambda\geq\kappa$, then $\lambda=\lambda^{{<}\kappa}$  (see also \cite[Proposition 1.5.4]{weissthesis}).

\begin{theorem}\label{correctedlambdaineffable}
 Let $\kappa$ be a cardinal, and let $\vec{\PPP}=\langle \seq{\vec{\PPP}_{{<}\alpha}}{\alpha\leq\kappa},\seq{\dot{\PPP}_\alpha}{\alpha<\kappa}\rangle$ be a forcing iteration satisfying the statements listed in Theorem \ref{theorem:ForceInternalAPLambdaIneffable}. If $\kappa$ is a $\lambda$-ineffable cardinal with $\lambda=\lambda^{{<}\kappa}$, then $\mathbbm{1}_{\vec{\PPP}_{{<}\kappa}}\Vdash\anf{\textit{$\kappa$ is internally AP $\lambda$-ineffable}}$. 
\end{theorem}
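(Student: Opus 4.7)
The plan is to mirror the proof of Theorem \ref{theorem:ForceInternalAPLambdaIneffable}, with Lemma \ref{lemma:LambdaIneffableSmallChar} playing the role of Lemma \ref{lemma:SubtleSmallChar} and Lemma \ref{lemma:LambdaIneffableClosureProperties} (applicable since $\lambda=\lambda^{{<}\kappa}$) playing the role of Lemma \ref{Lemma:CritIndes}. Let $\dot d$ be a $\vec{\PPP}_{<\kappa}$-name for a $\Poti{\lambda}{\kappa}$-list in $\VV[G]$, let $\dot x$ be any $\vec{\PPP}_{<\kappa}$-name, and work in $\VV$ below a fixed condition $q$.

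The argument divides into two cases. First, suppose there exist an inaccessible $\nu<\kappa$ in $\VV$ and a small embedding $\map{j}{M}{\HH\theta^\VV}$ witnessing the $\lambda$-ineffability of $\kappa$ with respect to the list $\vec d^*$ from Lemma \ref{lemma:LambdaIneffableClosureProperties}, with $\dot d,\dot x,\vec{\PPP},q,\vec d^*,x^*\in\ran j$, $\crit j=\nu$, and $\delta\in M\cap\kappa$ satisfying $j(\delta)=\lambda$, such that the collapse $\pi_{j[\delta]}(\dot d^G_{j[\delta]})$ does not lie in $\VV[G_\nu]$ (where $\pi_{j[\delta]}$ denotes the transitive collapse of $j[\delta]$). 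Then I would lift $j$ to $\map{j_*}{M[G_\nu]}{\HH\theta^{\VV[G]}}$ and set $N=\HH\theta^{\VV[G_\nu]}$: clause (i) of Definition \ref{internallylambdaineffable} follows from our hypothesis on $\vec{\PPP}$; clause (ii) follows since $\HH\nu^\VV\subseteq M$ (Lemma \ref{lemma:ModelsContainInitialSegment}), $\vec{\PPP}_{<\nu}\in M$ (via direct-limit absoluteness), and the $\nu$-chain condition together give $\Poti{\delta}{\omega_1}^N\subseteq\HH\nu^{\VV[G_\nu]}\subseteq M[G_\nu]$; and clause (iii) is vacuous.

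Otherwise there is a condition $p\le q$ below which every suitable small embedding witnesses capturing. For each $a\in\Poti{\lambda}{\kappa}^\VV$ admitting such a small embedding below $p$, by $\AC$ and the $\kappa$-chain condition I would fix an ordinal $g(a)<\kappa$, a condition $q_a\in\vec{\PPP}_{<\nu(a)}$ below $p$ (where $\nu(a)=a\cap\kappa$), a $\vec{\PPP}_{<\nu(a)}$-name $\dot r_a$ for a condition in $\dot{\PPP}_{[\nu(a),g(a))}$, and a $\vec{\PPP}_{<\nu(a)}$-name $\dot e_a$ for a subset of $|a|$ with $\langle q_a,\dot r_a\rangle\Vdash\dot e_a=\pi_a(\dot d_a)$, and record as $E_a$ the triples $\langle s,\beta,i\rangle\in\vec{\PPP}_{<\nu(a)}\times|a|\times 2$ with $s\Vdash\check\beta\in\dot e_a\leftrightarrow i=1$, precisely as in the subtle proof. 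Fixing a well-ordering $\prec$ of $\vec{\PPP}_{<\kappa}$ and a bijection $f:\lambda\to\HH\lambda^\VV$ with $f[\gamma]=\HH\gamma^\VV$ for inaccessible $\gamma\le\lambda$, I would use $\prec$ and $f$ to encode $(q_a,E_a,g(a))$ as the image under $\pi_a^{{-}1}$ of a specific subset of $|a|$, yielding a $\Poti{\lambda}{\kappa}^\VV$-list $\vec e$. Applying Lemma \ref{lemma:LambdaIneffableSmallChar} to a list combining $\vec e$ with $\vec d^*$, and using Lemma \ref{lemma:addxtorange} to pull $\dot d,\dot x,\vec{\PPP},q,p,f,\prec,\vec e,\vec d^*,x^*$ into $\ran j$, delivers a small embedding $\map{j}{M}{\HH\theta^\VV}$ with $\nu=\crit j$ inaccessible, $\Poti{\delta}{\nu}^\VV\subseteq M$, and $j^{{-}1}[e_{j[\delta]}]\in M$. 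Since the $j^{{-}1}$-images of $\prec$ and $f$ agree with their restrictions to the relevant initial segments, decoding $j^{{-}1}[e_{j[\delta]}]$ inside $M$ recovers $(q_{j[\delta]},E_{j[\delta]},g(j[\delta]))$ as elements of $M$, and $\dot e_{j[\delta]}$ is then explicitly definable in $M$ from $E_{j[\delta]}$. Following the end of the subtle proof, I would then choose a condition $u\le p$ in $\vec{\PPP}_{<\kappa}$ whose canonical image in $\vec{\PPP}_{<\nu}*\dot{\PPP}_{[\nu,g(j[\delta]))}$ lies below $\langle q_{j[\delta]},\dot r_{j[\delta]}\rangle$; for any generic $H\ni u$ the lift $\map{j_*}{M[H_\nu]}{\HH\theta^{\VV[H]}}$ together with $N=\HH\theta^{\VV[H_\nu]}$ witnesses internal AP $\lambda$-ineffability in $\VV[H]$, with clause (iii) secured by $\pi_{j[\delta]}(\dot d^H_{j[\delta]})=\dot e_{j[\delta]}^{H_\nu}\in M[H_\nu]$; a density argument transfers this to $G$.

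The principal obstacle I expect is ensuring that the encoding--decoding diagram for $(q_a,E_a,g(a))$ commutes between $\VV$ and $M$. This is handled by placing the well-ordering $\prec$ and the bijection $f$ into $\ran j$ via Lemma \ref{lemma:addxtorange} (exactly mirroring the role of $f$ in the subtle proof) and exploiting the identities $\vec{\PPP}_{<\nu}=j^{{-}1}(\vec{\PPP})$ and $j\upharpoonright\HH\nu^\VV=\id$, which together guarantee that $M$'s decoding reproduces the $\VV$-side triple on the nose.
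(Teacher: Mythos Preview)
Your approach is essentially the paper's: the same two-case split, with the second case encoding the nice $\vec{\PPP}_{{<}\nu_a}$-names $\dot e_a$ into a $\Poti{\lambda}{\kappa}$-list, merging with the list from Lemma \ref{lemma:LambdaIneffableClosureProperties}, applying Lemma \ref{lemma:LambdaIneffableSmallChar}, and lifting below a condition extending $\langle q_{j[\delta]},\dot r_{j[\delta]}\rangle$.

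The paper streamlines your encoding in two ways. First, it takes $\dot e_a$ to be a nice $\vec{\PPP}_{{<}\nu_a}$-name for a subset of $a$ itself (not of $|a|$) and sets $d_a=\{\goedel{f^{-1}(s)}{\beta}:\langle\check\beta,s\rangle\in\dot e_a\}\subseteq a$ directly, using a bijection $f\colon\kappa\to\HH{\kappa}$ with $f[\nu]=\HH{\nu}$ for inaccessible $\nu<\kappa$. Your detour through $|a|$, $\pi_a$ and $f\colon\lambda\to\HH{\lambda}$ then disappears, and one need not recover $q_{j[\delta]}$, $E_{j[\delta]}$ or the name $\dot e_{j[\delta]}$ as elements of $M$: having $j^{-1}[d_{j[\delta]}]\in M$ together with $f\restriction\nu\in M$ already lets one compute $j^{-1}[\dot d^H_{j[\delta]}]$ inside $M[H_\nu]$, while $q_{j[\delta]}$ and $\dot r_{j[\delta]}$ are used only in $\VV$ to choose $u$. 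Second, the paper takes $\dot r_a$ to name a condition in the full tail $\dot{\PPP}_{[\nu_a,\kappa)}$ and drops $g(a)$ entirely; unlike the subtle case, no nesting of two tail conditions is needed here, so the bound is unnecessary---and in fact your plan to code $g(a)$ into a subset of $|a|$ is not sound as stated, since there is no reason to expect $g(a)<|a|$.
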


\begin{proof}
 Let $\dot{d}$ be a $\vec{\PPP}_{{<}\kappa}$-name for a $\Poti{\lambda}{\kappa}$-list and let $\dot{x}$ be any $\vec{\PPP}_{{<}\kappa}$-name. Given $a\in\Poti{\lambda}{\kappa}$, let $\dot{d}_a$ be a $\vec{\PPP}_{{<}\kappa}$-nice name for the $a$-th component of $\dot{d}$. Fix a bijection $\map{f}{\kappa}{\HH{\kappa}}$ with $f[\nu]=\HH{\nu}$ for every inaccessible cardinal $\nu<\kappa$. Pick a regular cardinal $\theta>2^\lambda$ with $\dot{d},\dot{x},\vec{\PPP}\in\HH{\theta}$, which is sufficiently large with respect to Statement (ii) in Lemma \ref{lemma:LambdaIneffableSmallChar}.  
 Let $G$ be $\vec{\PPP}_{{<}\kappa}$-generic over $\VV$. 

First, assume that there is an inaccessible cardinal $\nu<\kappa$ in $\VV$, a small embedding $\map{j}{M}{\HH{\theta}^\VV}$ for $\kappa$ in $\VV$ and $\delta\in M\cap\kappa$ such that $\dot{d},f,\dot{x},\vec{\PPP}\in\ran{j}$, $\nu=\crit{j}$, $j(\delta)=\lambda$, $\Poti{\delta}{\nu}^\VV\subseteq M$ and $\dot{d}_{j[\delta]}^G\notin\VV[G_\nu]$. Let $\map{j_*}{M[G_{\nu}]}{\HH{\theta}^{\VV[G]}}$ denote the corresponding lift of $j$. Then $j_*$ is a small embedding for $\kappa$ in $\VV[G]$ with $\dot{d}^G,\dot{x}^G\in\ran{j_*}$. If we define $N=\HH{\theta}^{\VV[G_{\nu}]}$, then our assumptions imply that $M[G_\nu]\in N\subseteq\HH{\theta}^{\VV[G]}$ and the pair $(N,\HH{\theta}^{\VV[G]})$ satisfies the $\omega_1$-approximation property.  Moreover, since $\Poti{\delta}{\nu}^\VV\subseteq M$ and $\vec{\PPP}_{{<}\nu}$ satisfies the $\nu$-chain condition in $\VV$, we know that $\Poti{\delta}{\nu}^N\subseteq M[G_\nu]$.  Since  $\dot{d}_{j[\delta]}^G\notin\VV[G_\nu]$ implies that $j_*^{{-}1}[\dot{d}_{j_*[\delta]}^G]\notin N$, we can conclude that $j_*$, $\delta$ and $N$ witness that $\kappa$ is internally AP $\lambda$-ineffable with respect to $\dot{d}^G$ and $\dot{x}^G$ in $\VV[G]$. 

Next, assume that $\dot{d}_a^G\in\VV[G_\nu]$ holds for all elements of the set $A$ of all $a\in\Poti{\lambda}{\kappa}^\VV$ with the property that there is small embedding $\map{j}{M}{\HH{\theta}^\VV}$ for $\kappa$ in $\VV$ and $\delta\in M\cap\kappa$ with $j(\delta)=\lambda$, $a=j[\delta]$, $\nu_a=\crit{j}=a\cap\kappa$ is an inaccessible cardinal in $\VV$, $\Poti{\delta}{\nu_a}^\VV\subseteq M$ and $\dot{d},f,\dot{x},\vec{\PPP}\in\ran{j}$. Let $p\in G$ be a condition forcing this statement. Work in $\VV$ and pick a condition $q$ below $p$ in $\vec{\PPP}_{{<}\kappa}$. We let $A_*$ denote the set of all $a\in A$ with $q\in\vec{\PPP}_{{<}\nu_a}$. Then all elements of $A_*$ are closed under $\goedel{\cdot}{\cdot}$ and, with the help of our assumption and the fact that $\vec{\PPP}_{{<}\kappa}$ satisfies the $\kappa$-chain condition, we find sequences $\seq{q_a}{a\in A_*}$, $\seq{\dot{r}_a}{a\in A_*}$ and $\seq{\dot{e}_a}{a\in A_*}$ such that the following statements hold for all $a\in A_*$: 
 \begin{enumerate}[leftmargin=0.7cm]  
  \item[(1)] $q_a$ is a condition in $\vec{\PPP}_{{<}\nu_a}$ below $q$. 
  
  \item[(2)] $\dot{r}_a$ is a $\vec{\PPP}_{{<}\nu_a}$-name for a condition in the corresponding tail forcing $\dot{\PPP}_{[\nu_a,\kappa)}$.\footnote{Let us point out that the problematic argument in Wei\ss's original proof can be seen as him assuming that the name $\dot{r}_a$ is just the name for the trivial condition in the corresponding tail forcing.} 
  
  \item[(3)] $\dot{e}_a$ is a $\vec{\PPP}_{{<}\nu_a}$-nice name for a subset of $a$ with $\langle q_a,\dot{r}_a\rangle\Vdash_{\vec{\PPP}_{{<}\nu_a}*\dot{\PPP}_{[\nu_a,\kappa)}}\anf{\dot{d}_a=\dot{e}_a}$. 
 \end{enumerate}
 
 Let $\vec{c}=\seq{c_a}{a\in\Poti{\lambda}{\kappa}}$ be the $\Poti{\lambda}{\kappa}$-list given by  Lemma \ref{lemma:LambdaIneffableClosureProperties} and let $\vec{d}=\seq{d_a}{a\in\Poti{\lambda}{\kappa}}$ be the unique $\Poti{\lambda}{\kappa}$-list with $$d_a ~ = ~ \Set{\goedel{f^{{-}1}(s)}{\beta}}{\langle\check{\beta},s\rangle\in\dot{e}_a} ~ \subseteq ~ a$$ for all $a\in A_*$ and $d_a=c_a$ for all $a\in\Poti{\lambda}{\kappa}\setminus A_*$. Pick a small embedding $\map{j}{M}{\HH{\theta}}$ for $\kappa$ and $\delta\in M\cap\kappa$ that witness the $\lambda$-ineffability of $\kappa$ with respect to $\vec{d}$, as in Statement (ii) of Lemma \ref{lemma:LambdaIneffableSmallChar}, such that $\vec{c},\dot{d},f,q,\dot{x},\vec{\PPP}\in\ran{j}$. 
 
Assume for a contradiction that $j[\delta]\notin A_*$. Then $d_{j[\delta]}=c_{j[\delta]}$ and hence $j^{{-}1}[c_{j[\delta]}]\in M$. This shows that $j$ and $\delta$ witness the $\lambda$-ineffability of $\kappa$ with respect to $\vec{c}$, and Lemma \ref{lemma:LambdaIneffableClosureProperties} implies that $\crit{j}$ is an inaccessible cardinal and $\Poti{\delta}{\crit{j}}\subseteq M$. But then $j$ and $\delta$ also witness that $j[\delta]$ is an element of $A_*$, a contradiction.  

Hence $j[\delta]\in A_*$. Pick a condition $u$ in $\vec{\PPP}_{{<}\kappa}$ such that the canonical condition in $\vec{\PPP}_{{<}\nu_{j[\delta]}}*\dot{\PPP}_{[\nu_{j[\delta]},\kappa)}$ corresponding to $u$ is stronger than $\langle q_{j[\delta]},\dot{r}_{j[\delta]}\rangle$. Let $H$ be $\vec{\PPP}_{{<}\kappa}$-generic over $\VV$ with $u\in H$ and let $H_j$ denote the filter on $\vec{\PPP}_{{<}\nu_{j[\delta]}}$ induced by $H$. Then $\dot{d}_{j[\delta]}^H=\dot{e}_{j[\delta]}^{H_j}\in\VV[H_j]$. Given $\gamma<\delta$, we have $j(\gamma)\in\dot{d}_{j[\delta]}^H$ if and only if there is an $s\in H_j$ with $\goedel{f^{{-}1}(s)}{j(\beta)}\in d_{j[\delta]}$. Since $f\restriction\nu_{j[\delta]}\in M$ with $j(f\restriction\nu_{j[\delta]})=f$, this shows that $j^{{-}1}[\dot{d}_{j[\delta]}^H]$ is equal to the set of all $\gamma<\delta$ such that there is an $s\in H_j$ with $\goedel{(f\restriction\nu_{j[\delta]})^{{-}1}(s)}{\gamma}\in j^{{-}1}[d_{j[\delta]}]$. This shows that $j^{{-}1}[\dot{d}_{j[\delta]}^H]$ is an element of $M[H_j]$. 
 
 Set $N=\HH{\theta}^{\VV[H_j]}$ and let $\map{j_*}{M[H_j]}{\HH{\theta}^{\VV[H]}}$ denote the induced lift of $j$. Then $j_*$ is a small embedding for $\kappa$ in $\VV[H]$ such that $\dot{d}^H,\dot{x}^H,\in\ran{j}$, $\dot{d}_{j_*[\delta]}^H\in M[H_j]$, $M[H_j]\in N\subseteq \HH{\theta}^{\VV[H]}$ and the pair $(N,\HH{\theta}^{\VV[H]})$ satisfies the $\omega_1$-approximation property. Since $\Poti{\delta}{\nu_{j[\delta]}}^\VV\subseteq M$ and $\vec{\PPP}_{{<}\nu_{j[\delta]}}$ satisfies the $\nu_{j[\delta]}$-chain condition in $\VV$, we also know that $\Poti{\delta}{\crit{j_*}}^N\subseteq M[H_j]$. 

 As above, a density argument shows that there is a small embedding $\map{j}{M}{\HH{\theta}^{\VV[G]}}$ for $\kappa$ in $\VV[G]$ witnessing that $\kappa$ is internally AP $\lambda$-ineffable with respect to $\dot{d}^G$ and $\dot{x}^G$ in $\VV[G]$. 
\end{proof}

\section{Open Questions}\label{section:openquestions}

Clearly, our paper suggests the following task.

\begin{question}
  Find small embedding characterizations for large cardinal notions other than those presented in this paper, for example for extender-based large cardinals like strong or Woodin cardinals!
\end{question}

In Section \ref{section:subtleineffable}, we introduced the principles of supersubtle and $\lambda$-superineffable cardinals. While standard arguments showed that supersubtle cardinals are downwards absolute to $\LL$ in Lemma \ref{supersubtleinl}, similar arguments seem not to work for $\kappa$-superineffable cardinals. So we ask the following.

\begin{question}\label{superineffablequestion}
  Are $\kappa$-superineffable cardinals downwards absolute to $\LL$?
\end{question}

 \bibliographystyle{plain}
\bibliography{references}

\end{document}